\newtheorem{theorem}{Theorem}[section]
\newtheorem{lemma}{Lemma}[section]
\newtheorem{assumption}{Assumption}[section]
\newtheorem{remark}{Remark}[section]
\numberwithin{equation}{section}
\def\II{{(D)}}
\def\DD#1#2{\icount=#1
  \ifnum\icount<1
  \,_{ 0}\kern -.1em D^{#2}_{\kern -.1em x}
  \else
  \,_{x}\kern -.2em D^{#2}_1
  \fi
}
\def\DDRI#1#2{\icount=#1
  \ifnum\icount<1
  \,_{-\infty}^{\kern 1em R}\kern -.2em D^{#2}_{\kern -.1em x}
  \else
  \,_{x}^R \kern -.2em D^{#2}_\infty
  \fi
}
\def\DDR#1#2{\icount=#1
  \ifnum\icount<1
 _{0}^{ \kern -.1em R} \kern -.2em D^{#2}_{\kern -.1em x}
  \else
 _{x}^{ \kern -.1em R} \kern -.2em D^{#2}_{\kern -.1em 1}
  \fi
}
\def\DDCI#1#2{\icount=#1
  \ifnum\icount<1
  \,_{-\infty}^{\kern 1em C}  \kern -.2em D^{#2}_{\kern -.1em x}
  \else
  \,_{x}^C \kern -.2em  D^{#2}_\infty
  \fi
}
\def\DDC#1#2{\icount=#1
  \ifnum\icount<1
  \,_{0}^C \kern -.2em  D^{#2}_{\kern -.1em x}
  \else
  \,_{x}^C \kern -.2em D^{#2}_1
  \fi
}
\def\Hd#1{\widetilde H^{#1}(D)}
\def\Hdi#1#2{\icount=#1
  \ifnum\icount<1
  \widetilde H_{L}^{#2}\II
  \else
  \widetilde H_{R}^{#2}\II
  \fi
}
\begin{document}

\title{A Petrov-Galerkin Finite Element Method for Fractional Convection-Diffusion Equations}
\author {Bangti Jin\and Raytcho Lazarov\and Zhi Zhou}
\address {Department of Computer Science, University College London, London, WC1E 2BT, UK
({bangti.jin@gmail.com,b.jin@ucl.ac.uk})}
\address {Department of Mathematics, Texas A\&M University, College Station, TX 77843-3368
(lazarov@math.tamu.edu)}
\address {Department of Applied Physics and Applied Mathematics,
Columbia University, 500 W. 120th Street, New York, NY 10027, USA (\texttt{zhizhou0125@gmail.com})}

\date{}
\maketitle

\begin{abstract}
In this work, we develop variational formulations of Petrov-Galerkin type for one-dimensional fractional boundary value
problems involving either a Riemann-Liouville or Caputo derivative of order $\alpha\in(3/2,
2)$ in the leading term and both convection and potential terms. They arise in the mathematical modeling of
asymmetric super-diffusion processes in heterogeneous media. The well-posedness of the formulations
and sharp regularity pickup of the variational solutions are established. A novel finite element method
is developed, which employs continuous piecewise linear finite elements and ``shifted'' fractional powers
for the trial and test space, respectively. The new approach has a number of distinct
features: It allows deriving optimal error estimates in both $L^2(D)$ and $H^1(D)$ norms; and
on a uniform mesh, the stiffness matrix of the leading term is diagonal and the resulting linear system is well conditioned.
Further, in the Riemann-Liouville case, an enriched FEM is proposed to improve the convergence.
Extensive numerical results are presented to
verify the theoretical analysis and robustness of the numerical scheme.
\end{abstract}

\section{Introduction}\label{sec:intro}
In this work, we consider the following one-dimensional fractional boundary value problem (FBVP)
\begin{equation}\label{eqn:fde}
  \begin{aligned}
    -\DD 0 \alpha u + bu' + qu &= f\quad \mbox{in } D = (0,1),\\
     u(0)=u(1) &= 0,
  \end{aligned}
\end{equation}
where the source term $f$ belongs to $L^2(D)$ or a suitable subspace, and $\DD 0 \alpha u $ denotes either the
left-sided Riemann-Liouville or Caputo fractional derivative of order $\alpha\in(3/2,2)$ defined in \eqref{eqn:RiemannCaputo}
below. The choice $\alpha\in(3/2,2)$ is to ensure the well-posedness of problem \eqref{eqn:fde} in the space $L^2(D)$,
and that the solution $u$ lies in $H_0^1(D)$ (see Section \ref{sec:var} for details) so that the $H^1(D)$
error estimate makes sense. Throughout, unless otherwise stated, we assume a convection
coefficient $b\in W^{1,\infty}(D)$ and a potential coefficient $q\in L^\infty(D)$. For $\alpha=2$, problem
\eqref{eqn:fde} recovers the canonical steady-state convection diffusion equation.

The interest in the model \eqref{eqn:fde} is motivated by anomalous diffusion in heterogeneous media.
Often it is used to describe super-diffusion processes, in which the mean squared variance grows at a rate faster than that in
a Gaussian process for normal diffusion. Microscopically, the fractional derivative describes long-range
interactions among particles and large particle jumps, and the choice
of the one-sided derivative reflects the asymmetry of the transport process
\cite{BensonWheatcraftMeerschaert:2000,delCastillo:2003}. The term $bu'$ describes convection under external flow field,
with a velocity $b$. The model has found successful applications in a number of areas, e.g., magnetized
plasma and subsurface flow.

\subsection{Review on existing studies}

The robust simulation of the model \eqref{eqn:fde} is challenging due to the nonlocality of the
fractional derivative and limited solution regularity. In the time dependent case, the finite difference method
(FDM) is predominant \cite{LynchCarreras:2003,TadjeranMeerschaertScheffler:2006,Sousa:2009,BaeumerKovacs:2015}; see
also \cite{JinLazarovPasciakZhou:2014} for a finite element method (FEM). Often, the stability of the schemes
and their error estimates were derived by assuming a sufficiently smooth solution.
 In this work, we focus on the steady-state model
\eqref{eqn:fde}, and review below the Riemann-Liouville and Caputo derivatives separately.

In the Riemann-Liouville case, Ervin and Roop \cite{ErvinRoop:2006} (see also \cite{FixRoop:2004}) gave a first
variational formulation of \eqref{eqn:fde} on the space $H_0^{\alpha/2}(D)$. The coercivity of the formulation was shown under suitable
conditions on the coefficients $b$ and $q$. However, in the presence of the convection term, for $\alpha\leq 3/2$, the variational
solution generally does not solve the equation in the $L^2(\Omega)$ sense, due to insufficient solution regularity.
A Galerkin FEM was also proposed, and error estimates were provided
by assuming that the solution is smooth, which remains completely open in the general case, and that the adjoint problem
has full regularity pickup, which generally does not hold.

In the absence of the convection term in \eqref{eqn:fde}, it was revisited in \cite{JinLazarovPasciak:2013a}, where sharp
regularity pickup was established for the first time and $H^{\alpha/2}(D)$ and $L^2(D)$ error
estimates, directly expressed in terms of the problem data, were provided for the Galerkin
FEM. However, the $L^2(D)$ error estimates are suboptimal. Wang and Yang \cite{WangYang:2013}
developed a stable Petrov-Galerkin formulation on the space $H^{\alpha-1}_0(D)\times
H_0^1(D)$, with a variable coefficient inside the fractional derivative. It was numerically
realized in \cite{WangYangZhu:2014}, where an $L^2(D)$-error estimate was provided. The problem
in \cite{WangYang:2013,WangYangZhu:2014} does not involve lower order terms, and its extension
to problem \eqref{eqn:fde} seems nontrivial.
In \cite{ZayernouriKarniadakis:2014}, Petrov-Galerkin formulations
for initial value problems for fractional ODEs and PDEs with a Riemann-Liouville derivative in time were studied.
Chen et al \cite{ChenShenWang:2014} proposed a spectral method for FBVPs of general order
without any lower order term, which merits exponentially convergence in
the $L^2(D)$ norm for suitably smooth data. However, the $L^2(D)$ error
estimate remains suboptimal \cite[Remark 5.2]{ChenShenWang:2014}.

One distinct feature of FBVPs with a Riemann-Liouville derivative is that the solution is usually weakly
singular, irrespective of the smoothness of the source term $f$. Thus the standard
 FEM converges slowly. There are several ways to improve the convergence, e.g., singularity
reconstruction \cite{JinZhou:2014} and transformation approach \cite{JinLazarovLuZhou:2015}.

The Caputo case is more delicate, and was scarcely studied. For example, for $\alpha\in(1,3/2]$,
the existence of a solution to problem \eqref{eqn:fde} with $f\in L^2(D)$ is unknown. This
is reminiscent of fractional diffusion with a Caputo derivative of order $\alpha\in (0,1/2)$ in
time \cite{GorenfloLuchkoYamamoto:2014}. The only variational formulation for the Caputo case
was derived in \cite{JinLazarovPasciak:2013a}. The trial space is $H^{\alpha/2}_0(D)$,
but the test space involves a nonlocal constraint. The stability and sharp
regularity pickup were shown, and a Galerkin FEM was proposed,
with optimal $H^{\alpha/2}(D)$ but suboptimal $L^2(D)$ error estimates.
Recently, Stynes and Gracia \cite{StynesGracia:2014} developed a FDM for \eqref{eqn:fde}
with a Caputo derivative and a Robin boundary condition, and derived an $L^\infty(D)$ rate.
See also \cite{ItoJinTakeuchi:2015} for a Legendre tau method, where
a suboptimal $L^2(D)$ error estimate was given for a smooth solution.

\subsection{Our contributions and the organization of the paper}\label{sec:contrib}
In this work, we shall develop proper variational formulations of Petrov-Galerkin type for the model
\eqref{eqn:fde}, and establish their well-posedness and sharp regularity pickup. For the choice
$\alpha\in (3/2,2)$, the variational solution satisfies \eqref{eqn:fde} in the $L^2(D)$ sense. Further, we
develop a novel FEM with continuous piecewise linear finite elements and ``shifted'' fractional
powers for the test and trial space, respectively.

The new FEM has a number of distinct features. First, the choice of the FEM test space allows us to derive
optimal error estimates in both $L^2(D)$ and $H^1(D)$ norms that are directly expressed in terms
of the problem data. In particular, this fills an outstanding gap in the theoretical analysis of FEMs for FBVPs,
for which only suboptimal $L^2(D)$ estimates were known.  Second, on a uniform mesh, the stiffness matrix of
the leading term is diagonal, and the resulting linear system is well conditioned.  To the best of our
knowledge, it is the first FEM with such desirable property.

In the Riemann-Liouville case, we also develop an enriched FEM based on a singularity reconstruction
technique \cite{CaiKim:2001,JinZhou:2014} to improve the convergence, by resolving the solution singularity
directly. We also derive optimal error estimates in both $H^1(D)$ and $L^2(D)$ norms, thereby improving
the results in \cite{JinZhou:2014}.

The rest of the paper is organized as follows. In Section \ref{sec:prelim} we recall preliminaries
on fractional calculus. The variational formulations are developed in Section \ref{sec:var}, where
the well-posedness and sharp regularity pickup are also studied. The new FEM and its implementation
details are described in Section \ref{sec:fem}, and optimal convergence rates are provided. Then we
present an enriched FEM for the Riemann-Liouville derivative in Section \ref{sec:sing}. Finally, in
Section \ref{sec:numer}, the theoretical analysis is numerically verified by extensive experiments, including
nonsmooth data. Throughout the notation $c$, with or without a subscript, denotes a generic constant, which may change at
different occurrences, but it is always independent of the mesh size $h$ and the solution $u$.

\section{Preliminaries on fractional calculus}\label{sec:prelim}
We first briefly recall some preliminary facts on fractional calculus.
For any $\gamma > 0$ and $ f \in L^2(D)$ we define the left-sided Riemann-Liouville
fractional integral $_0\hspace{-0.3mm}I^{\gamma}_x f$ of order $\gamma$ by
\begin{equation}\label{eqn:RL-int}
 ({\,_0\hspace{-0.3mm}I^\gamma_x} f) (x)= \frac 1{\Gamma(\gamma)} \int_0^x (x-t)^{\gamma-1} f(t)dt,
\end{equation}
where $\Gamma(\cdot)$ is the Gamma function defined by $\Gamma(x)=\int_0^\infty t^{x-1}e^{-t}dt$ for $x>0$.
Then, for any $\beta>0$ with $n-1 < \beta < n$, $n\in\mathbb{N}$, the
left-sided Riemann-Liouville and Caputo derivatives of order $\beta$ of $f\in H^n(D)$, denoted by $\DDR 0
\beta f$ and $\DDC0\beta f$, are respectively defined by
\cite{KilbasSrivastavaTrujillo:2006,Podlubny_book}
\begin{equation}\label{eqn:RiemannCaputo}
\DDR0\beta u =\frac {d^n} {d x^n} \big({_0\hspace{-0.3mm}I^{n-\beta}_x} u\big)\quad \mbox{and} \quad
  \DDC 0 \beta u = {_0\hspace{-0.3mm}I^{n-\beta}_x}\big(\frac {d^nu} {dx^n }\big).
\end{equation}
Analogously we define the right-sided Riemann-Liouville integral $_xI_1^\gamma f$ by
\begin{equation*}
  ({_x\hspace{-0.3mm}I^\gamma_1} f) (x)= \frac 1{\Gamma(\gamma)}\int_x^1 (t-x)^{\gamma-1}f(t)\,dt
\end{equation*}
and the right-sided derivatives of order $\beta$ by
\begin{equation*}
  \DDR1\beta u =(-1)^n\frac {d^n} {d x^n} \big({_x\hspace{-0.3mm}I^{n-\beta}_1} u\big)
  \quad \mbox{and} \quad \DDC 1 \beta u = (-1)^n {_x\hspace{-0.3mm}I^{n-\beta}_1}\big(\frac {d^nu} {dx^n }\big).
\end{equation*}
The following formula for change of integration order is valid
\cite[pp. 76, Lemma 2.7]{KilbasSrivastavaTrujillo:2006}
\begin{equation}\label{eqn:change-order}
  ({\,_0\hspace{-0.3mm}I^\gamma_x} \psi,\varphi)=(\psi,{\,_x\hspace{-0.3mm}I^\gamma_1} \varphi)
  \quad\forall\psi,\varphi\in L^2(D),
\end{equation}
where $(\cdot,\cdot)$ denotes the $L^2(D)$ inner product.

Now we introduce some function spaces. For any $\beta\ge 0$, we denote $H^\beta(D)$ to be
the Sobolev space of order $\beta$ on the unit interval $D$ \cite{AdamsFournier:2003}, and $\Hd \beta $ the set of
functions in $H^\beta(D)$ whose extension by zero to $\mathbb{R}$ is in $H^\beta(\mathbb{R})$. Likewise, we define
$\Hdi 0 \beta$ (respectively, $\Hdi 1 \beta$) to be the set of functions $u$ whose extension by zero, denoted by
$\tilde{u}$, is in $H^\beta(-\infty,1)$ (respectively, $H^\beta(0,\infty)$). Further, for $u\in \Hdi 0
\beta$, we set $\|u\|_{\Hdi 0\beta}:=\|\tilde{u}\|_{H^\beta(-\infty,1)}$, and similarly the norm
in $\Hdi 1 \beta$.

The next theorem collects some useful properties of fractional integral and differential operators
(see \cite[pp. 73, Lemma 2.3]{KilbasSrivastavaTrujillo:2006} \cite[Theorems 2.1 and 3.1]{JinLazarovPasciak:2013a}).
\begin{theorem}\label{thm:fracop}
The following statements hold.
\begin{itemize}
  \item[$\mathrm{(a)}$] The integral operators $_0I_x^\beta$ and $_xI_1^\beta$ satisfy the semigroup property.
  \item[$\mathrm{(b)}$] The operators $\DDR0\beta$ and $\DDR1\beta $ extend continuously to bounded operators
     from $\Hdi 0 \beta$ and $\Hdi 1\beta$, respectively, to $L^2(D)$.
  \item[$\mathrm{(c)}$] For any $s,\beta\geq 0$, the operator
$_0I_x^\beta$ is bounded from $\Hdi0 s$ to $\Hdi0{\beta+s}$,
and $_xI_1^\beta$ is bounded from $\Hdi1 s$ to $\Hdi1{\beta+s}$.
\end{itemize}
\end{theorem}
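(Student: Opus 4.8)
The plan is to establish (a) by a direct computation, to obtain (c) by transferring ${}_0I_x^{\beta}$ to the whole line $\RR$ and using the Fourier transform, and then to deduce (b) from (c). I treat only the left-sided operators; the right-sided statements follow from the substitution $x\mapsto 1-x$, an isometry that exchanges $H^{r}(-\infty,1)$ with $H^{r}(0,\infty)$, hence $\Hdi 0 r$ with $\Hdi 1 r$, and ${}_0I_x^{\gamma}$ with ${}_xI_1^{\gamma}$. For (a), fix $f\in L^{2}(D)\subset L^{1}(D)$, substitute the definition \eqref{eqn:RL-int} of ${}_0I_x^{\gamma}f$ into that of ${}_0I_x^{\beta}({}_0I_x^{\gamma}f)$, and interchange the order of integration (legitimate since running the same computation with $|f|$ in place of $f$ gives a finite repeated integral for a.e.\ $x$). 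After the change of variable $\tau=t+(x-t)\sigma$ the inner integral $\int_{t}^{x}(x-\tau)^{\beta-1}(\tau-t)^{\gamma-1}\,d\tau$ equals $B(\beta,\gamma)(x-t)^{\beta+\gamma-1}$; since $B(\beta,\gamma)=\Gamma(\beta)\Gamma(\gamma)/\Gamma(\beta+\gamma)$ the Gamma prefactors cancel and one is left with ${}_0I_x^{\beta}({}_0I_x^{\gamma}f)={}_0I_x^{\beta+\gamma}f$.

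For (c), observe first that ${}_0I_x^{\beta}$ is already well defined on $L^{2}(D)\supset\Hdi 0 s$ by \eqref{eqn:RL-int}, so the assertion is purely a mapping property. The core case is $\beta\in(0,1)$. Given $u\in\Hdi 0 s$, extend its zero-extension $\widetilde u\in H^{s}(-\infty,1)$ to some $U\in H^{s}(\RR)$ with $\operatorname{supp}U\subset[0,2]$ and $\|U\|_{H^{s}(\RR)}\le c\,\|u\|_{\Hdi 0 s}$; this is possible via a standard extension operator on the half-line-type domain $(-\infty,1)$ followed by multiplication by a smooth cutoff equal to $1$ on $(-\infty,3/2]$, and one keeps $\operatorname{supp}U\subset[0,\infty)$ because $\widetilde u$ vanishes on $(-\infty,0)$. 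Put $k_{\beta}(r):=\Gamma(\beta)^{-1}r^{\beta-1}\mathbf 1_{(0,1)}(r)\in L^{1}(\RR)$; a short inspection of supports shows $(k_{\beta}\ast U)(x)={}_0I_x^{\beta}u(x)$ for $x\in(0,1)$ and $(k_{\beta}\ast U)(x)=0$ for $x\le 0$, so the restriction of $k_{\beta}\ast U$ to $(-\infty,1)$ is exactly the zero-extension of $({}_0I_x^{\beta}u)|_{D}$, and hence $\|{}_0I_x^{\beta}u\|_{\Hdi 0{s+\beta}}\le\|k_{\beta}\ast U\|_{H^{s+\beta}(\RR)}$. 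For the symbol, the substitution $v=\xi r$ gives $\widehat{k_{\beta}}(\xi)=\Gamma(\beta)^{-1}\xi^{-\beta}\int_{0}^{\xi}v^{\beta-1}e^{-iv}\,dv$ for $\xi>0$; since the incomplete integral is $O(\xi^{\beta})$ as $\xi\to0^{+}$ and bounded uniformly in $\xi$ (it converges as $\xi\to\infty$), and $\widehat{k_{\beta}}(-\xi)=\overline{\widehat{k_{\beta}}(\xi)}$, one gets $|\widehat{k_{\beta}}(\xi)|\le c\,(1+|\xi|)^{-\beta}$. Therefore, using $(1+|\xi|)^{-2\beta}\le(1+\xi^{2})^{-\beta}$,
\[\|k_{\beta}\ast U\|_{H^{s+\beta}(\RR)}^{2}\le c\!\int_{\RR}(1+\xi^{2})^{s+\beta}|\widehat{k_{\beta}}(\xi)|^{2}|\widehat U(\xi)|^{2}\,d\xi\le c\!\int_{\RR}(1+\xi^{2})^{s}|\widehat U(\xi)|^{2}\,d\xi=c\,\|U\|_{H^{s}(\RR)}^{2},\]
and taking the infimum over all extensions $U$ of $\widetilde u$ yields $\|{}_0I_x^{\beta}u\|_{\Hdi 0{s+\beta}}\le c\,\|u\|_{\Hdi 0 s}$.

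For a general exponent $\beta>0$, write $\beta=m+\theta$ with $m=\lfloor\beta\rfloor$ and $\theta\in[0,1)$, and factor ${}_0I_x^{\beta}=({}_0I_x^{1})^{m}\circ{}_0I_x^{\theta}$ using the semigroup property (a): the factor ${}_0I_x^{\theta}$ is handled by the case above (or is the identity if $\theta=0$), while ordinary integration ${}_0I_x^{1}$ maps $\Hdi 0 r$ boundedly into $\Hdi 0{r+1}$ since the zero-extension of $x\mapsto\int_{0}^{x}v(t)\,dt$ has distributional derivative on $(-\infty,1)$ equal to the zero-extension of $v$ and acquires no jump at $x=0$; composing the bounds proves (c). Finally, for (b): with $n-1<\beta<n$ and the definition \eqref{eqn:RiemannCaputo}, $\DDR 0\beta u=\frac{d^{n}}{dx^{n}}({}_0I_x^{n-\beta}u)$; since $n-\beta\in(0,1)$, part (c) with exponent $n-\beta$ and smoothness index $\beta$ shows $u\mapsto{}_0I_x^{n-\beta}u$ is bounded from $\Hdi 0\beta$ into $\Hdi 0 n$, and $\frac{d^{n}}{dx^{n}}$ is trivially bounded from $\Hdi 0 n$ into $L^{2}(D)$, so $\|\DDR 0\beta u\|_{L^{2}(D)}\le c\,\|u\|_{\Hdi 0\beta}$ on the subspace $H^{n}(D)\cap\Hdi 0\beta$ where $\DDR 0\beta$ is classically defined; as that subspace is dense in $\Hdi 0\beta$ (mollify after a small rightward translation, keeping supports away from $0$), $\DDR 0\beta$ extends continuously, as claimed.

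The step I expect to be the main obstacle is the Fourier argument in (c): one must get around both the non-integrability at $-\infty$ of the Riemann--Liouville kernel $r_{+}^{\beta-1}$ and the singularity at $\xi=0$ of the associated Fourier multiplier $(i\xi)^{-\beta}$, which is precisely why I replace that kernel by one truncated to a bounded interval. The cost of truncation is that $\widehat{k_{\beta}}$ decays only like $(1+|\xi|)^{-\min(\beta,1)}$, so exponents $\beta\ge 1$ are reached not in one shot but by peeling off ordinary integrations via (a); carrying out this bookkeeping, together with the routine density argument used in (b), is the only real subtlety.
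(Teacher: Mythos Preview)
The paper does not give its own proof of this theorem: it is stated with citations to \cite[Lemma~2.3]{KilbasSrivastavaTrujillo:2006} for (a) and to \cite[Theorems~2.1 and~3.1]{JinLazarovPasciak:2013a} for (b) and (c). Your self-contained argument is correct and is in the spirit of those references---the Beta-function identity for (a), a transfer to the full line and a Fourier-multiplier bound for (c), and (b) read off from (c). Two minor remarks. First, the closing density step in (b) is not actually needed: once (c) is available, the defining formula $\DDR 0\beta u=\tfrac{d^{n}}{dx^{n}}\big({_0I_x^{n-\beta}}u\big)$ already makes sense for every $u\in\Hdi0\beta$, since ${_0I_x^{n-\beta}}u\in\Hdi0 n\subset H^{n}(D)$, and this composition is the bounded extension. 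Second, your remark that ``taking the infimum over all extensions $U$'' recovers the $\Hdi0 s$-norm is superfluous, since you have already built one extension with $\|U\|_{H^{s}(\RR)}\le c\,\|u\|_{\Hdi0 s}$; the bound follows directly from that single $U$.
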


We shall also need the following two results. The first asserts the equivalence of the two fractional
derivatives on suitable function spaces, and the second gives an algebraic property of fractional-order Sobolev spaces.
\begin{lemma}[\cite{JinLazarovPasciak:2013a}, Lemma 4.1]
\label{l:caprl}
For $u\in \Hdi 0 1$  and $\beta\in (0,1)$,
$\DDR 0\beta u = {_0I_x^{1-\beta}}(u^\prime)$.
Similarly, for $u\in \Hdi 1 1$  and $\beta\in (0,1)$,
$\DDR1\beta u =-{_x I_1^{1-\beta}}(u^\prime)$.
\end{lemma}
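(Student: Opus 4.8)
The plan is to reduce the claimed identity to the semigroup property of the Riemann--Liouville integral operator together with the basic relation between differentiation and integration. First I would treat the left-sided case; the right-sided one follows by the obvious reflection $x\mapsto 1-x$, which interchanges $_0I_x^\gamma$ and $_xI_1^\gamma$ and produces the sign $(-1)$ on the first derivative. By definition \eqref{eqn:RiemannCaputo} with $n=1$, for $\beta\in(0,1)$ we have $\DDR0\beta u = \frac{d}{dx}\big({_0I_x^{1-\beta}}u\big)$, so the goal is to show $\frac{d}{dx}\big({_0I_x^{1-\beta}}u\big) = {_0I_x^{1-\beta}}(u')$ for $u\in\Hdi01$.

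The key step is to write $u$ as a fractional integral of its derivative: for $u\in\Hdi01$ (so that $u(0)=0$ and $u'\in L^2(D)$) one has $u = {_0I_x^1}(u')$, i.e. $u(x)=\int_0^x u'(t)\,dt$. This is just the fundamental theorem of calculus, valid because the zero extension of $u$ lies in $H^1(\mathbb{R})$ and hence $u(0)=0$. Substituting this into ${_0I_x^{1-\beta}}u$ and invoking the semigroup property from Theorem \ref{thm:fracop}(a), ${_0I_x^{1-\beta}}\,{_0I_x^1} = {_0I_x^{2-\beta}}$, gives ${_0I_x^{1-\beta}}u = {_0I_x^{2-\beta}}(u')$. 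Now differentiate: since $2-\beta\in(1,2)$ and $u'\in L^2(D)=\Hdi00$, Theorem \ref{thm:fracop}(c) shows ${_0I_x^{2-\beta}}(u')\in\Hdi0{2-\beta}\subset H^1(D)$, so differentiation under the integral sign is justified and $\frac{d}{dx}\,{_0I_x^{2-\beta}}(u') = {_0I_x^{1-\beta}}(u')$. Combining the three displays yields $\DDR0\beta u = \frac{d}{dx}\big({_0I_x^{1-\beta}}u\big) = \frac{d}{dx}\,{_0I_x^{2-\beta}}(u') = {_0I_x^{1-\beta}}(u')$, as claimed. The mapping property in Theorem \ref{thm:fracop}(b) also confirms that both sides are well-defined elements of $L^2(D)$.

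The main obstacle, such as it is, lies in justifying the interchange of $\frac{d}{dx}$ with the fractional integral, i.e. the identity $\frac{d}{dx}\,{_0I_x^{\gamma}}g = {_0I_x^{\gamma-1}}g$ for $\gamma\in(1,2)$ and $g\in L^2(D)$; this is where the regularity bookkeeping matters, and I would lean on Theorem \ref{thm:fracop}(c) to place ${_0I_x^{\gamma}}g$ in a Sobolev space of order greater than one before differentiating, rather than attempting a bare-hands computation with the singular kernel. Everything else is a routine application of the semigroup law and the fundamental theorem of calculus; the right-sided statement is then immediate by the reflection argument noted above.
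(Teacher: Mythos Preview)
The paper does not actually prove this lemma; it is quoted verbatim from \cite{JinLazarovPasciak:2013a} and no argument is given here. Your argument is correct and is the standard one: write $u={_0I_x^1}(u')$ from $u(0)=0$, apply the semigroup property to get ${_0I_x^{1-\beta}}u={_0I_x^{2-\beta}}(u')$, and differentiate. One small point: the phrase ``differentiation under the integral sign is justified'' is not quite what you are using, and Theorem~\ref{thm:fracop}(c) alone only tells you the function is in $H^1$, not what its derivative is. The clean way to get $\frac{d}{dx}\,{_0I_x^{2-\beta}}(u')={_0I_x^{1-\beta}}(u')$ is to invoke the semigroup law once more, ${_0I_x^{2-\beta}}={_0I_x^{1}}\,{_0I_x^{1-\beta}}$, and then use $\frac{d}{dx}\,{_0I_x^{1}}g=g$ a.e.\ for $g\in L^1(D)$, which is just the Lebesgue differentiation (or fundamental) theorem. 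With that tweak the argument is complete and matches what one finds in the cited reference.
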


\begin{lemma}[\cite{JinLazarovPasciak:2013a}, Lemma 4.6]\label{lem:Hsalg}
Let $0<s\leq1, s\neq 1/2$. Then for any $u\in\Hd s\cap L^\infty(D)$
and $v\in H^s(D)\cap L^\infty(D)$, the product $uv$ is in $\Hd s$.
\end{lemma}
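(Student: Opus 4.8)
The plan is to prove Lemma \ref{lem:Hsalg}, the algebraic property $u\in\Hd s\cap L^\infty(D)$, $v\in H^s(D)\cap L^\infty(D)$ $\Rightarrow$ $uv\in\Hd s$, by reducing it to a statement on the whole line $\mathbb{R}$ via the zero-extension. Writing $\tilde u$ for the extension of $u$ by zero, we have $\tilde u\in H^s(\mathbb{R})$ by definition of $\Hd s$, and $\widetilde{uv}=\tilde u\,v$ on $D$ with $\widetilde{uv}=0$ off $D$, so it suffices to show $\tilde u\,\bar v\in H^s(\mathbb{R})$ for any bounded extension $\bar v\in H^s(\mathbb{R})$ of $v$ (such a $\bar v$ exists because $D$ is a bounded Lipschitz domain, and one may take it compactly supported and in $L^\infty$). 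Then restriction back to $D$ is bounded, giving $uv\in\widetilde H^s(D)$.

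First I would recall the Gagliardo–Slobodeckij seminorm characterization of $H^s(\mathbb{R})$ for $0<s<1$,
\begin{equation*}
  |w|_{H^s(\mathbb{R})}^2 = \int_{\mathbb{R}}\int_{\mathbb{R}} \frac{|w(x)-w(y)|^2}{|x-y|^{1+2s}}\,dx\,dy,
\end{equation*}
and use the elementary pointwise bound
\begin{equation*}
  |(\tilde u\bar v)(x)-(\tilde u\bar v)(y)| \le |\tilde u(x)|\,|\bar v(x)-\bar v(y)| + |\bar v(y)|\,|\tilde u(x)-\tilde u(y)|.
\end{equation*}
Squaring, applying the inequality $(a+b)^2\le 2a^2+2b^2$, dividing by $|x-y|^{1+2s}$ and integrating, the two resulting terms are each bounded by $2\|\tilde u\|_{L^\infty}^2|\bar v|_{H^s}^2$ and $2\|\bar v\|_{L^\infty}^2|\tilde u|_{H^s}^2$ respectively. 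Together with the trivial $L^2$ bound $\|\tilde u\bar v\|_{L^2}\le\|\bar v\|_{L^\infty}\|\tilde u\|_{L^2}$, this shows $\tilde u\bar v\in H^s(\mathbb{R})$ with the desired product estimate, and restricting to $D$ finishes the proof for $s<1$. The endpoint cases $s=1$ (where $\Hd 1=H_0^1(D)$ and the statement is the classical product rule $\|(uv)'\|_{L^2}\le\|u\|_{L^\infty}\|v'\|_{L^2}+\|v\|_{L^\infty}\|u'\|_{L^2}$ for $u\in H_0^1$, which clearly preserves the zero trace) and the harmless omission of $s=1/2$ are handled separately.

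The main obstacle is the subtlety hidden in the zero-extension: one must be sure that multiplying $\tilde u$ (which already vanishes outside $D$) by an \emph{arbitrary} $H^s$ extension $\bar v$ of $v$ produces a function equal to $\widetilde{uv}$ a.e. and still lying in $H^s(\mathbb{R})$ — and crucially that the restriction $\widetilde{uv}|_{\mathbb{R}\setminus D}=0$, so that the product genuinely lands in $\Hd s$ rather than merely in $H^s(D)$. This is automatic from $\tilde u=0$ off $D$, but it is the step where the hypothesis "$u\in\Hd s$" (not merely $u\in H^s(D)$) is essential; without it the extension of $uv$ by zero need not be in $H^s(\mathbb{R})$. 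The restriction $s\ne 1/2$ enters only because $\Hd{1/2}\ne H^{1/2}_{00}$-type spaces behave exceptionally (the zero extension is not bounded on $H^{1/2}(D)$), so the reduction above is valid precisely for $s\in(0,1)\setminus\{1/2\}$ and for $s=1$.
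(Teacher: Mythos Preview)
The paper does not provide its own proof of this lemma; it is quoted verbatim from \cite{JinLazarovPasciak:2013a}, Lemma~4.6, so there is no in-paper argument to compare against.

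Your argument is correct and is the standard route: reduce to a product estimate on $\mathbb{R}$ via the zero-extension $\tilde u\in H^s(\mathbb{R})$ and an $H^s\cap L^\infty$ extension $\bar v$ of $v$, then control the Gagliardo--Slobodeckij seminorm of $\tilde u\bar v$ by the pointwise splitting you wrote; since $\tilde u$ vanishes off $D$ one has $\tilde u\bar v=\widetilde{uv}$ a.e., which is exactly the membership $uv\in\Hd s$. Two minor remarks. First, the existence of an extension $\bar v\in H^s(\mathbb{R})\cap L^\infty(\mathbb{R})$ deserves a one-line justification; on $D=(0,1)$ the even reflection about each endpoint followed by a smooth cutoff does the job for every $0<s\le 1$. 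Second, your explanation of the exclusion $s\ne 1/2$ is slightly misdirected: nothing in your Gagliardo-seminorm estimate is special at $s=1/2$, and in fact your proof goes through there as well. The restriction in the stated lemma is inherited from the cited source rather than forced by this line of argument. Finally, the sentence ``restriction back to $D$ is bounded'' is superfluous---once $\tilde u\bar v=\widetilde{uv}\in H^s(\mathbb{R})$, the conclusion $uv\in\Hd s$ is immediate from the definition of $\Hd s$.
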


\section{Variational formulation and regularity}\label{sec:var}
Now we develop proper variational formulations for
problem \eqref{eqn:fde}, and establish their stability and sharp regularity pickup.
We shall discuss the Riemann-Liouville and Caputo cases separately.
\subsection{Variational formulation in the Riemann-Liouville case}\label{subsec:var-RL}
First we consider the case $b,q\equiv0$. Then it was
shown in \cite[Section 3]{JinLazarovPasciak:2013a} that for $f \in L^2(D)$,
the solution $u$ of \eqref{eqn:fde} is given by
\begin{equation}\label{eqn:solrep-RL}
 u = -{_0\hspace{-0.3mm}I^{\alpha}_x} f
     + ({_0\hspace{-0.3mm}I^{\alpha}_x}f) (1)x^{\alpha-1} \in \Hdi0 {\alpha-1+\beta},
\end{equation}
for any $\beta\in[2-\alpha,1/2)$. Thus for $\alpha\in(3/2,2)$, $u\in \Hd 1$.
Further, for $\varphi\in C_0^\infty(D)$, by
\eqref{eqn:change-order} and Lemma \ref{l:caprl}, there holds
\begin{equation*}
\begin{split}
 (\DDR0 \alpha u, \varphi)&= (({_0\hspace{-0.3mm}I_x^{2-\alpha}}u)'',\varphi)=-(({_0\hspace{-0.3mm}I_x^{2-\alpha}}u)',\varphi')\\
&=-({_0\hspace{-0.3mm}I_x^{2-\alpha}}u',\varphi')=-(u',{_x\hspace{-0.3mm}I_1^{2-\alpha}}\varphi')=(u',~\DDR1{\alpha-1} \varphi).
\end{split}
\end{equation*}
This motivates us to define a bilinear form $a(\cdot,\cdot):\Hd1\times\Hd{\alpha-1}\rightarrow\mathbb{R}$ by
\begin{equation}\label{eqn:a}
  a(u,\varphi):=-(u',~\DDR1{\alpha-1} \varphi).
\end{equation}
Throughout, we set $U=\Hd 1$ and $V=\Hd {\alpha-1}$ below, and denote by $U^*$
etc. the dual space of $U$ etc., and the norms on $U$ etc. by $\|\cdot\|_U$ etc. Further,
we also denote the duality pairing by $\langle\cdot,\cdot\rangle$.

Now we state our first result on the stability of the variational formulation.
\begin{lemma}\label{lem:inf-sup-a}
The bilinear form $a(\cdot,\cdot)$ in \eqref{eqn:a} satisfies the inf-sup condition:
\begin{equation}\label{eqn:inf-sup1}
 \sup_{\varphi\in V} \frac{a(u,\varphi)}{\| \varphi \|_V} \ge c_0 \| u \|_U.
\end{equation}
\end{lemma}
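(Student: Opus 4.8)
The plan is to establish the inf-sup condition \eqref{eqn:inf-sup1} by exhibiting, for each $u\in U=\Hd 1$, an explicit test function $\varphi_u\in V=\Hd{\alpha-1}$ for which $a(u,\varphi_u)$ is bounded below by $\|u\|_U\|\varphi_u\|_V$ up to a constant. Recalling the definition $a(u,\varphi)=-(u',{\,}\DDR1{\alpha-1}\varphi)$, the natural candidate is to choose $\varphi_u$ so that $\DDR1{\alpha-1}\varphi_u=-u'$, i.e. to invert the right-sided fractional derivative of order $\alpha-1\in(1/2,1)$. By Lemma \ref{l:caprl}, for $\varphi\in\Hdi1 1$ one has $\DDR1{\alpha-1}\varphi=-{_xI_1^{2-\alpha}}(\varphi')$, so the equation $\DDR1{\alpha-1}\varphi_u=-u'$ should be solved by applying a right-sided fractional integral to $u'$. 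Concretely I would set $\varphi_u={_xI_1^{2-\alpha}}u$ (or a closely related primitive), and then verify using the semigroup property in Theorem \ref{thm:fracop}(a) together with Lemma \ref{l:caprl} that $\DDR1{\alpha-1}\varphi_u=-u'$; this uses $u\in\Hd 1$ so that $u'\in L^2(D)$ with zero boundary traces in the appropriate one-sided sense.

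With this choice, $a(u,\varphi_u)=-(u',\DDR1{\alpha-1}\varphi_u)=(u',u')=\|u'\|_{L^2(D)}^2$, which by the Poincaré inequality on $\Hd 1$ is comparable to $\|u\|_U^2$. It then remains to bound $\|\varphi_u\|_V=\|{_xI_1^{2-\alpha}}u\|_{\Hd{\alpha-1}}$ from above by $c\|u\|_U$. This is exactly the mapping property in Theorem \ref{thm:fracop}(c): with $s=1$ and $\beta=2-\alpha$, the operator $_xI_1^{2-\alpha}$ is bounded from $\Hdi1 1$ into $\Hdi1{3-\alpha}$, and since $3-\alpha>\alpha-1$ for $\alpha<2$, together with the embedding $\Hdi1 1=\Hd1\hookrightarrow\Hd1$, one obtains $\|\varphi_u\|_{\Hd{\alpha-1}}\le c\|u\|_{\Hd 1}$. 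Dividing $a(u,\varphi_u)\gtrsim\|u\|_U^2$ by $\|\varphi_u\|_V\le c\|u\|_U$ then yields $\sup_{\varphi\in V}a(u,\varphi)/\|\varphi\|_V\ge a(u,\varphi_u)/\|\varphi_u\|_V\ge c_0\|u\|_U$, as desired.

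The delicate point — and the step I expect to require the most care — is the identity $\DDR1{\alpha-1}\varphi_u=-u'$ at the level of function spaces, i.e. checking that $\varphi_u$ genuinely lies in $V=\Hd{\alpha-1}$ and that the boundary-term-free integration by parts underlying \eqref{eqn:a} is legitimate for this $\varphi_u$ rather than merely for $\varphi\in C_0^\infty(D)$. One must confirm that the zero extension of $\varphi_u$ has the claimed Sobolev regularity (no spurious jump at $x=1$), which is where the one-sided space $\Hdi1{\cdot}$ and Theorem \ref{thm:fracop}(c) are essential, and that the semigroup composition ${_xI_1^{2-\alpha}}$ followed by $\DDR1{\alpha-1}$ collapses correctly — this is precisely a right-sided analogue of the computation displayed before \eqref{eqn:a}. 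Once these identities are secured on the dense class and extended by continuity using Theorem \ref{thm:fracop}(b), the inf-sup estimate follows from the two displayed inequalities above.
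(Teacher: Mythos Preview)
Your overall strategy---exhibit an explicit $\varphi_u$ with $\DDR1{\alpha-1}\varphi_u=-u'$, so that $a(u,\varphi_u)=\|u'\|_{L^2(D)}^2$---is exactly the paper's, but there are two concrete problems.

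First, the formula $\varphi_u={_xI_1^{2-\alpha}}u$ is not the one your own reasoning dictates. Applying ``a right-sided fractional integral to $u'$'' of the correct order gives $-{_xI_1^{\alpha-1}}u'=\DDR1{2-\alpha}u$ (Lemma~\ref{l:caprl}); by contrast, $\DDR1{\alpha-1}\bigl({_xI_1^{2-\alpha}}u\bigr)=-\tfrac{d}{dx}\,{_xI_1^{4-2\alpha}}u\neq -u'$ for $\alpha\neq 3/2$. So the displayed candidate does not produce $a(u,\varphi_u)=\|u'\|^2$.

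Second, and more seriously, even with the correct choice $-{_xI_1^{\alpha-1}}u'$ the function is \emph{not} in $V=\Hd{\alpha-1}$. It vanishes at $x=1$ but its value at $x=0$ is $-\tfrac{1}{\Gamma(\alpha-1)}\int_0^1 t^{\alpha-2}u'(t)\,dt$, which is generically nonzero; since $\alpha-1>1/2$, membership in $\Hd{\alpha-1}$ forces vanishing at both endpoints. Your discussion of the ``delicate point'' focuses on a possible jump at $x=1$, but the actual obstruction is at $x=0$. The paper's fix is to subtract the boundary corrector $(\DDR1{2-\alpha}u)(0)\,(1-x)^{\alpha-1}$: this makes $\varphi_u(0)=0$, keeps $\varphi_u\in V$ because $(1-x)^{\alpha-1}\in\Hdi1{\alpha-1+\beta}$, and---the key observation---does not change $a(u,\varphi_u)$ since $\DDR1{\alpha-1}(1-x)^{\alpha-1}$ is a constant and $(u',1)=u(1)-u(0)=0$. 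Without this correction (or an equivalent device) the argument does not go through.
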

\begin{proof}
For any fixed $u\in U$, let $\varphi_u ={\DDR1 {2-\alpha}u}-(\DDR1 {2-\alpha}u)(0)(1-x)^{\alpha-1}$.
Clearly, $\varphi_u(0)=\varphi_u(1)=0$. For $\beta\in[2-\alpha,1/2)$, the term $(1-x)^{\alpha-1} \in
\Hdi 1{\alpha-1+\beta}$. Meanwhile, by Lemma \ref{l:caprl} and Theorem \ref{thm:fracop}(c), for $u\in U$, there holds
\begin{equation*}
  \|  \DDR1 {2-\alpha} u \|_{H^{\alpha-1}(D)} = \| {_x\hspace{-0.3mm}I_1^{\alpha-1}}u' \|_{H^{\alpha-1}(D)} \le c \| u\|_U,
\end{equation*}
and thus $\varphi_u \in V$ and is a valid test function. Further,
\begin{equation*}
\begin{split}
\| \varphi_u  \|_{V} & \le  c \left(\|  \DDR1 {2-\alpha} u \|_{H^{\alpha-1}(D)}+ c|\DDR1 {2-\alpha}u(0)| \|(1-x)^{\alpha-1}\|_{H^{\alpha-1}(D)}\right)\\
&\le  c\left( \| u \|_U + | ({_x\hspace{-0.3mm}I_1^{\alpha-1}}u')(0) | \right) \le c \|  u\|_U.
\end{split}
\end{equation*}
Since $u(0)=u(1)=0$, $(u',~\DDR1{\alpha-1}(1-x)^{\alpha-1})=c_\alpha(u',1)=0$, and
we derive the following inf-sup condition:
\begin{equation*}
  \begin{aligned}
    \sup_{\varphi\in V} & \frac{a(u,\varphi)}{\| \varphi \|_V}
        \ge \frac{-(u',\,{\DDR 1 {\alpha-1}} \varphi_u)}{\|\varphi_u\|_V}
    \geq c_0\frac{-(u',~\DDR1{\alpha-1}(-{_x\hspace{-0.3mm}I_1^{\alpha-1}}u'))}{\| u\|_U} = c_0 \| u\|_U.
  \end{aligned}
\end{equation*}
\end{proof}

For any nonzero $\varphi \in V$, we choose $u_\varphi = {_x\hspace{-0.3mm}I_1^{2-\alpha}}\varphi -
({_x\hspace{-0.3mm}I_1^{2-\alpha}}\varphi) (0)(1-x)$, which is nonzero and belongs to $U$. Then
\begin{equation*}
   a(u_\varphi,\varphi)=-(u_\varphi',~\DDR1 {\alpha-1}\varphi) = \| u_\varphi \|_U^2 >0.
\end{equation*}
It implies that if $a(u,\varphi)=0$ for all $u\in U$, then $\varphi=0$. This and Lemma
\ref{lem:inf-sup-a} give the stability of the variational problem for the case $b,
q \equiv 0$. Namely, given any $F\in V^*$, there exists a unique solution $u\in U$ such that
\begin{equation*}
  a(u,\varphi) = \langle F,\varphi\rangle \quad \forall \varphi\in V.
\end{equation*}

We now turn to the general case $b, q\not \equiv 0$. The corresponding variational formulation
reads: given any $F\in V^*$, find $u\in U$ such that
\begin{equation}\label{eqn:var-RL}
  A(u,\varphi) = \langle F,\varphi\rangle\quad \forall\varphi\in V,
\end{equation}
where the bilinear form $A(\cdot,\cdot):U\times V\to \mathbb{R}$ is defined by
\begin{equation*}
  A(u,\varphi)=  a(u,\varphi)+ (bu',\varphi) + (qu,\varphi).
\end{equation*}
To study the bilinear form $A(\cdot,\cdot)$, we make the following uniqueness assumption.
\begin{assumption} \label{ass:riem}
Let the bilinear form $A(\cdot,\cdot):U\times V\to\mathbb{R}$ satisfy
\begin{itemize}
 \item[{$\mathrm{(a)}$}]  The problem of finding $u \in U$ such that $A(u,\varphi)=0$ for all $\varphi \in V$
           has only the trivial solution $u\equiv 0$.
 \item[{$(\mathrm{a}^\ast)$}] The problem of finding $\varphi \in V$ such that $A(u,\varphi)=0$ for all $u \in U$
    has only the trivial solution $\varphi \equiv 0$.
\end{itemize}
\end{assumption}

\begin{theorem}\label{thm:wpreg}
Let $b, q\in L^\infty(D)$ and Assumption \ref{ass:riem} hold. Then for any $F\in V^*$,
there exists a unique solution $u\in U$ to problem \eqref{eqn:var-RL}.
\end{theorem}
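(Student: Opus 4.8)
The plan is to write \eqref{eqn:var-RL} as a compact perturbation of the principal‑part problem, which is already known to be well posed, and then to apply the Fredholm alternative; Assumption~\ref{ass:riem} will enter only through the injectivity needed to promote an operator of index zero to an invertible one.

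First I would package the principal part as an operator. By Lemma~\ref{lem:inf-sup-a} the form $a(\cdot,\cdot)$ satisfies $\sup_{\varphi\in V}a(u,\varphi)/\|\varphi\|_V\ge c_0\|u\|_U$, so the induced map $\mathcal A_0\colon U\to V^*$, $\langle\mathcal A_0 u,\varphi\rangle=a(u,\varphi)$, is injective with closed range and obeys $\|\mathcal A_0 u\|_{V^*}\ge c_0\|u\|_U$; the transpose inf–sup estimate proved immediately after Lemma~\ref{lem:inf-sup-a} (via the test function $u_\varphi$) shows that the range of $\mathcal A_0$ is dense as well, hence $\mathcal A_0$ is an isomorphism with $\|\mathcal A_0^{-1}\|\le c_0^{-1}$ — this is precisely the well-posedness already recorded for $b,q\equiv 0$. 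Introducing the lower-order operator $\mathcal K\colon U\to V^*$, $\langle\mathcal K u,\varphi\rangle=(bu',\varphi)+(qu,\varphi)$, problem \eqref{eqn:var-RL} becomes $(I+\mathcal A_0^{-1}\mathcal K)u=\mathcal A_0^{-1}F$ in $U$.

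The heart of the argument is to show that $\mathcal A_0^{-1}\mathcal K\colon U\to U$ is compact. Since $b,q\in L^\infty(D)$ and $U=\widetilde H^1(D)$, the map $u\mapsto bu'+qu$ is bounded from $U$ into $L^2(D)$, with constant $\|b\|_{L^\infty(D)}+\|q\|_{L^\infty(D)}$. Because $\alpha-1>0$ (here only this trivial fact about the range $\alpha\in(3/2,2)$ is used), the embedding $V=\widetilde H^{\alpha-1}(D)\hookrightarrow L^2(D)$ is compact and dense, so its adjoint $L^2(D)\hookrightarrow V^*$ is compact. Thus $\mathcal K$ factors as a bounded map $U\to L^2(D)$ followed by the compact embedding $L^2(D)\hookrightarrow V^*$, and is therefore compact; composing with the bounded operator $\mathcal A_0^{-1}$ preserves compactness.

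Finally, $I+\mathcal A_0^{-1}\mathcal K$ is a compact perturbation of the identity on the Hilbert space $U$, so by the Fredholm alternative it is boundedly invertible as soon as it is injective. Its kernel consists of the $u\in U$ with $A(u,\varphi)=0$ for all $\varphi\in V$, which by Assumption~\ref{ass:riem}(a) is trivial; hence \eqref{eqn:var-RL} has a unique solution for every $F\in V^*$, together with the a priori bound $\|u\|_U\le c\|F\|_{V^*}$. (Assumption~\ref{ass:riem}($\mathrm{a}^\ast$) is actually redundant for this particular statement in view of the index-zero structure; it is recorded for the later derivation of the full inf–sup stability of $A(\cdot,\cdot)$.) I expect the compactness of $\mathcal K$ to be the only genuine point to verify — namely recognizing that the convection and potential terms land in $L^2(D)$ and that $L^2(D)$ embeds compactly into $V^*$ precisely because $\alpha>1$; everything else is routine operator theory.
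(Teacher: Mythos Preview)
Your argument is correct and rests on the same compact-perturbation idea as the paper, but the packaging differs. The paper defines $S,T:U\to V^*$ by $\langle Su,\varphi\rangle=A(u,\varphi)$ and $\langle Tu,\varphi\rangle=-(bu',\varphi)-(qu,\varphi)$, notes that $T$ is compact via the same embedding $L^2(D)\hookrightarrow V^*$ you use, and combines Assumption~\ref{ass:riem}(a) with the bound $\|u\|_U\le c(\|Su\|_{V^*}+\|Tu\|_{V^*})$ (from Lemma~\ref{lem:inf-sup-a}) to invoke Petree--Tartar's lemma, obtaining directly the inf--sup estimate \eqref{inf-sup-RL} for $A$; surjectivity then follows from Assumption~\ref{ass:riem}($\mathrm a^*$). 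Your route instead inverts $\mathcal A_0$ first and applies the Fredholm alternative to $I+\mathcal A_0^{-1}\mathcal K$, which, as you correctly observe, needs only part~(a) --- so your proof is in that sense more economical. The paper's version, on the other hand, outputs the quantitative inf--sup constant \eqref{inf-sup-RL} as an intermediate product, which is quoted verbatim later (e.g., in the proof of Lemma~\ref{lem:disinfsup:riem}); your approach yields the equivalent stability bound $\|u\|_U\le c\|F\|_{V^*}$ only after the fact.
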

\begin{proof}
In case of $b,q\equiv0$, the assertion follows from Lemma \ref{lem:inf-sup-a}.
In general, the proof is based on Petree-Tartar's lemma \cite[pp. 469, Lemma A.38]{ern-guermond}.
To this end, we define two operators $S \in \mathcal{L}(U;V^*)$ and $T \in \mathcal{L}(U;V^*)$ by
\begin{equation*}
  \langle Su,\varphi\rangle=A(u,\varphi)\quad \text{and}\quad(Tu,\varphi)=-(bu',\varphi)-(qu,\varphi),
\end{equation*}
respectively. By Assumption \ref{ass:riem}(a), the operator $S$ is injective. The compactness of the
operator $T$ follows from $b,q\in L^\infty(D)$ and the compact embedding from
$L^2(D)$ into $V^\ast$. Further, by Lemma \ref{lem:inf-sup-a}, we deduce that for any $u\in U$
\begin{equation*}
  \begin{aligned}
   \|u\|_U &\le c\sup_{\varphi\in V} \frac{ a(u,\varphi)} {\|\varphi\|_V}
   \le c\sup_{\varphi\in V} \frac{ A(u,\varphi)} {\|\varphi\|_V}+c\sup_{\varphi\in V}\frac{ -(bu',\varphi)-(qu,\varphi)} {\|\varphi\|_{V}}\\
   & =c(\|Su \|_{V^*}+\| Tu \|_{V^*}).
  \end{aligned}
\end{equation*}
Then by Petree-Tartar's lemma, the image of the operator $S$ is closed; equivalently,
there exists a constant $c_0>0$ such that
\begin{equation}\label{inf-sup-RL}
  c_0  \|u\|_U \le \sup_{\varphi\in V } \frac {A(u,\varphi)} {\|\varphi\|_V}\quad \forall u\in U.
\end{equation}
This and Assumption \ref{ass:riem}($a^*$) show that the operator $S:U\to V^*$ is bijective, i.e.,
there exists a unique solution $u\in U$ to problem \eqref{eqn:var-RL}.
\end{proof}

\begin{theorem}\label{thm:reg-RL}
Let $b,q\in L^\infty(D)$ and $\langle F,\varphi\rangle=(f,\varphi)$ for some $f\in L^2(D)$, and
Assumption \ref{ass:riem} hold. Then there exists a unique solution $u \in \Hdi0
{\alpha-1+\beta} \cap \Hd1$ to problem \eqref{eqn:var-RL} for any $\beta\in[2-\alpha,1/2)$ and it satisfies
\begin{equation}\label{eqn:reg-RL}
  \| u \|_{\Hdi0 {\alpha-1+\beta}} \le c \|f\|_{L^2(D)}.
\end{equation}
\end{theorem}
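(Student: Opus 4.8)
The plan is to bootstrap from the abstract solvability of Theorem~\ref{thm:wpreg} to the sharper space $\Hdi0{\alpha-1+\beta}$, using the explicit solution formula \eqref{eqn:solrep-RL} for the pure fractional operator and the mapping properties collected in Theorem~\ref{thm:fracop}. Since $V=\Hd{\alpha-1}$ is continuously embedded in $L^2(D)$, the functional $\varphi\mapsto(f,\varphi)$ belongs to $V^\ast$, so Theorem~\ref{thm:wpreg} (whose hypotheses, including Assumption~\ref{ass:riem}, are in force) gives a unique solution $u\in U=\Hd1$ of \eqref{eqn:var-RL}. Using the estimate \eqref{inf-sup-RL} together with $A(u,\varphi)=(f,\varphi)$ and the embedding $V\hookrightarrow L^2(D)$ yields the a priori bound
\begin{equation*}
 \|u\|_U\le c\sup_{\varphi\in V}\frac{A(u,\varphi)}{\|\varphi\|_V}=c\sup_{\varphi\in V}\frac{(f,\varphi)}{\|\varphi\|_V}\le c\|f\|_{L^2(D)}.
\end{equation*}

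The second step is to remove the lower-order terms. Set $g:=f-bu'-qu$. Since $u\in\Hd1\hookrightarrow H^1(D)$ we have $u,u'\in L^2(D)$, so $b\in W^{1,\infty}(D)$ and $q\in L^\infty(D)$ give $g\in L^2(D)$ with $\|g\|_{L^2(D)}\le\|f\|_{L^2(D)}+c\|u\|_U\le c\|f\|_{L^2(D)}$. By construction $u$ solves the reduced variational problem $a(u,\varphi)=(g,\varphi)$ for all $\varphi\in V$, and this problem has a unique solution in $U$ (established right after Lemma~\ref{lem:inf-sup-a}); hence $u$ coincides with \eqref{eqn:solrep-RL} with $f$ replaced by $g$, i.e. $u=-{}_0I_x^{\alpha}g+({}_0I_x^{\alpha}g)(1)\,x^{\alpha-1}$ (the justification of this identification is the delicate point, treated below). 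Then Theorem~\ref{thm:fracop}(c) gives ${}_0I_x^{\alpha}g\in\Hdi0\alpha$ with $\|{}_0I_x^{\alpha}g\|_{\Hdi0\alpha}\le c\|g\|_{L^2(D)}$; the Cauchy--Schwarz inequality (the kernel $(1-\cdot)^{\alpha-1}$ is in $L^2(D)$ since $\alpha>3/2$) gives $|({}_0I_x^{\alpha}g)(1)|\le c\|g\|_{L^2(D)}$; and $x^{\alpha-1}\in\Hdi0{\alpha-1+\beta}$ for every $\beta\in[2-\alpha,1/2)$, which is precisely the regularity underlying \eqref{eqn:solrep-RL}. Since $\alpha-1+\beta\le\alpha$, summing the three contributions yields $u\in\Hdi0{\alpha-1+\beta}$ together with
\begin{equation*}
 \|u\|_{\Hdi0{\alpha-1+\beta}}\le c\|g\|_{L^2(D)}\le c\|f\|_{L^2(D)},
\end{equation*}
which is \eqref{eqn:reg-RL}; the inclusion $u\in\Hd1$ is already contained in Theorem~\ref{thm:wpreg}.

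The step I expect to be the main obstacle is the identification of the variational solution with the closed form \eqref{eqn:solrep-RL}. The subtlety is that $\alpha-1+\beta<\alpha$, so the profile $x^{\alpha-1}$ does \emph{not} belong to $\Hdi0\alpha$, and the integration-by-parts chain underlying the definition \eqref{eqn:a} of $a(\cdot,\cdot)$ cannot be applied to the right-hand side of \eqref{eqn:solrep-RL} as a whole; instead one verifies $a(w,\varphi)=(g,\varphi)$ componentwise for $w:=-{}_0I_x^{\alpha}g+({}_0I_x^{\alpha}g)(1)x^{\alpha-1}$. For the term $-{}_0I_x^{\alpha}g\in\Hdi0\alpha$ the computation preceding \eqref{eqn:a} applies verbatim and gives $a(-{}_0I_x^{\alpha}g,\varphi)=-(\DDR0\alpha(-{}_0I_x^{\alpha}g),\varphi)=(g,\varphi)$ for $\varphi\in C_0^\infty(D)$. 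For the term $x^{\alpha-1}$, Lemma~\ref{l:caprl} (applied to $\varphi$), the change-of-order formula \eqref{eqn:change-order}, and the power rule ${}_0I_x^{2-\alpha}x^{\alpha-2}=\Gamma(\alpha-1)$ give, for $\varphi\in C_0^\infty(D)$,
\begin{equation*}
 a(x^{\alpha-1},\varphi)=(\alpha-1)\big(x^{\alpha-2},\,{}_xI_1^{2-\alpha}\varphi'\big)=(\alpha-1)\big({}_0I_x^{2-\alpha}x^{\alpha-2},\varphi'\big)=(\alpha-1)\Gamma(\alpha-1)\int_0^1\varphi'\,dx=0.
\end{equation*}
Both identities then extend to all $\varphi\in V$ by density of $C_0^\infty(D)$ in $\Hd{\alpha-1}$ and continuity of $a(\cdot,\cdot)$, so $a(w,\varphi)=(g,\varphi)$ on $V$; since $w\in\Hdi0{\alpha-1+\beta}\subset\Hd1=U$ (it vanishes at both endpoints and $\alpha-1+\beta\ge1$), uniqueness in $U$ forces $u=w$. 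Everything else is the routine bookkeeping of the mapping-property estimates already indicated.
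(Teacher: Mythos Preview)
Your argument is correct and follows the same route as the paper: invoke Theorem~\ref{thm:wpreg} for existence and the $\Hd1$ bound, absorb the lower-order terms into a new right-hand side $g=f-bu'-qu\in L^2(D)$, and then read off the $\Hdi0{\alpha-1+\beta}$ regularity from the explicit representation \eqref{eqn:solrep-RL} combined with Theorem~\ref{thm:fracop}(c). The paper compresses this into three lines, whereas you supply the justification of the identification $u=w$ that the paper takes for granted; your componentwise verification that $a(w,\varphi)=(g,\varphi)$ is a legitimate way to close that gap. One small remark: you invoke $b\in W^{1,\infty}(D)$, but the theorem only assumes (and only needs) $b\in L^\infty(D)$, which already suffices for $bu'\in L^2(D)$.
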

\begin{proof}
By Theorem \ref{thm:wpreg}, we have $\|  u \|_{\Hd 1} \le c\|f\|_{L^2(D)}.$
Then we rewrite \eqref{eqn:var-RL} as $ -\DDR0 \alpha u
= \widetilde f$ with $\widetilde f = f - bu' - qu$. Since $b,q\in
L^\infty(D)$ and $u\in \Hd1$, $ bu' +qu \in L^2(D)$, and $\widetilde
f\in L^2(D)$. Then \eqref{eqn:reg-RL} follows from \eqref{eqn:solrep-RL}
and Theorem \ref{thm:fracop}(c).
\end{proof}

Next we consider the adjoint problem: for a given $F \in U^\ast$, find $w\in V$ such that
\begin{equation}\label{eqn:var-RL-adj}
 A(\varphi,w)=\langle \varphi,F \rangle \quad \forall \varphi \in U.
\end{equation}
The inf-sup condition for problem \eqref{eqn:var-RL-adj} with $b,q\equiv0$ was shown in \cite[Theorem 5.5]{WangYang:2013}.
The stability for $b\in W^{1,\infty}(D)$ and $q\in L^\infty(D)$ follows from Assumption
\ref{ass:riem} and the argument in the proof of Theorem \ref{thm:wpreg}.
If $\langle  \varphi,F \rangle=(\varphi,f)$ for some $f\in L^2(D)$, the solution $w$
for $b,q\equiv0$ is given by
$w=-{_x\hspace{-0.3mm}I_1^{\alpha}}f + ({_x\hspace{-0.3mm}I_1^{\alpha}}f) (0)(1-x)^{\alpha-1}.$
This implies  $w\in \Hdi1{\alpha-1+\beta} $ with $\beta \in [2-\alpha,1/2)$.

To rigorously analyze the adjoint problem, we first extend the domain of
the operator ${_x\hspace{-0.3mm}I_1^{\gamma}}$ to the space $\Hd{-\gamma}$, $\gamma\in(0,1/2)$,
the dual space of $\Hd{\gamma}\equiv H^\gamma(D)$, by means
of duality (see \cite{McBride:1979} for an in depth treatment). Specifically, we define ${_x\hspace{-0.3mm}I_1^{\gamma}}$ on $\Hd{-\gamma}$ by
$({_x\hspace{-0.3mm}I_1^{\gamma}} \varphi, \psi ) := \langle \varphi, {_0\hspace{-0.3mm}I_x^{\gamma}}\psi
\rangle$ for all $\varphi \in \Hd{-\gamma},\psi\in L^2(D)$, and for $\alpha>\gamma$, ${_x\hspace{-0.3mm}I_1^{\alpha}} \varphi := {_x\hspace{-0.3mm}I_1^{\alpha-\gamma}}{_x\hspace{-0.3mm}I_1^{\gamma}}\varphi$
 for all $\varphi\in \Hd{-\gamma}$.
Next we verify the consistency relation for $\alpha\in(3/2,2)$
\begin{equation*}
  \DDR1{\alpha}{_x\hspace{-0.3mm}I_1^{\alpha}}\varphi = \varphi \in \Hd{-\gamma} ~~\text{with} ~~\gamma\in(0,1/2).
\end{equation*}
In fact, for any $\psi\in C_0^\infty(D)$, by Theorem \ref{thm:fracop}(a) and \eqref{eqn:change-order}, there holds
\begin{equation*}
\begin{split}
  \langle \DDR1{\alpha}{_x\hspace{-0.3mm}I_1^{\alpha}}\varphi, \psi\rangle
  & = \langle ({_x\hspace{-0.3mm}I_1^{2-\alpha}}{_x\hspace{-0.3mm}I_1^{\alpha}}\varphi)'', \psi\rangle
   \stackrel{\text{def}}{=} \langle ({_x\hspace{-0.3mm}I_1^{2-\gamma}}{_x\hspace{-0.3mm}I_1^{\gamma}}\varphi)'', \psi\rangle
  =  \langle {_x\hspace{-0.3mm}I_1^{2-\gamma}}{_x\hspace{-0.3mm}I_1^{\gamma}}\varphi, \psi''\rangle\\
  & 
  =  ( {_x\hspace{-0.3mm}I_1^{\gamma}}\varphi, {_0\hspace{-0.3mm}I_x^{2-\gamma}} \psi'')
  \stackrel{\text{def}}{=} \langle \varphi, {_0\hspace{-0.3mm}I_x^{2}} \psi''\rangle = \langle \varphi, \psi \rangle.
\end{split}
\end{equation*}
Now we show that, for $\alpha>\gamma>0$, the operator ${_x\hspace{-0.3mm}I_1^{\alpha}}$ is bounded from $\Hd{-\gamma}$
to $\Hdi1{\alpha-\gamma}$. Indeed, by Theorem \ref{thm:fracop}(a) and (c), we have
\begin{equation}\label{int-weak-bound}
\begin{split}
  \|{_x\hspace{-0.3mm}I_1^{\alpha}}\varphi\|_{\Hdi1{\alpha-\gamma}}
  :&= \|{_x\hspace{-0.3mm}I_1^{\alpha-\gamma}}{_x\hspace{-0.3mm}I_1^{\gamma}}\varphi\|_{\Hdi1{\alpha-\gamma}}
  \le c \|{_x\hspace{-0.3mm}I_1^{\gamma}}\varphi\|_{L^2(D)}\\
   & = c \sup_{\psi\in L^2(D)}\frac{\langle  \varphi, {_0\hspace{-0.3mm}I_x^{\gamma}}\psi  \rangle}{\| \psi \|_{L^2(D)}}
     \le c \| \varphi  \|_{\Hd{-\gamma}}.
\end{split}
\end{equation}
Thus the representation $w=-{_x\hspace{-0.3mm}I_1^{\alpha}}f+({_x\hspace{-0.3mm}I_1^{\alpha}}f)(0)(1-x)^{\alpha-1}$ is
a solution to \eqref{eqn:var-RL-adj} with $b,q\equiv0$ and $F = f \in \Hd{-\gamma}$, $\gamma\in(0,1/2)$.
In sum, we have the following lemma. 
\begin{lemma}\label{lem:adj-RL}
Let $F=f\in \Hd{-\gamma}$, $\gamma\in(0,1/2)$, and $b,q\equiv0$. Then
$w=-{_x\hspace{-0.3mm}I_1^{\alpha}}f+({_x\hspace{-0.3mm}I_1^{\alpha}}f)(0)(1-x)^{\alpha-1}$
is a solution of problem \eqref{eqn:var-RL-adj} and for $\beta\in[2-\alpha,1/2)$
$$ \| w  \|_{\Hdi1{\alpha-1+\beta}} \le c\| f \|_{\Hd{-\gamma}}.  $$
\end{lemma}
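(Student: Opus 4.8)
The plan is to verify directly that the stated representation $w = -{_x\hspace{-0.3mm}I_1^{\alpha}}f + ({_x\hspace{-0.3mm}I_1^{\alpha}}f)(0)(1-x)^{\alpha-1}$ solves the adjoint problem \eqref{eqn:var-RL-adj} with $b,q\equiv0$, and then to read off the regularity estimate from the mapping properties already established in \eqref{int-weak-bound}. First I would check that $w$ is a legitimate element of $V = \Hd{\alpha-1}$. By \eqref{int-weak-bound}, ${_x\hspace{-0.3mm}I_1^{\alpha}}f \in \Hdi1{\alpha-\gamma}$, and since $\gamma\in(0,1/2)$ we have $\alpha-\gamma > \alpha - 1/2 > \alpha-1$, so this term is in $\Hdi1{\alpha-1}$; to extract the boundary value $({_x\hspace{-0.3mm}I_1^{\alpha}}f)(0)$ one needs $\alpha-\gamma > 1/2$, which holds since $\alpha > 3/2$. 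The correction term $(1-x)^{\alpha-1}$ lies in $\Hdi1{\alpha-1+\beta}$ for $\beta\in[2-\alpha,1/2)$ (as already noted in the proof of Lemma \ref{lem:inf-sup-a}), hence in $\Hd{\alpha-1}$. Moreover $w(1)=0$ automatically and the subtraction of the boundary term arranges $w(0)=0$, consistent with membership in $\Hd{\alpha-1}$.

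Next I would verify the variational identity. For $b,q\equiv0$ the bilinear form is $A(\varphi,w) = a(\varphi,w) = -(\varphi', {\DDR1{\alpha-1}}w)$. The key computation is ${\DDR1{\alpha-1}}w = {\DDR1{\alpha-1}}(-{_x\hspace{-0.3mm}I_1^{\alpha}}f) + ({_x\hspace{-0.3mm}I_1^{\alpha}}f)(0)\,{\DDR1{\alpha-1}}(1-x)^{\alpha-1}$. For the first piece, using the semigroup property (Theorem \ref{thm:fracop}(a)) write ${_x\hspace{-0.3mm}I_1^{\alpha}}f = {_x\hspace{-0.3mm}I_1^{1}}{_x\hspace{-0.3mm}I_1^{\alpha-1}}f$, so that ${\DDR1{\alpha-1}}{_x\hspace{-0.3mm}I_1^{\alpha}}f = {\DDR1{\alpha-1}}{_x\hspace{-0.3mm}I_1^{\alpha-1}}\big({_x\hspace{-0.3mm}I_1^{1}}f\big) = {_x\hspace{-0.3mm}I_1^{1}}f$ by a consistency relation analogous to the one verified above for ${\DDR1{\alpha}}{_x\hspace{-0.3mm}I_1^{\alpha}}$ (applied at order $\alpha-1$, which is in $(1/2,1)$). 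For the second piece, ${\DDR1{\alpha-1}}(1-x)^{\alpha-1}$ is the right-sided analogue of the classical formula ${\DDR0{\alpha-1}}x^{\alpha-1} = \Gamma(\alpha)$, hence equals a constant $c_\alpha=\Gamma(\alpha)$. Therefore $-(\varphi', {\DDR1{\alpha-1}}w) = (\varphi', {_x\hspace{-0.3mm}I_1^{1}}f) - c_\alpha\,({_x\hspace{-0.3mm}I_1^{\alpha}}f)(0)\,(\varphi',1)$; the last term vanishes since $\varphi\in U=\Hd1$ satisfies $\varphi(0)=\varphi(1)=0$. Finally $(\varphi', {_x\hspace{-0.3mm}I_1^{1}}f) = ({_0\hspace{-0.3mm}I_x^{1}}\varphi', f)$ by \eqref{eqn:change-order}, and ${_0\hspace{-0.3mm}I_x^{1}}\varphi'(x) = \varphi(x)-\varphi(0) = \varphi(x)$, giving $a(\varphi,w) = (\varphi,f) = \langle\varphi,F\rangle$ as required. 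One subtlety is that $f$ lives only in $\Hd{-\gamma}$, not $L^2(D)$, so these manipulations must be read in the duality sense: the extended operator ${_x\hspace{-0.3mm}I_1^{\alpha}}$ on $\Hd{-\gamma}$ and the pairing $\langle\varphi,f\rangle$; the change-of-order and semigroup identities carry over by density of $C_0^\infty(D)$ exactly as in the consistency check preceding the lemma.

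For the estimate, I would simply combine the two pieces. From \eqref{int-weak-bound} with $\alpha-\gamma$ in place of $\alpha-\gamma$ — more precisely, applying it (and Theorem \ref{thm:fracop}(c)) shows ${_x\hspace{-0.3mm}I_1^{\alpha}}f\in\Hdi1{\alpha-1+\beta}$ whenever $\alpha-1+\beta < \alpha-\gamma$, i.e. $\beta < 1-\gamma$; since $\beta<1/2$ and $\gamma<1/2$ this is compatible for $\gamma$ small, and more robustly one notes $\alpha-1+\beta \le \alpha-1/2 < \alpha-\gamma$ fails only near $\gamma=1/2$, so a more careful bookkeeping picks $\gamma$ depending on $\beta$, or one invokes \eqref{int-weak-bound} directly which already delivers $\Hdi1{\alpha-\gamma}\hookrightarrow\Hdi1{\alpha-1+\beta}$ for suitable $\gamma$. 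Then $\|{_x\hspace{-0.3mm}I_1^{\alpha}}f\|_{\Hdi1{\alpha-1+\beta}}\le c\|f\|_{\Hd{-\gamma}}$, and the boundary coefficient obeys $|({_x\hspace{-0.3mm}I_1^{\alpha}}f)(0)|\le c\|{_x\hspace{-0.3mm}I_1^{\alpha}}f\|_{\Hdi1{\alpha-\gamma}}\le c\|f\|_{\Hd{-\gamma}}$ by the trace inequality (valid since $\alpha-\gamma>1/2$), while $\|(1-x)^{\alpha-1}\|_{\Hdi1{\alpha-1+\beta}}\le c$. Summing gives \eqref{eqn:reg-RL}-type bound $\|w\|_{\Hdi1{\alpha-1+\beta}}\le c\|f\|_{\Hd{-\gamma}}$.

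The main obstacle I anticipate is the careful tracking of the regularity indices: reconciling the range $\beta\in[2-\alpha,1/2)$ with the loss $\gamma$ coming from the dual operator, and ensuring the trace $({_x\hspace{-0.3mm}I_1^{\alpha}}f)(0)$ is well defined. This is resolved by observing $\alpha>3/2$ forces $\alpha-\gamma>1/2$ for $\gamma<1$, so the trace is always legitimate, and by choosing, if necessary, $\gamma$ sufficiently small relative to the target $\beta$ — the statement as written already fixes both, so the verification reduces to the inequalities $\alpha-1+\beta<\alpha-\gamma$ type bookkeeping, handled via \eqref{int-weak-bound} together with Theorem \ref{thm:fracop}(c). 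Everything else is a routine translation of the $L^2$ argument into the duality framework set up immediately before the lemma.
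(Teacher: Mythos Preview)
Your proposal is correct and follows the same approach as the paper, whose argument is precisely the discussion immediately preceding the lemma (duality extension of ${_x\hspace{-0.3mm}I_1^{\gamma}}$, the consistency relation $\DDR1{\alpha}{_x\hspace{-0.3mm}I_1^{\alpha}}\varphi=\varphi$, and the bound \eqref{int-weak-bound}); you simply add an explicit verification of the variational identity $a(\varphi,w)=\langle\varphi,f\rangle$. Your hesitation over the index bookkeeping is unnecessary: since $\beta<1/2$ and $\gamma<1/2$, the inequality $\alpha-1+\beta<\alpha-\gamma$ always holds, so the embedding $\Hdi1{\alpha-\gamma}\hookrightarrow\Hdi1{\alpha-1+\beta}$ needs no further restriction.
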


Now we can state a regularity result for the adjoint problem \eqref{eqn:var-RL-adj}.
\begin{theorem}\label{thm:reg-RL-adj}
Let $b\in W^{1,\infty}(D)$, $q\in L^\infty(D)$ and $\langle F,\varphi\rangle=(f,\varphi)$ for
some $f\in L^2(D)$ and Assumption \ref{ass:riem} hold. Then there exists a unique
solution $w \in \Hdi1{\alpha-1+\beta} \cap \Hd{\alpha-1}$ to problem \eqref{eqn:var-RL-adj}
for any $\beta\in[2-\alpha,1/2)$ and it satisfies
\begin{equation*}
  \| w \|_{\Hdi1 {\alpha-1+\beta}} \le c \|f\|_{L^2(D)}.
\end{equation*}
\end{theorem}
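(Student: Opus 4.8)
The plan is to run a bootstrap argument analogous to the one used for the primal problem in Theorem~\ref{thm:reg-RL}, but — because the adjoint solution $w$ is a priori only in $V=\Hd{\alpha-1}$ rather than in $\Hd1$ — the convection term has to be absorbed into the data as an element of a negative‑order space instead of $L^2(D)$. First, since $U=\Hd1\hookrightarrow L^2(D)$, the datum $F=f$ obeys $\|F\|_{U^*}\le c\|f\|_{L^2(D)}$, and the stability of the adjoint form under Assumption~\ref{ass:riem} (obtained, as noted just before the theorem, by applying the Petree--Tartar argument of Theorem~\ref{thm:wpreg} to $A(\varphi,\cdot)$, which is where $b\in W^{1,\infty}(D)$ enters) produces a unique $w\in V$ solving \eqref{eqn:var-RL-adj} with $\|w\|_{\Hd{\alpha-1}}\le c\|f\|_{L^2(D)}$. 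This already gives the $\Hd{\alpha-1}$ part of the claim, and since uniqueness in $V$ forces uniqueness in the smaller space, it remains only to upgrade the regularity to $\Hdi1{\alpha-1+\beta}$.

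Next I rewrite \eqref{eqn:var-RL-adj} as an adjoint problem with $b,q\equiv0$: for all $\varphi\in U$,
\[
  a(\varphi,w)=(f,\varphi)-(b\varphi',w)-(q\varphi,w)=:\langle\tf,\varphi\rangle .
\]
The crucial step is to verify that $\tf\in\Hd{-\gamma}$ with $\gamma=2-\alpha\in(0,1/2)$ and $\|\tf\|_{\Hd{-\gamma}}\le c\|f\|_{L^2(D)}$. The part $f-qw$ lies in $L^2(D)\hookrightarrow\Hd{-\gamma}$ with norm $\le c(\|f\|_{L^2(D)}+\|w\|_{V})$, since $q\in L^\infty(D)$. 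For the convection part, $b\in W^{1,\infty}(D)\subset H^{\alpha-1}(D)\cap L^\infty(D)$, and since $\alpha-1>1/2$ we have $w\in\Hd{\alpha-1}\cap L^\infty(D)$; hence Lemma~\ref{lem:Hsalg} (applicable because $\alpha-1\neq1/2$) gives $bw\in\Hd{\alpha-1}$ with $\|bw\|_{\Hd{\alpha-1}}\le c\|b\|_{W^{1,\infty}(D)}\|w\|_{V}$. Writing $(b\varphi',w)=(\varphi',bw)=-\langle(bw)',\varphi\rangle$ (integration by parts is legitimate because $\varphi$ vanishes at the endpoints of $D$) and using that differentiation maps $\Hd{\alpha-1}$ boundedly into $\Hd{\alpha-2}=(\Hd{2-\alpha})^*$, one gets $|(b\varphi',w)|\le c\|bw\|_{\Hd{\alpha-1}}\|\varphi\|_{\Hd{2-\alpha}}$. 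As $\Hd1$ is dense in $\Hd{2-\alpha}=\Hd\gamma$, these bounds show $\tf\in\Hd{-\gamma}$ with the asserted estimate.

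Finally, by Lemma~\ref{lem:adj-RL} the function $\tilde w:=-{_x\hspace{-0.3mm}I_1^{\alpha}}\tf+({_x\hspace{-0.3mm}I_1^{\alpha}}\tf)(0)(1-x)^{\alpha-1}$ solves $a(\varphi,\tilde w)=\langle\tf,\varphi\rangle$ for all $\varphi\in U$ and belongs to $\Hdi1{\alpha-1+\beta}$ for every $\beta\in[2-\alpha,1/2)$; a short check (it vanishes at both endpoints) shows $\tilde w\in\Hd{\alpha-1}=V$. Subtracting the identities for $w$ and $\tilde w$ gives $a(\varphi,w-\tilde w)=0$ for all $\varphi\in U$ with $w-\tilde w\in V$, so $w=\tilde w$ by the uniqueness recorded after Lemma~\ref{lem:inf-sup-a}. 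The bound of Lemma~\ref{lem:adj-RL} then yields $\|w\|_{\Hdi1{\alpha-1+\beta}}\le c\|\tf\|_{\Hd{-\gamma}}\le c\|f\|_{L^2(D)}$, which completes the proof.

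I expect the second paragraph to be the main obstacle. Unlike in the primal case, where $u\in\Hd1$ makes $f-bu'-qu$ automatically an $L^2(D)$ function, here $w'$ is only a distribution of negative order, so the convection term cannot be a right‑hand side in $L^2(D)$ and must instead be read as an element of $\Hd{-(2-\alpha)}$. This is exactly why the extension of ${_x\hspace{-0.3mm}I_1^{\gamma}}$ to $\Hd{-\gamma}$ and Lemma~\ref{lem:adj-RL} were set up beforehand, and why the stronger hypothesis $b\in W^{1,\infty}(D)$ — rather than merely $b\in L^\infty(D)$ — is imposed: it is needed to apply the fractional product rule of Lemma~\ref{lem:Hsalg} and so to control $\|bw\|_{\Hd{\alpha-1}}$.
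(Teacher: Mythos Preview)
Your proof is correct and follows essentially the same route as the paper: obtain $w\in V$ from the adjoint inf--sup condition, rewrite \eqref{eqn:var-RL-adj} as $a(\varphi,w)=\langle\varphi,\tf\rangle$ with $\tf=f+(bw)'-qw\in\Hd{\alpha-2}$, and then invoke Lemma~\ref{lem:adj-RL} for the representation and the $\Hdi1{\alpha-1+\beta}$ bound. Your argument is just more explicit than the paper's---you spell out the use of Lemma~\ref{lem:Hsalg} to control $\|bw\|_{\Hd{\alpha-1}}$ and the uniqueness step identifying $w$ with $\tilde w$, both of which the paper leaves implicit.
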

\begin{proof}
By the inf-sup condition, there exists a solution $w\in \Hd {\alpha-1}$ to \eqref{eqn:var-RL-adj}.
Next we rewrite it into $a(\varphi,w) = \langle \varphi, \widetilde{f}\rangle,$ with $\widetilde{f}=
f+(bw)'-qw$, with $\|\widetilde f\|_{\Hd{\alpha-2}}\leq c\|f\|_{L^2(D)}$. By Lemma
\ref{lem:adj-RL}, $w=-{_x\hspace{-0.3mm}I_1^{\alpha}}\widetilde{f}+({_x\hspace{-0.3mm}I_1^{\alpha}}
\widetilde{f})(0)(1-x)^{\alpha-1}$. Since $ \alpha>3/2$, by Theorem \ref{thm:fracop}(c),
${_x\hspace{-0.3mm}I_1^{\alpha}}\widetilde{f}\in\Hdi 1{2\alpha-2} \subset \Hdi 1 {\alpha-1+\beta}$,
for any $\beta\in[2-\alpha,1/2)$. Hence, the desired estimate holds:
$ \| w\|_{\Hdi1 {\alpha-1+\beta}} \le c\|  \widetilde f \|_{\Hd{\alpha-2}} \leq c \| f \|_{L^2(D)}.$
\end{proof}

\subsection{Variational formulation in the Caputo case}
Now we consider the Caputo case. For $b,q\equiv0$,
the solution of \eqref{eqn:fde} is given by \cite[Section 3]{JinLazarovPasciak:2013a}
\begin{equation}\label{eqn:solrep-Cap}
 u=- {_x\hspace{-0.3mm}I_1^{\alpha}}f+ ({_x\hspace{-0.3mm}I_1^{\alpha}}f)(0)x \in H^\alpha(D)\cap \Hd1.
\end{equation}
Recall the defining relation $\DDC0\alpha u = {\DDR 0 \alpha} (u-u'(0)x-u(0))$
\cite[pp. 91]{KilbasSrivastavaTrujillo:2006}. Hence, with $u(0)=0$,  for any
$\varphi \in \widetilde H_R^\infty(D)$, upon integration by parts, we have
\begin{equation*}
   ({\DDC 0 \alpha}u,\varphi) = ({\DDR 0 \alpha}(u-u'(0)x),\varphi) = (u',{\DDR1 {\alpha-1}}\varphi) - u'(0)({\DDR 0 \alpha}x,\varphi).
\end{equation*}
Since  $u'(0)$ does not make sense on the space $\Hd 1$, we choose the test
function $\varphi$ such that $({\DDR 0 \alpha} x, \varphi)=0$, i.e., $(x^{1-\alpha},\varphi)=0$. Hence,
we let $ W=\{\varphi\in \Hdi1{\alpha-1}: (\varphi,x^{1-\alpha})=0\}$, and
introduce a bilinear form $a(\cdot,\cdot):U\times W\to \mathbb{R}$ by
\begin{equation}\label{eqn:a-C}
a(u,\varphi) = -(u',~\DDR1 {\alpha-1} \varphi).
\end{equation}
The only difference from the Riemann-Liouville case lies in the test
space $W$: in the Caputo case, it  involves an integral constraint $(\varphi,x^{1-\alpha})=0$.

The following lemma gives the stability of the bilinear form.
\begin{lemma}\label{lem:inf-sup-a-C}
The bilinear form $a(\cdot,\cdot)$ in \eqref{eqn:a-C} satisfies
\begin{equation}\label{eqn:inf-sup1-C}
 \sup_{\varphi \in W} \frac{a(u,\varphi)}{\| \varphi \|_{W}} \ge c_0 \| u \|_U.
\end{equation}
\end{lemma}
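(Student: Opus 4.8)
The plan is to follow the proof of Lemma~\ref{lem:inf-sup-a} for the Riemann--Liouville case, since the bilinear form $a(\cdot,\cdot)$ is literally the same and only the test space has changed: where $V=\Hd{\alpha-1}$ forced vanishing at both endpoints of $D$, the space $W$ now only carries the endpoint condition implicit in $\Hdi1{\alpha-1}$ together with the single integral constraint $(\varphi,x^{1-\alpha})=0$. For a fixed nonzero $u\in U$ I would take the test function
\[
  \varphi_u \;=\; \DDR1{2-\alpha}u \;=\; -\,{_x\hspace{-0.3mm}I_1^{\alpha-1}}u',
\]
the second identity being Lemma~\ref{l:caprl} (note $u\in\Hd1\subset\Hdi11$). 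Unlike the Riemann--Liouville case, no correction term is needed, and the heart of the argument is to see why: $\varphi_u$ already lies in $W$. Indeed $\varphi_u={_x\hspace{-0.3mm}I_1^{\alpha-1}}u'\in\Hdi1{\alpha-1}$ is continuous on $\bar D$ (as $\alpha-1>1/2$), while $x^{1-\alpha}\in L^1(D)$ (as $1-\alpha>-1$), so the pairing $(\varphi_u,x^{1-\alpha})$ is absolutely convergent; and interchanging the order of integration, which is legitimate by Tonelli since $\int_0^t(t-x)^{\alpha-2}x^{1-\alpha}\,dx=B(\alpha-1,2-\alpha)=\Gamma(\alpha-1)\Gamma(2-\alpha)$ is a constant independent of $t$, yields
\[
  (\varphi_u,x^{1-\alpha})\;=\;-\,\Gamma(2-\alpha)\,(u',1)\;=\;-\,\Gamma(2-\alpha)\big(u(1)-u(0)\big)\;=\;0 .
\]

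Granting $\varphi_u\in W$, the rest mirrors Lemma~\ref{lem:inf-sup-a}. First, by Lemma~\ref{l:caprl} and Theorem~\ref{thm:fracop}(c) (with $s=0$), $\varphi_u=-{_x\hspace{-0.3mm}I_1^{\alpha-1}}u'$ belongs to $\Hdi1{\alpha-1}$ with $\|\varphi_u\|_{W}\le c\,\|u'\|_{L^2(D)}\le c\,\|u\|_U$, using that the $W$-norm is the $\Hdi1{\alpha-1}$-norm. Next, since $\DDR1{\alpha-1}=-\tfrac{d}{dx}\,{_x\hspace{-0.3mm}I_1^{2-\alpha}}$ and the fractional integrals have the semigroup property (Theorem~\ref{thm:fracop}(a)), one gets $\DDR1{\alpha-1}{_x\hspace{-0.3mm}I_1^{\alpha-1}}u'=-\tfrac{d}{dx}\,{_x\hspace{-0.3mm}I_1^{1}}u'=u'$ in $L^2(D)$, hence $\DDR1{\alpha-1}\varphi_u=-u'$ and
\[
  a(u,\varphi_u)\;=\;-(u',\,\DDR1{\alpha-1}\varphi_u)\;=\;(u',u')\;=\;\|u'\|_{L^2(D)}^2 .
\]
By the Poincar\'e inequality on $U=\Hd1$ this is bounded below by $c\,\|u\|_U^2$ (in particular $\varphi_u\neq0$ whenever $u\neq0$), so combining the last two estimates gives, for every nonzero $u\in U$,
\[
  \sup_{\varphi\in W}\frac{a(u,\varphi)}{\|\varphi\|_W}\;\ge\;\frac{a(u,\varphi_u)}{\|\varphi_u\|_W}\;\ge\;\frac{c\,\|u\|_U^2}{c\,\|u\|_U}\;=\;c_0\,\|u\|_U ,
\]
which is \eqref{eqn:inf-sup1-C} (the case $u=0$ being trivial).

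The one genuinely new step relative to the Riemann--Liouville argument is checking that $\DDR1{2-\alpha}u$ satisfies the integral constraint defining $W$, and the mild care this requires because $x^{1-\alpha}\notin L^2(D)$ when $\alpha>3/2$, so the change-of-order identity \eqref{eqn:change-order} cannot be applied verbatim; I would handle this by the direct Tonelli/Fubini computation above (equivalently, by observing that ${_0\hspace{-0.3mm}I_x^{\alpha-1}}(x^{1-\alpha})\equiv\Gamma(2-\alpha)$ and approximating $x^{1-\alpha}$ in $\Hd{-\gamma}$ with $\gamma\in(0,1/2)$). The remaining operator identities are covered by Theorem~\ref{thm:fracop} and Lemma~\ref{l:caprl}, and the concluding chain of inequalities is routine.
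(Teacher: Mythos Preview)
Your proof is correct and, in fact, slightly cleaner than the paper's. Both arguments choose a test function built from $\DDR1{2-\alpha}u=-{_xI_1^{\alpha-1}}u'$, bound its $W$-norm via Theorem~\ref{thm:fracop}(c), and then compute $a(u,\varphi_u)=\|u'\|_{L^2(D)}^2$. The difference is that the paper writes $\varphi_u=-{_xI_1^{\alpha-1}}u'+c_1(1-x)^{\alpha-1}$ with $c_1$ chosen to enforce the constraint $(\varphi_u,x^{1-\alpha})=0$, and then bounds $|c_1|$ through $\|{_xI_1^{\alpha-1}}u'\|_{L^\infty(D)}$; you observe via the Beta-integral/Fubini computation that the constraint is already satisfied by $-{_xI_1^{\alpha-1}}u'$ because $(u',1)=u(1)-u(0)=0$, so the correction term (and the accompanying $L^\infty$ estimate) is superfluous. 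This is a genuine simplification: it removes one term from the test function and one estimate from the proof, while the paper's version has the mild advantage of being visibly robust should one ever relax the condition $u(0)=u(1)$. Your care about $x^{1-\alpha}\notin L^2(D)$ is appropriate and the Tonelli justification you give is exactly what is needed.
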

\begin{proof}
For a fixed $u \in \Hd1$, let
$\varphi_u = - {_x\hspace{-0.3mm}I_1^{\alpha-1}}u' + c_1 (1-x)^{\alpha-1},$
where $c_1$ is chosen such that $(\varphi_u,x^{1-\alpha})=0$, i.e.,
$c_1=-(\DDR1 {2-\alpha}u,x^{1-\alpha})/( (1-x)^{\alpha-1}, x^{1-\alpha}).$
Since $u'\in L^2(D)$ and $(1-x)^{\alpha-1}\in \Hdi 1{\alpha-1+\beta}$
with $\beta\in[2-\alpha,1/2)$, by Theorem \ref{thm:fracop}(c),
we have $\varphi_u\in W$. Further, by Theorem \ref{thm:fracop},
\begin{equation*}
  \| \varphi_u \|_{W} \le c(\| {_x\hspace{-0.3mm}I_1^{\alpha-1}}u' \|_{\Hdi1{\alpha-1}}
  + \|{_x\hspace{-0.3mm}I_1^{\alpha-1}}u' \|_{L^\infty(D)})
    \le  c\| u \|_U.
\end{equation*}
The desired assertion follows from the argument in the proof of Lemma \ref{lem:inf-sup-a}.
\end{proof}

For any $\varphi\neq 0 \in W$, let $u_\varphi = {_x\hspace{-0.3mm}I_1^{2-\alpha}}\varphi$. Since $\varphi\in W$, $
({_x\hspace{-0.3mm}I_1^{2-\alpha}}\varphi)(0)=0$, and obviously $u_\varphi(1)=0$, we deduce $ u_\varphi\in U $. Further,
\begin{equation}\label{eqn:cap-inj}
  a(u_\varphi,\varphi)=-(u_\varphi',~\DDR1 {\alpha-1}\varphi) = \| \DDR1 {\alpha-1}\varphi \|_{L^2(D)}^2 = \|\varphi\|_W^2 >0.
\end{equation}
Hence if $a(u,\varphi)=0$ for all $u\in U$, then $\varphi=0$.  This and Lemma \ref{lem:inf-sup-a-C}
imply the variational stability. In the case $b,q\ne0$, the variational formulation reads:
given any $F\in W^*$, find $u\in U$ such that
\begin{equation}\label{eqn:var-Caputo}
  A(u,\varphi)= \langle F, \varphi \rangle \quad \forall\varphi \in W,
\end{equation}
where  the bilinear form $A(\cdot,\cdot):U\times W\to \mathbb{R}$ is given by
\begin{equation*}
  A(u,\varphi) = a(u,\varphi) +(bu',\varphi) + (qu,\varphi).
\end{equation*}
To analyze problem \eqref{eqn:var-Caputo}, we assume the unique solvability.
\begin{assumption}\label{ass:caputo}
Let the bilinear form $A(\cdot,\cdot):U\times W\to\mathbb{R}$ satisfy
\begin{itemize}
 \item[{$\mathrm{(a)}$}]The problem of finding $u \in U$ such that $A(u,\varphi)=0$ for all $\varphi \in W$
           has only the trivial solution $u\equiv 0$.
 \item[{$(\mathrm{a}^\ast)$}] The problem of finding $\varphi \in W$ such that $A(u,\varphi)=0$ for all $u \in U$
    has only the trivial solution $\varphi\equiv 0$.
\end{itemize}
\end{assumption}

Under Assumption \ref{ass:caputo}, we have the following existence result. The proof is identical
to that of Theorem \ref{thm:wpreg} and hence omitted.
\begin{theorem} \label{thm:wpreg-C}
Let Assumption \ref{ass:caputo} hold and $b,q\in L^\infty(D)$. Then for any $F\in W^*$, there
exists a unique solution $u\in U$ to \eqref{eqn:var-Caputo}.
\end{theorem}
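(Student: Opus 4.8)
The plan is to mimic the proof of Theorem~\ref{thm:wpreg} verbatim, replacing the test space $V$ by $W$ throughout and invoking Lemma~\ref{lem:inf-sup-a-C} in place of Lemma~\ref{lem:inf-sup-a}. Concretely, I would define the operators $S\in\mathcal{L}(U;W^*)$ and $T\in\mathcal{L}(U;W^*)$ by $\langle Su,\varphi\rangle=A(u,\varphi)$ and $(Tu,\varphi)=-(bu',\varphi)-(qu,\varphi)$ for $\varphi\in W$. Assumption~\ref{ass:caputo}(a) gives injectivity of $S$. The compactness of $T$ follows, as before, from $b,q\in L^\infty(D)$ together with the compact embedding of $L^2(D)$ into $W^*$; one should note here that $W$ is a closed subspace of $\Hdi1{\alpha-1}\subset L^2(D)$, so $L^2(D)$ embeds into $W^*$ by restriction of functionals, and the embedding inherits compactness from that of $L^2(D)$ into $(\Hdi1{\alpha-1})^*$.

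Next, using Lemma~\ref{lem:inf-sup-a-C} exactly as in Theorem~\ref{thm:wpreg}, one writes for any $u\in U$
\begin{equation*}
  \|u\|_U \le c\sup_{\varphi\in W}\frac{a(u,\varphi)}{\|\varphi\|_W}
  \le c\sup_{\varphi\in W}\frac{A(u,\varphi)}{\|\varphi\|_W}
   + c\sup_{\varphi\in W}\frac{-(bu',\varphi)-(qu,\varphi)}{\|\varphi\|_W}
  = c\bigl(\|Su\|_{W^*}+\|Tu\|_{W^*}\bigr).
\end{equation*}
Petree–Tartar's lemma then yields that the range of $S$ is closed, equivalently an inf-sup condition $c_0\|u\|_U\le\sup_{\varphi\in W}A(u,\varphi)/\|\varphi\|_W$ for all $u\in U$. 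Combining this with Assumption~\ref{ass:caputo}($a^*$) (surjectivity of $S$, since the annihilator of the range is trivial) shows $S:U\to W^*$ is bijective, which is exactly the claimed unique solvability of \eqref{eqn:var-Caputo}.

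Since the paper already says the proof is "identical to that of Theorem~\ref{thm:wpreg} and hence omitted," the only genuine point requiring care — and the one I would flag as the main (minor) obstacle — is checking that the structural ingredients of the Riemann–Liouville argument survive the replacement of $V$ by the constrained space $W$: namely that $W$ is a closed subspace of a space compactly embedded via the fractional-integral mapping, so that $L^2(D)\hookrightarrow\hookrightarrow W^*$ still holds, and that the base inf-sup estimate for the principal part $a(\cdot,\cdot)$ on $U\times W$ is available — but this is precisely the content of Lemma~\ref{lem:inf-sup-a-C}. Everything else is a line-by-line transcription, so I would simply state that the proof is identical and refer the reader to Theorem~\ref{thm:wpreg}, exactly as the authors do.
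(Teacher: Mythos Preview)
Your proposal is correct and matches the paper's approach exactly: the authors state that the proof is identical to that of Theorem~\ref{thm:wpreg} and omit it, and your transcription---defining $S,T\in\mathcal{L}(U;W^*)$, using Assumption~\ref{ass:caputo}(a) for injectivity, the compact embedding $L^2(D)\hookrightarrow W^*$ for compactness of $T$, Lemma~\ref{lem:inf-sup-a-C} for the base inf-sup estimate, Petree--Tartar's lemma, and Assumption~\ref{ass:caputo}($a^*$) for surjectivity---is precisely that argument. The minor point you flag about $W$ being a closed subspace (so that the compact embedding into $W^*$ still holds) is the only thing requiring a moment's thought, and you handle it correctly.
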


\begin{theorem} \label{thm:regcap}
Let Assumption \ref{ass:caputo} hold and $s\in[0,1/2)$. If $\langle F,v\rangle=(f,v)$
for some $f\in {H^s}(D)$, and $b,q\in L^\infty(D)\cap H^s(D)$, then
the solution $u\in U$ of \eqref{eqn:var-Caputo} is in $H^{\alpha+s}(D)\cap \Hd1$
and further it satisfies
\begin{equation*}
  \|u\|_{H^{\alpha+s}(D)}\leq c\|f\|_{{H^s}(D)}.
\end{equation*}
\end{theorem}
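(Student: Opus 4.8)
The plan is to upgrade the bare existence statement of Theorem~\ref{thm:wpreg-C} by a two-step bootstrap built on the explicit solution formula \eqref{eqn:solrep-Cap} for the reduced ($b,q\equiv0$) problem. By Theorem~\ref{thm:wpreg-C} there is a unique $u\in U$ solving \eqref{eqn:var-Caputo}, with $\|u\|_{\Hd1}\le c\|f\|_{L^2(D)}\le c\|f\|_{H^s(D)}$. The structural fact to exploit is that, since $u\in\Hd1$ and $b,q\in L^\infty(D)$, the products $bu',qu$ lie in $L^2(D)$, so moving them to the right-hand side turns \eqref{eqn:var-Caputo} into $a(u,\varphi)=(\tf,\varphi)$ for all $\varphi\in W$, where $\tf:=f-bu'-qu\in L^2(D)$. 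Thus $u$ is the variational solution of the reduced problem with source $\tf$; by the stability of that problem (Lemma~\ref{lem:inf-sup-a-C} together with the injectivity identity \eqref{eqn:cap-inj}) and the representation \eqref{eqn:solrep-Cap}, this forces
\begin{equation*}
 u=-{_x\hspace{-0.3mm}I_1^{\alpha}}\tf+\big({_x\hspace{-0.3mm}I_1^{\alpha}}\tf\big)(0)\,x .
\end{equation*}
Channelling the argument through this uniqueness statement, rather than manipulating $\DDC0\alpha u$ directly, sidesteps the fact that $\DDC0\alpha u$ has no pointwise meaning on $\Hd1$.

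In the first step, applying Theorem~\ref{thm:fracop}(c) with smoothness index $0$ to the above closed-form expression gives ${_x\hspace{-0.3mm}I_1^{\alpha}}\tf\in\Hdi1\alpha\subset H^\alpha(D)$, while the correction term $\big({_x\hspace{-0.3mm}I_1^{\alpha}}\tf\big)(0)\,x$ is smooth. Hence $u\in H^\alpha(D)\cap\Hd1$ with $\|u\|_{H^\alpha(D)}\le c\|\tf\|_{L^2(D)}\le c\|f\|_{L^2(D)}\le c\|f\|_{H^s(D)}$. For $s=0$ this is already the assertion, and it remains to treat $s\in(0,1/2)$.

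In the second step, we note that $u\in H^\alpha(D)$ with $\alpha-1>1/2$ implies $u'\in H^{\alpha-1}(D)\hookrightarrow L^\infty(D)\cap H^s(D)$ (the embedding into $H^s(D)$ holding since $s<1/2<\alpha-1$), and likewise $u\in H^\alpha(D)\hookrightarrow L^\infty(D)\cap H^s(D)$. Since for $s<1/2$ the zero extension is bounded on $H^s$, we have $\Hd s=H^s(D)$ and $\Hdi1{s}=H^s(D)$ with equivalent norms. Then Lemma~\ref{lem:Hsalg}, applied with the coefficient factors $b,q\in L^\infty(D)\cap H^s(D)$, yields $bu',qu\in H^s(D)$ with $\|bu'\|_{H^s(D)}+\|qu\|_{H^s(D)}\le c\|u\|_{H^\alpha(D)}$, whence $\tf\in H^s(D)$ and $\|\tf\|_{H^s(D)}\le c\big(\|f\|_{H^s(D)}+\|u\|_{H^\alpha(D)}\big)\le c\|f\|_{H^s(D)}$. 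Feeding this improved regularity of $\tf$ back into the closed-form expression and invoking Theorem~\ref{thm:fracop}(c) once more gives ${_x\hspace{-0.3mm}I_1^{\alpha}}\tf\in\Hdi1{\alpha+s}\subset H^{\alpha+s}(D)$, and the point value $\big({_x\hspace{-0.3mm}I_1^{\alpha}}\tf\big)(0)$ is controlled by $\|{_x\hspace{-0.3mm}I_1^{\alpha}}\tf\|_{H^{\alpha+s}(D)}\le c\|\tf\|_{H^s(D)}$ (legitimate since $\alpha+s>1/2$). Therefore $u\in H^{\alpha+s}(D)\cap\Hd1$ and $\|u\|_{H^{\alpha+s}(D)}\le c\|f\|_{H^s(D)}$.

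The two genuinely delicate points are exactly those flagged above: first, establishing that the variational solution $u$ admits the closed-form representation, which is why the argument is routed through uniqueness for the reduced problem rather than through the Caputo operator pointwise; and second, bounding the products $bu'$ and $qu$ in $H^s(D)$, for which Lemma~\ref{lem:Hsalg} together with the identification $\Hd s=H^s(D)$ for $s<1/2$ is precisely the needed tool. Everything else is bookkeeping; note in particular that, unlike the Riemann--Liouville case, the reduced Caputo solution gains the full $\alpha+s$ orders of smoothness over the data, so no intrinsic singularity caps the regularity here.
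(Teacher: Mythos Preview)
Your proof is correct and follows essentially the same two-step bootstrap as the paper: rewrite the problem as $-\DDC0\alpha u=\widetilde f$ with $\widetilde f=f-bu'-qu$, use the representation \eqref{eqn:solrep-Cap} to place $u$ in $H^\alpha(D)$, then invoke Lemma~\ref{lem:Hsalg} to upgrade $\widetilde f$ to $H^s(D)$ and apply \eqref{eqn:solrep-Cap} once more. Your version is in fact more careful than the paper's on two points---justifying the closed-form representation via uniqueness for the reduced problem, and making explicit the identification $\Hd s=H^s(D)=\Hdi1 s$ for $s<1/2$ needed to apply both Lemma~\ref{lem:Hsalg} and Theorem~\ref{thm:fracop}(c)---but the underlying argument is the same.
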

\begin{proof}
We show the regularity, by rewriting \eqref{eqn:var-Caputo} as
$-\DDC0{\alpha} u = \widetilde{f}$, with $\widetilde f=f-bu'-qu.$
By Lemma \ref{lem:Hsalg}, $bu'\in L^2(D)$ and $qu\in H^s(D) $, and thus $\widetilde f\in L^2(D)$.
By \eqref{eqn:solrep-Cap}, $u\in H^\alpha(D) \cap \Hd 1 $, and by Lemma \ref{lem:Hsalg},
$bu'$ and $qu\in H^s(D)$, and thus $\widetilde f\in \Hd s$.
The assertion follows, by appealing again to Theorem \ref{thm:fracop} and \eqref{eqn:solrep-Cap}.
\end{proof}

\begin{remark}
In the Riemann-Liouville case, the solution is limited to $H^{\alpha-1+\beta}(D)$, $\beta\in [2-\alpha,1/2)$,
irrespective of the smoothness of $f$, whereas in the Caputo case, it can be
made smoother, by imposing suitable smoothness on $q$, $b$ and  $f$.
\end{remark}

Now we consider the adjoint problem: given any $F\in U^*$,
find $w\in W$ such that
\begin{equation}\label{eqn:var-adj-C}
 A(\varphi, w) = \langle \varphi , F\rangle \quad   \forall \varphi \in U .
\end{equation}

The next result gives the well-posedness of the adjoint problem.
\begin{theorem}\label{thm:wp-adj-C}
Let Assumption \ref{ass:caputo} hold, $b \in W^{1,\infty}(D)$ and $q\in L^\infty(D)$. Then for any $F\in U^*$, there
exists a unique solution $w\in W$ to \eqref{eqn:var-adj-C}.
\end{theorem}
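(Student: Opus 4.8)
The plan is to mimic the proof of Theorem \ref{thm:wpreg}, now working in the adjoint direction on the pair $U\times W$, and to invoke Peetre--Tartar's lemma once more. First I would record the inf-sup condition for the adjoint of the leading bilinear form $a(\cdot,\cdot)$ on $U\times W$: for every nonzero $\varphi\in W$ there is $u\in U$ with $a(u,\varphi)\neq 0$ and, conversely, the estimate $\sup_{u\in U}a(u,\varphi)/\|u\|_U\ge c_0\|\varphi\|_W$. The second (the genuine inf-sup estimate in the adjoint variable) is exactly what is delivered by the test-function construction $u_\varphi={_x\hspace{-0.3mm}I_1^{2-\alpha}}\varphi$ used just after Lemma \ref{lem:inf-sup-a-C}: there \eqref{eqn:cap-inj} shows $a(u_\varphi,\varphi)=\|\varphi\|_W^2$ while $\|u_\varphi\|_U\le c\|\varphi\|_W$ by Theorem \ref{thm:fracop}(c), so that $\sup_{u\in U}a(u,\varphi)/\|u\|_U\ge c_0\|\varphi\|_W$; the first (non-degeneracy in $u$) is Lemma \ref{lem:inf-sup-a-C}. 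Thus the pure leading-term adjoint problem is well posed.

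Next I would pass to the full bilinear form $A(\varphi,w)=a(\varphi,w)+(b\varphi',w)+(q\varphi,w)$. Define $S\in\mathcal L(W;U^*)$ by $\langle Sw,\varphi\rangle=A(\varphi,w)$ and $T\in\mathcal L(W;U^*)$ by $\langle Tw,\varphi\rangle=-(b\varphi',w)-(q\varphi,w)$. Injectivity of $S$ is Assumption \ref{ass:caputo}$(\mathrm a^\ast)$. For compactness of $T$ one writes $(b\varphi',w)=\langle\varphi,\,(bw)'\rangle$ after an integration by parts (this is where $b\in W^{1,\infty}(D)$ rather than merely $b\in L^\infty(D)$ is used), so that $T$ factors through the map $w\mapsto (bw)'-qw\in H^{-1}(D)$ followed by the compact embedding $H^{-1}(D)\hookrightarrow U^*$, using $\alpha-1>1/2$ so that $U^*=\widetilde H^{-1}(D)$ embeds with a gain; one must be slightly careful that the integration by parts produces no boundary terms because $\varphi\in U=\Hd1$ vanishes at the endpoints. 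Then the inf-sup estimate for $a$ just established gives, for every $w\in W$,
\begin{equation*}
  \|w\|_W \le c\sup_{\varphi\in U}\frac{a(\varphi,w)}{\|\varphi\|_U}
  \le c\sup_{\varphi\in U}\frac{A(\varphi,w)}{\|\varphi\|_U}
    + c\sup_{\varphi\in U}\frac{-(b\varphi',w)-(q\varphi,w)}{\|\varphi\|_U}
  = c\big(\|Sw\|_{U^*}+\|Tw\|_{U^*}\big).
\end{equation*}
Peetre--Tartar's lemma (\cite[pp.~469, Lemma A.38]{ern-guermond}) then shows the range of $S$ is closed, i.e. there is $c_0>0$ with $c_0\|w\|_W\le\sup_{\varphi\in U}A(\varphi,w)/\|\varphi\|_U$ for all $w\in W$. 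Combining this closed-range-plus-injectivity statement with Assumption \ref{ass:caputo}(a) (which says the transpose of $S$ is injective, i.e. $A(\varphi,\cdot)\equiv0$ forces $\varphi=0$) yields bijectivity of $S:W\to U^*$, hence existence and uniqueness of $w\in W$ for every $F\in U^*$.

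The step I expect to require the most care is the compactness of $T$ together with the attendant bookkeeping of spaces: one has to justify the integration by parts $(b\varphi',w)=-\langle\varphi,(bw)'\rangle$ for $\varphi\in\Hd1$ and $w\in W\subset\Hdi1{\alpha-1}$, and then see that $w\mapsto(bw)'-qw$ maps $W$ boundedly into a space that embeds compactly into $U^*$. Since $w\in\Hdi1{\alpha-1}$ with $\alpha-1>1/2$ and $b\in W^{1,\infty}(D)$, the product rule and Lemma \ref{lem:Hsalg} give $(bw)'\in H^{\alpha-2}(D)$ with $\alpha-2>-1/2$, while $U^*=\widetilde H^{-1}(D)$, so the embedding $H^{\alpha-2}(D)\hookrightarrow \widetilde H^{-1}(D)$ is compact; the remainder of the argument is then the verbatim analogue of Theorem \ref{thm:wpreg}, and I would simply remark that it is omitted for brevity, as the authors already did for Theorem \ref{thm:wpreg-C}.
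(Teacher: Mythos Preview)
Your approach is essentially the paper's: establish the adjoint inf--sup for $a(\cdot,\cdot)$ via $u_w={_x\hspace{-0.3mm}I_1^{2-\alpha}}w$ and \eqref{eqn:cap-inj}, then run Peetre--Tartar exactly as in Theorem~\ref{thm:wpreg}. The paper does precisely this, writing only ``repeating the argument for Theorem~\ref{thm:wpreg}'' for the second step.

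One simplification worth noting: the compactness of $T:W\to U^*$ does \emph{not} require the integration by parts or the hypothesis $b\in W^{1,\infty}(D)$. From
\[
  |\langle Tw,\varphi\rangle| = |(b\varphi'+q\varphi,w)| \le c\,\|\varphi\|_U\,\|w\|_{L^2(D)}
\]
one gets $\|Tw\|_{U^*}\le c\|w\|_{L^2(D)}$ directly, so $T$ factors through the compact embedding $W\hookrightarrow L^2(D)$ (since $\alpha-1>0$). This is the exact analogue of the compactness step in Theorem~\ref{thm:wpreg}, with the compact embedding now on the domain side rather than the target side; the assumption $b\in W^{1,\infty}(D)$ is used later for the regularity in Theorem~\ref{thm:reg-adj-C}, not here. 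Your integration-by-parts route is valid but unnecessary, and in the course of it you misidentify $U^*$: since $U=\Hd1$, its dual is $H^{-1}(D)$, so the embedding ``$H^{-1}(D)\hookrightarrow U^*$'' you first invoke is the identity, and the correct statement is that $H^{\alpha-2}(D)\hookrightarrow H^{-1}(D)$ compactly.
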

\begin{proof}
First consider the case $b, q\equiv0$. For any $w\in W$, ${_x\hspace{-0.3mm}I_1^{2-\alpha}}
w \in U$, and thus Theorem \ref{thm:fracop}(a) yields
\begin{equation*}
 w = -({_x\hspace{-0.3mm}I_1^{1}}w)' = -({_x\hspace{-0.3mm}I_1^{\alpha-1}}{_x\hspace{-0.3mm}I_1^{2-\alpha}}w)'
  = -{_x\hspace{-0.3mm}I_1^{\alpha-1}}({_x\hspace{-0.3mm}I_1^{2-\alpha}}w)' = {_x\hspace{-0.3mm}I_1^{\alpha-1}}{\DDR1{\alpha-1}}w.
\end{equation*}
Hence by Theorem \ref{thm:fracop}(c), for $w\in W$, there holds
\begin{equation}\label{norm-equiv}
 \| w \|_W = \| {_x\hspace{-0.3mm}I_1^{\alpha-1}}
 {\DDR1{\alpha-1}}w \|_W\le c \| \DDR1{\alpha-1}w \|_{L^2(D)}.
\end{equation}
Next, with $u_w={_x\hspace{-0.3mm}I_1^{2-\alpha}}w \in U$ and by Theorem \ref{thm:fracop}(c),
$\| u_w \|_U = \|  \DDR1{\alpha-1} w  \|_{L^2(D)}\le c\| w \|_W.$ Then the inf-sup condition
in case of $b,q\equiv0$ follows from \eqref{eqn:cap-inj} and \eqref{norm-equiv}
\begin{equation}\label{eqn:inf-sup-C2}
  \sup_{u\in U} \frac{a(u,w)}{\| u \|_U} \ge \frac{a(u_w,w)}{\| u_w \|_U} \ge c_0\| w \|_W.
\end{equation}
Moreover, if $u\in U$ and $a(u,w)=0$ for all $w\in W$, then $u=0$ by \eqref{eqn:inf-sup1-C}.
This and \eqref{eqn:inf-sup-C2} indicate that problem \eqref{eqn:var-adj-C} has a unique
solution $w\in W$. The well-posedness for $b,q\neq 0$ follows by repeating the
argument for Theorem \ref{thm:wpreg}.
\end{proof}

Next we derive the regularity pickup for problem \eqref{eqn:var-adj-C}. If $\langle \varphi,
F\rangle = (\varphi,f)$ for some $f\in L^2(D)$, the strong form of the problem reads
\begin{equation}\label{eqn:strong-adj-C}
  -\DDR1 {\alpha} w - (bw)' +qw = f,
\end{equation}
with $w(1) = 0$ and $(w,x^{1-\alpha}) = 0$. For $b, q\equiv 0$, the solution $w$ is given by
\begin{equation*}
  w= -  {_x\hspace{-0.3mm}I_1^{\alpha}}f + c_f (1-x)^{\alpha-1} \quad
  \text{with} ~~ c_f =  ({_x\hspace{-0.3mm}I_1^{\alpha}}f,x^{1-\alpha})/(  (1-x)^{\alpha-1}, x^{1-\alpha}). 
\end{equation*}
The constant $c_f$ can be bounded by
$|c_f| \le c | ( {_x\hspace{-0.3mm}I_1^{\alpha}}f,x^{1-\alpha})| \le c \| f \|_{L^2(D)}.$
Hence $w\in \Hdi1{\alpha-1+\beta} $ with $\beta \in [2-\alpha,1/2)$. The general case can
be analyzed analogously to Theorem \ref{thm:reg-RL-adj}, and then
we have the following regularity result.

\begin{theorem}\label{thm:reg-adj-C}
Let Assumption \ref{ass:caputo} hold, and $b\in W^{1,\infty}(D)$, $q\in L^\infty(D)$.
Then with $\langle \varphi,F\rangle=(\varphi,f)$ for some $f\in L^2(D)$, the solution $w$ to
problem \eqref{eqn:var-adj-C} is in $\Hd{\alpha-1}\cap\Hdi1{\alpha-1+\beta}$ for any
$\beta\in[2-\alpha,1/2)$ and further it satisfies
\begin{equation*}
  \|  w \|_{\Hdi1{\alpha-1+\beta}} \le c \| f \|_{L^2(D)}.
\end{equation*}
\end{theorem}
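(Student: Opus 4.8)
The plan is to mimic the proof of Theorem~\ref{thm:reg-RL-adj} in the Caputo setting, bootstrapping from the abstract well-posedness (Theorem~\ref{thm:wp-adj-C}) to the sharp regularity. First I would invoke Theorem~\ref{thm:wp-adj-C} to obtain a unique solution $w\in W\subset\Hd{\alpha-1}$ with the a priori bound $\|w\|_{\Hd{\alpha-1}}\le c\|f\|_{L^2(D)}$, using that the functional $\varphi\mapsto(\varphi,f)$ lies in $U^*$ for $f\in L^2(D)$. This takes care of the low-regularity component of the assertion and gives us a handle on the $L^\infty$-type quantities appearing in the lower order terms.

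Next I would move the convection and potential terms to the right-hand side: rewrite \eqref{eqn:var-adj-C} as $a(\varphi,w)=\langle\varphi,\widetilde f\rangle$ for all $\varphi\in U$, where $\widetilde f=f+(bw)'-qw$ in the distributional sense. Since $b\in W^{1,\infty}(D)$ and $w\in\Hd{\alpha-1}\subset H^{\alpha-1}(D)$ with $\alpha-1>1/2$, the product rule together with Lemma~\ref{lem:Hsalg} shows $bw\in\Hd{\alpha-1}$, hence $(bw)'\in\Hd{\alpha-2}$; likewise $qw\in L^2(D)\subset\Hd{\alpha-2}$ and $f\in L^2(D)\subset\Hd{\alpha-2}$, so $\widetilde f\in\Hd{\alpha-2}=\big(\Hd{2-\alpha}\big)^*$ with $\|\widetilde f\|_{\Hd{\alpha-2}}\le c\|f\|_{L^2(D)}$. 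Then, exactly as in the $b,q\equiv0$ computation preceding the theorem (with $f$ replaced by the negative-order datum $\widetilde f$ and using the duality extension of ${_x I_1^\alpha}$ together with the bound \eqref{int-weak-bound} with $\gamma\in(0,1/2)$ chosen so that $2-\alpha<\gamma$), we get the solution representation $w=-{_x I_1^{\alpha}}\widetilde f+c_{\widetilde f}(1-x)^{\alpha-1}$ with $|c_{\widetilde f}|\le c\|\widetilde f\|_{\Hd{\alpha-2}}$, where the constant is chosen to enforce the constraint $(w,x^{1-\alpha})=0$.

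Finally I would read off the regularity from this representation. By Theorem~\ref{thm:fracop}(c) (applied via \eqref{int-weak-bound}), ${_x I_1^{\alpha}}\widetilde f\in\Hdi1{\alpha-\gamma}$; since $\alpha>3/2$ one can choose $\gamma$ close enough to $1/2$ that $\alpha-\gamma>\alpha-1+\beta$ for the desired $\beta\in[2-\alpha,1/2)$, so ${_x I_1^{\alpha}}\widetilde f\in\Hdi1{\alpha-1+\beta}$. The boundary term $(1-x)^{\alpha-1}\in\Hdi1{\alpha-1+\beta}$ for $\beta<1/2$. Combining, $w\in\Hdi1{\alpha-1+\beta}$ with $\|w\|_{\Hdi1{\alpha-1+\beta}}\le c\|\widetilde f\|_{\Hd{\alpha-2}}\le c\|f\|_{L^2(D)}$, and together with the first step $w\in\Hd{\alpha-1}$, which finishes the proof.

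The main obstacle I anticipate is the careful handling of the negative-order datum $\widetilde f\in\Hd{\alpha-2}$: one must check that the duality extension of ${_x I_1^{\alpha}}$ built earlier in the section applies, that $(bw)'$ genuinely lands in $\Hd{\alpha-2}$ (this needs $\alpha-1\ne1/2$, which holds since $\alpha>3/2$, so Lemma~\ref{lem:Hsalg} is applicable to $bw$), and that the constraint-fixing constant $c_{\widetilde f}$ remains controlled when the datum is only in a negative-order space. Everything else is a routine transcription of the Riemann-Liouville adjoint argument, with the integral constraint $(w,x^{1-\alpha})=0$ playing the role of the homogeneous Dirichlet condition.
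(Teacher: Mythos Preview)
Your proposal is correct and follows essentially the same route the paper indicates (the paper gives no detailed proof, only the remark that ``the general case can be analyzed analogously to Theorem~\ref{thm:reg-RL-adj}''): obtain $w\in W$ from Theorem~\ref{thm:wp-adj-C}, rewrite the problem as $a(\varphi,w)=\langle\varphi,\widetilde f\rangle$ with $\widetilde f=f+(bw)'-qw\in\Hd{\alpha-2}$, use the explicit representation $w=-{_xI_1^\alpha}\widetilde f+c_{\widetilde f}(1-x)^{\alpha-1}$ with the constant fixed by the integral constraint, and read off the regularity via \eqref{int-weak-bound}. One small slip: you write $W\subset\Hd{\alpha-1}$, but in fact $W\subset\Hdi1{\alpha-1}$ (elements of $W$ need not vanish at $x=0$); this does not affect your argument since all the required bounds and the product estimate for $bw$ work equally well in $\Hdi1{\alpha-1}$.
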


\begin{remark}
The adjoint problem for both derivatives is of
Riemann-Liouville type, but with different constraints: the Riemann-Liouville case
involves $w(0)=0$, whereas the Caputo case
$(w,x^{1-\alpha})=0$. Hence, their regularity pickup is identical.
\end{remark}

\section{Finite element approximation}\label{sec:fem}

Now we apply the variational formulations
to the numerical approximation of problem \eqref{eqn:fde}. We shall develop novel FEMs
using continuous piecewise linear finite elements and ``shifted'' fractional powers
$(x_{i}-x)_+^{\alpha-1}$ for the trial and test space, respectively, analyze their stability,
and derive optimal error estimates for the approximation in the $L^2(D)$ and $H^1(D)$ norms.

\subsection{Finite element spaces and their approximation properties}
First we introduce the finite element spaces based on a uniform partition of the domain $D$,
with nodes $0=x_0<x_1<\ldots<x_m=1$ and a mesh size $h=1/m$. We then define $U_h$ to be the
set of functions in $U$ which are linear when restricted to the subintervals $[x_i,x_{i+1}]$,
$i=0,\ldots,m-1$, i.e.,
\begin{equation*}
   U_h = \left\{\psi_h \in U:~~ \psi_h=ax+b,~~~x\in [x_i,x_{i+1}]\right\} .
\end{equation*}

To define the test spaces, we first introduce ``shifted'' fractional powers
\begin{equation*}
 \varphi_i(x) = \left \{
    \begin{tabular}{cc}
     $(x_{i}-x)^{\alpha-1}$ & $ x \le x_{i} $ \\
     $ 0  $                & $ x > x_{i} $
    \end{tabular} \right \}
    :=(x_{i}-x)^{\alpha-1}\chi_{[0,x_{i}]}(x),\quad i= 1,2,\ldots,m,
\end{equation*}
where $\chi_S$ denotes the characteristic function of a set $S$. The basis functions
$\varphi_i(x)$ can be written as $\varphi_i(x)=\Gamma(\alpha){_x\hspace{-0.3mm}I_1^{\alpha-1}}
\chi_{[0,x_{i}]}(x)$, i.e., the fractional derivative $\DDR 1 {\alpha-1} \varphi_i$
is piecewise constant. Clearly, $\varphi_i \in \Hdi1{\alpha -1+\beta}$ for any $\beta\in
[2-\alpha,1/2)$. Then we define the following finite-dimensional subspaces $V_h\subset
V$ and $W_h\subset W$
\begin{equation*}
V_h=\text{span}\{\varphi_i\}_{i=1}^{m}\cap V \quad \text{and} \quad
W_h=\text{span}\{\varphi_i\}_{i=1}^{m}\cap W,
\end{equation*}
as the test space for the Riemann-Liouville and Caputo derivative, respectively.

Next we introduce two operators $P_V:L^2(D)\to L^2(D)$ and $P_W:L^2(D)\to L^2(D)$,
respectively, by: for any $\psi\in L^2(D)$,
\begin{equation*}
  P_V \psi = \psi - \Gamma(\alpha)({_x\hspace{-0.3mm}I_1^{\alpha-1}}\psi)(0) \quad\mbox{and}\quad
  P_W \psi = \psi - ({_x\hspace{-0.3mm}I_1^{\alpha-1}}\psi,x^{1-\alpha})/\Gamma(2-\alpha).
\end{equation*}
\begin{lemma}\label{lem:PvPw}
The operators $P_V$ and $P_W$ are bounded. Further, for $\psi\in \Hd 1$, $P_V{\DDR 1 {\alpha-1} \psi}={\DDR 1 {\alpha-1} \psi}$,
and for $\psi\in W\cap \Hdi 1 1$, $P_W{\DDR 1 {\alpha-1} \psi}={\DDR 1 {\alpha-1} \psi}$.
\end{lemma}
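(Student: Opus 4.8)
The plan is to treat the two assertions separately. The boundedness of $P_V$ and $P_W$ is elementary: each operator differs from the identity on $L^2(D)$ only by subtraction of a constant function whose value is a bounded linear functional of $\psi$, so it suffices to estimate those functionals. The invariance of fractional-derivative functions under $P_V$ (resp.\ $P_W$) will follow from the single identity
\begin{equation*}
  {_x\hspace{-0.3mm}I_1^{\alpha-1}}\,{\DDR1{\alpha-1}}\psi = \psi(\cdot)-\psi(1)\qquad\text{for }\psi\in\Hdi11,
\end{equation*}
obtained by combining Lemma~\ref{l:caprl} with the semigroup property of Theorem~\ref{thm:fracop}(a), together with the boundary/constraint conditions encoded in $\Hd1$ and $W$.

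For the boundedness, I would first apply Theorem~\ref{thm:fracop}(c) with $s=0$ to obtain $\|{_x\hspace{-0.3mm}I_1^{\alpha-1}}\psi\|_{\Hdi1{\alpha-1}}\le c\|\psi\|_{L^2(D)}$. Since $\alpha-1>1/2$ --- this is the one place where $\alpha>3/2$ enters --- the embedding $\Hdi1{\alpha-1}\hookrightarrow C(\overline D)$ makes the point value $({_x\hspace{-0.3mm}I_1^{\alpha-1}}\psi)(0)$ well defined and bounded by $c\|\psi\|_{L^2(D)}$, whence $\|P_V\psi\|_{L^2(D)}\le\|\psi\|_{L^2(D)}+\Gamma(\alpha)\,|({_x\hspace{-0.3mm}I_1^{\alpha-1}}\psi)(0)|\le c\|\psi\|_{L^2(D)}$. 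For $P_W$ I would instead bound the scalar $|({_x\hspace{-0.3mm}I_1^{\alpha-1}}\psi,x^{1-\alpha})|\le\|{_x\hspace{-0.3mm}I_1^{\alpha-1}}\psi\|_{L^\infty(D)}\,\|x^{1-\alpha}\|_{L^1(D)}$, which is finite because $\alpha<2$ (so $x^{1-\alpha}\in L^1(D)$) and again $\le c\|\psi\|_{L^2(D)}$ by the same $\Hdi1{\alpha-1}\hookrightarrow L^\infty(D)$ embedding; this gives $\|P_W\psi\|_{L^2(D)}\le c\|\psi\|_{L^2(D)}$.

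For the invariance, note first that ${\DDR1{\alpha-1}}\psi\in L^2(D)$ by Theorem~\ref{thm:fracop}(b) whenever $\psi\in\Hdi1{\alpha-1}$, so $P_V$ and $P_W$ can legitimately be applied to it. To prove the displayed identity, write ${\DDR1{\alpha-1}}\psi=-{_x\hspace{-0.3mm}I_1^{2-\alpha}}\psi'$ by Lemma~\ref{l:caprl}, then use the semigroup property to get ${_x\hspace{-0.3mm}I_1^{\alpha-1}}\,{\DDR1{\alpha-1}}\psi=-{_x\hspace{-0.3mm}I_1^{1}}\psi'$, and finally $({_x\hspace{-0.3mm}I_1^{1}}\psi')(x)=\int_x^1\psi'\,dt=\psi(1)-\psi(x)$. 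If $\psi\in\Hd1$, then $\psi\in\Hdi11$ and $\psi(0)=\psi(1)=0$, so evaluating the identity at $x=0$ yields $({_x\hspace{-0.3mm}I_1^{\alpha-1}}\,{\DDR1{\alpha-1}}\psi)(0)=0$, and therefore $P_V{\DDR1{\alpha-1}}\psi={\DDR1{\alpha-1}}\psi$. If $\psi\in W\cap\Hdi11$, then $\psi(1)=0$ (membership in $\Hdi11$) and $(\psi,x^{1-\alpha})=0$ (membership in $W$), so the identity reduces to ${_x\hspace{-0.3mm}I_1^{\alpha-1}}\,{\DDR1{\alpha-1}}\psi=\psi$, whence $({_x\hspace{-0.3mm}I_1^{\alpha-1}}\,{\DDR1{\alpha-1}}\psi,x^{1-\alpha})=(\psi,x^{1-\alpha})=0$ and thus $P_W{\DDR1{\alpha-1}}\psi={\DDR1{\alpha-1}}\psi$.

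There is no genuine obstacle here; the only two points that need a little care are (i) that, after applying the smoothing operator ${_x\hspace{-0.3mm}I_1^{\alpha-1}}$, point evaluation at $0$ is a bounded functional on $L^2(D)$, which again rests on $\alpha-1>1/2$; and (ii) the verification that the hypotheses of Lemma~\ref{l:caprl} are met in each case --- i.e.\ $\Hd1\subset\Hdi11$ and $W\cap\Hdi11\subset\Hdi11$ --- and that the endpoint value $\psi(1)=0$ is indeed supplied by the relevant space.
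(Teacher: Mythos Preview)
Your proposal is correct and follows essentially the same route as the paper: boundedness via estimating the scalar $({_xI_1^{\alpha-1}}\psi)(0)$ (resp.\ the pairing with $x^{1-\alpha}$), and invariance via the identity ${_xI_1^{\alpha-1}}\,{\DDR1{\alpha-1}}\psi=\psi$ obtained from Lemma~\ref{l:caprl} plus the semigroup property. You are somewhat more explicit than the paper in justifying the point evaluation through the embedding $\Hdi1{\alpha-1}\hookrightarrow C(\overline D)$ (using $\alpha-1>1/2$) and in writing out the intermediate formula $\psi(\cdot)-\psi(1)$ before invoking $\psi(1)=0$, but the argument is the same in substance.
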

\begin{proof}
Let $\delta=\alpha-1$. For any $\psi\in L^2(D)$, the boundedness of $P_V$ follows from
\begin{equation*}
  \| P_V \psi\|_{L^2(D)} \le \|\psi\|_{L^2(D)} + c|({_x\hspace{-0.3mm}I_1^{\delta}}\psi)(0)|
  \le c\| \psi \|_{L^2(D)},
\end{equation*}
The boundedness of $P_W$ is similar.
For $\psi\in \Hd 1$, with $\varphi_\psi={\DDR 1 \delta}\psi$, by Lemma \ref{l:caprl}
and Theorem \ref{thm:fracop}(a), ${_x\hspace{-0.3mm}I_1^{\delta}}\varphi_\psi =-
{_x\hspace{-0.3mm}I_1^{\delta}}({_x\hspace{-0.3mm}I_1^{1-\delta}}\psi)'
= -({_x\hspace{-0.3mm}I_1^{1}}\psi)'=\psi,$
and thus $({_x\hspace{-0.3mm}I_1^\delta}\varphi_\psi)(0)=0$, which yields
$P_V \varphi_\psi = \varphi_\psi - \Gamma(\delta+1)({_x\hspace{-0.3mm}I_1^{\delta}}\varphi_\psi)(0) = \varphi_\psi$.
Likewise, for $\psi\in W\cap \Hdi 1 1$,
$({_x\hspace{-0.3mm}I_1^{\delta}} \varphi_\psi,x^{-\delta}) = (\psi,x^{-\delta})=0$, which gives directly
$P_W \varphi_\psi = \varphi_\psi$.
\end{proof}

Now we can state important approximation properties of these spaces.
\begin{lemma}\label{fem-interp-U}
Let the mesh be quasi-uniform and $1\leq \gamma \leq 2$, and $\delta=\alpha-1\in (1/2,1)$.
If $u \in H^\gamma(D) \cap \Hd{1}$, then
\begin{equation*}
\inf_{\psi_h \in U_h} \| u - \psi_h \|_U \le ch^{\gamma - 1} \|u\|_{H^\gamma(D)}.
\end{equation*}
Further, if $u \in \Hdi1{\gamma}\cap V$, then
\begin{equation*}
\inf_{\psi_h \in V_h} 
\| u - \psi_h\|_V
\le ch^{\min(1,\gamma-\delta)} \|u\|_{H^\gamma(D)}.
\end{equation*}
Similarly, if $u \in \Hdi1{\gamma}\cap W$, then
\begin{equation*}
\inf_{\psi_h \in W_h} \|u-\psi_h\|_W 
\le ch^{\min(1,\gamma -\delta)} \|u\|_{H^\gamma(D)}.
\end{equation*}
\end{lemma}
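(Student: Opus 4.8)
The plan is to establish each of the three estimates by exhibiting a concrete interpolant and bounding the error directly. For the first estimate, I would take $\psi_h = \Pi_h u$ to be the standard continuous piecewise linear Lagrange interpolant of $u$ on the quasi-uniform mesh. Since $u\in H^\gamma(D)\cap\Hd 1$ with $1\le\gamma\le 2$ and $u(0)=u(1)=0$, the interpolant $\Pi_h u$ lies in $U_h\subset U$, and the classical interpolation error estimate on each subinterval, summed over the mesh, gives $\|u-\Pi_h u\|_{H^1(D)}\le c h^{\gamma-1}\|u\|_{H^\gamma(D)}$; since $\|\cdot\|_U=\|\cdot\|_{\Hd 1}$ is equivalent to $\|\cdot\|_{H^1(D)}$ on functions vanishing at the endpoints, this is exactly the claim. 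This first part is entirely routine.

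For the second estimate, the key is to transport the interpolation problem in $V_h$ back to an interpolation problem in $U_h$ via the fractional operators, using Lemma~\ref{lem:PvPw}. The crucial observation is that $\DDR 1 {\alpha-1}\varphi_i$ is piecewise constant on the mesh (as noted when the $\varphi_i$ were introduced), so $\DDR 1 {\alpha-1}V_h$ is contained in the space of piecewise constants, and conversely, given any $u\in\Hdi1\gamma\cap V$, the function $\DDR 1{\alpha-1}u = -{_x\hspace{-0.3mm}I_1^{2-\alpha}}u' \in \Hdi1{\gamma-\delta}$ by Lemma~\ref{l:caprl} and Theorem~\ref{thm:fracop}(c), with $\delta=\alpha-1$. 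I would define $\psi_h\in V_h$ by requiring $\DDR 1{\alpha-1}\psi_h = \bar P_h(\DDR 1{\alpha-1}u)$, where $\bar P_h$ is the piecewise-constant $L^2$-projection (or the piecewise-constant interpolant at midpoints); the point of $P_V$ in Lemma~\ref{lem:PvPw} is that $P_V$ acts as the identity on $\DDR 1 {\alpha-1}(\Hd 1)$, which pins down the correct additive normalization so that $\psi_h$ genuinely lands in $V_h$ and $u-\psi_h$ lies in the right space. Then $\|u-\psi_h\|_V$ is controlled (using the boundedness of the relevant fractional operators, Theorem~\ref{thm:fracop}) by $\|\DDR 1{\alpha-1}u - \bar P_h(\DDR 1{\alpha-1}u)\|_{L^2(D)}$, and the standard $L^2$ estimate for piecewise-constant approximation of a function in $H^{\gamma-\delta}(D)$ gives $c h^{\min(1,\gamma-\delta)}\|\DDR 1{\alpha-1}u\|_{H^{\gamma-\delta}(D)}\le c h^{\min(1,\gamma-\delta)}\|u\|_{H^\gamma(D)}$, where the cap at $h^1$ reflects that piecewise-constant approximation saturates at first order.

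The third estimate, for $W_h$, follows the same scheme with $P_W$ in place of $P_V$: here the normalization constant is chosen so that the integral constraint $(\psi_h,x^{1-\alpha})=0$ defining $W$ is respected, and Lemma~\ref{lem:PvPw} again guarantees that this adjustment is invisible on the relevant image space, so the error bound is unchanged. The main obstacle I anticipate is the bookkeeping in the second and third parts: one must check carefully that the interpolant $\psi_h$ constructed through the fractional operators actually belongs to $V_h$ (respectively $W_h$) — i.e. that it is a genuine linear combination of the $\varphi_i$ with the right boundary/constraint behavior — and that the norm equivalences used (between $\|\cdot\|_V$ and $\|\DDR 1{\alpha-1}\cdot\|_{L^2}$ modulo the singular function $(1-x)^{\alpha-1}$, and likewise for $W$) are applied to functions for which they are valid. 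Once the diagram "$V_h \leftrightarrow$ piecewise constants" is set up cleanly via Lemmas~\ref{l:caprl} and \ref{lem:PvPw} and Theorem~\ref{thm:fracop}, the analytic content reduces to the classical approximation theory of piecewise constants and piecewise linears, plus the regularity shift $u\in H^\gamma\Rightarrow \DDR1{\alpha-1}u\in H^{\gamma-\delta}$.
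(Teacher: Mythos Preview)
Your proposal is correct and follows essentially the same route as the paper: Lagrange interpolation for $U_h$, and for $V_h$ (resp.\ $W_h$) the construction $\psi_h = {_x I_1^{\alpha-1}}\bigl(P_V \Pi_0 (\DDR1{\alpha-1}u)\bigr)$ with $\Pi_0$ the elementwise mean, together with Lemma~\ref{lem:PvPw} to guarantee $P_V\varphi_u=\varphi_u$ so that the error reduces to $\|P_V(\varphi_u-\Pi_0\varphi_u)\|_{L^2(D)}\le c\|\varphi_u-\Pi_0\varphi_u\|_{L^2(D)}$, and finally the regularity shift $\varphi_u\in \Hdi1{\gamma-\delta}$ from Theorem~\ref{thm:fracop}(c). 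The only cosmetic difference is that the paper writes the interpolant explicitly as ${_xI_1^{\alpha-1}}(P_V\Pi_0\varphi_u)$ rather than characterizing it implicitly by its $\DDR1{\alpha-1}$-image, which makes the membership $\psi_h\in V_h$ immediate.
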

\begin{proof}
Let $\Pi_h u \in U_h$ be the standard Lagrange interpolant of $u\in \Hd 1$ so that
\begin{equation*}
 \inf_{\psi_h \in U_h} \| u -\psi_h \|_U \le \| u - \Pi_h u \|_U.
\end{equation*}
The first estimate follows from the approximation
property of $\Pi_h$ \cite[Corollary 1.109, pp. 61]{ern-guermond}.
Next we consider the space $V_h$. For $u\in \Hdi1 {\gamma}$,
by Theorem \ref{thm:fracop}(c), we have
\begin{equation}\label{eqn:fyu}
    \| ({_x\hspace{-0.3mm}I_1^{1-\delta}}u)' \|_{\Hdi1{\gamma-\delta} } \le c\| {_x\hspace{-0.3mm}I_1^{1-\delta}}u \|_{\Hdi1{\gamma+1-\delta} }
    \le c\| u \|_{\Hdi1 {\gamma}}.
\end{equation}
Hence $ \varphi_u:=-({_x\hspace{-0.3mm}I_1^{1-\delta}}u)'= {\DDR1{\delta}}u $ belongs to $\Hdi1 {\gamma-\delta}$.
On the space $L^2(D)$, we define a projection operator $\Pi_0:L^2(D)\to L^2(D)$ by (with $h_i=x_{i+1}-x_i$)
\begin{equation}\label{l2-project}
  \Pi_0 \psi (x) = \frac{1}{h_i}\int_{x_i}^{x_{i+1}}  \psi(s) \,ds \quad x\in (x_i, x_{i+1}].
\end{equation}
By the definitions of $\Pi_{0}$ and $P_V$,  ${_x\hspace{-0.3mm}I_1^{\delta}}
(P_V\Pi_{0}\psi) \in V_h$ for any $\psi \in L^2(D)$. Then using \eqref{eqn:fyu}, the
property of $\Pi_0$ and Lemma \ref{lem:PvPw}, we deduce
\begin{equation*}
    \begin{split}
    \inf_{\psi_h\in V_h}\|u-\psi_h\|_V &\leq  \|  \DDR1{\delta}(u  -{_x\hspace{-0.3mm}I_1^{\delta}}(P_V\Pi_{0}\varphi_u))\|_{L^2(D)}
    =  \|  \varphi_u - P_V \Pi_{0}\varphi_u\|_{L^2(D)}\\
    &=  \| P_V( \varphi_u - \Pi_{0}\varphi_u)\|_{L^2(D)}
    \le  c \|\varphi_u - \Pi_{0}\varphi_u\|_{L^2(D)}  \\
    &   \le    c h^{\min(1,\gamma-\delta)}\| \varphi_u \|_{H^{\gamma-\delta}(D)}
    \le c h^{\min(1,\gamma-\delta)}\| u \|_{\Hdi1 {\gamma}}.
     \end{split}
\end{equation*}
Last, by the definitions of $\Pi_{0}$ and $P_W$, ${_x\hspace{-0.3mm}I_1^{\delta}}(P_W\Pi_{0}\psi)
\in W_h$ for any $\psi \in L^2(D)$. Now the $L^2(D)$ stability of $P_W$ and the identity $P_W\varphi_u
=\varphi_u$ from Lemma \ref{lem:PvPw} yield
\begin{equation*}
    \begin{split}
    \inf_{\psi_h\in W_h}\|u-\psi_h\|_W & \leq
    \|  \DDR1{\delta}(u  -{_x\hspace{-0.3mm}I_1^{\delta}}(P_W\Pi_{0}\varphi_u))\|_{L^2(D)}
    =  \| P_W( \varphi_u - \Pi_{0}\varphi_u)\|_{L^2(D)}\\
  &\le c\|\varphi_u - \Pi_{0}\varphi_u\|_{L^2(D)}\le ch^{\min(1,\gamma-\delta)}\| u \|_{\Hdi1{\gamma}}.
  \end{split}
\end{equation*}
\end{proof}

\subsection{Error estimates in the Riemann-Liouville case}
The finite element problem is: given any $F\in V^*$, find $u_h\in U_h$ such that
\begin{equation}\label{gal:rl}
A(u_h,\varphi_h) = \langle F,\varphi_h\rangle\quad \forall \varphi_h\in V_h.
\end{equation}
We shall establish optimal error estimates for the approximation
$u_h$ for $\langle F,v\rangle\equiv (f,v) $ with $f\in L^2(D)$, using several technical lemmas.

A first lemma shows the stability of problem \eqref{gal:rl} when $b,q\equiv0$.

\begin{lemma}\label{lem:inf-sup-disc-rl-triv}
For the bilinear form $a(\cdot,\cdot)$ in \eqref{eqn:a}, there holds
\begin{equation}\label{discr-inf-sup-triv}
   \sup_{\varphi_h \in V_h} \frac{a(\psi_h,\varphi_h)}{ \|\varphi_h\|_{V}} \ge c\|\psi_h\|_U \quad \forall \psi_h\in U_h,
\end{equation}
and the finite element problem: Find $u_h\in U_h$ such that
\begin{equation}\label{eqn:discp-triv-fem}
a(u_h,\varphi_h)=(f,\varphi_h) \quad \forall \varphi_h\in V_h,
\end{equation}
has a unique solution.
\end{lemma}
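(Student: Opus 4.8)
The plan is to prove the discrete inf-sup bound \eqref{discr-inf-sup-triv} by exhibiting, for each $\psi_h\in U_h$, an explicit test function $\varphi_{\psi_h}\in V_h$ that realises the supremum up to a fixed constant; this is the exact discrete analogue of the test function used in the proof of Lemma \ref{lem:inf-sup-a}. Unique solvability of \eqref{eqn:discp-triv-fem} then comes for free: the spaces satisfy $\dim U_h=\dim V_h=m-1$ (the only constraint cutting $V_h$ out of $\mathrm{span}\{\varphi_i\}_{i=1}^m$ is the single linear condition $\varphi(0)=0$), so \eqref{discr-inf-sup-triv} says that the square stiffness matrix of $a(\cdot,\cdot)$ is injective, hence invertible, and the discrete problem has a unique solution for every right-hand side.

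For the construction, put $\delta=\alpha-1\in(1/2,1)$ and, for $\psi_h\in U_h$, set
\[
  \varphi_{\psi_h}:=-\,{}_xI_1^{\delta}(P_V\psi_h')
  \;=\;-\,{}_xI_1^{\delta}\psi_h'+({}_xI_1^{\delta}\psi_h')(0)\,(1-x)^{\alpha-1},
\]
where $P_V$ is from Lemma \ref{lem:PvPw}; the second expression exhibits this as precisely the test function of Lemma \ref{lem:inf-sup-a} applied to $u=\psi_h$. I would verify three properties. First, $\varphi_{\psi_h}\in V_h$: since $\psi_h$ is piecewise linear, $\psi_h'$ is piecewise constant, hence $P_V\psi_h'\in\mathrm{span}\{\chi_{[0,x_i]}\}_{i=1}^m$, and because ${}_xI_1^{\delta}\chi_{[0,x_i]}=\Gamma(\alpha)^{-1}\varphi_i$ we get $\varphi_{\psi_h}\in\mathrm{span}\{\varphi_i\}_{i=1}^m$; moreover $\varphi_{\psi_h}(1)=0$ automatically (every right-sided fractional integral vanishes at $x=1$), while $\varphi_{\psi_h}(0)=0$ by the defining property of $P_V$ — the same computation that shows ${}_xI_1^{\delta}(P_V\Pi_0\psi)\in V_h$ in the proof of Lemma \ref{fem-interp-U} — so $\varphi_{\psi_h}\in\Hd{\alpha-1}=V$, hence $\varphi_{\psi_h}\in V_h$. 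Second, $\|\varphi_{\psi_h}\|_V\le c\|\psi_h\|_U$: this is verbatim the estimate in Lemma \ref{lem:inf-sup-a}, using Theorem \ref{thm:fracop}(c) together with the $L^2(D)$-boundedness of $P_V$ and of $\psi\mapsto({}_xI_1^{\delta}\psi)(0)$ (the latter by Cauchy--Schwarz, since $\alpha>3/2$ forces $x^{\alpha-2}\in L^2(D)$).

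Third, and this furnishes the lower bound: by the consistency relation $\DDR1{\delta}\,{}_xI_1^{\delta}=\mathrm{id}$ one has $\DDR1{\delta}\varphi_{\psi_h}=-P_V\psi_h'=-\psi_h'+c_{\psi_h}$ with $c_{\psi_h}$ constant, so
\[
  a(\psi_h,\varphi_{\psi_h})=-(\psi_h',\DDR1{\delta}\varphi_{\psi_h})=\|\psi_h'\|_{L^2(D)}^2-c_{\psi_h}(\psi_h',1)=\|\psi_h'\|_{L^2(D)}^2\ge c\|\psi_h\|_U^2,
\]
using $(\psi_h',1)=\psi_h(1)-\psi_h(0)=0$ and Poincar\'e's inequality. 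Combining the three properties,
\[
  \sup_{\varphi_h\in V_h}\frac{a(\psi_h,\varphi_h)}{\|\varphi_h\|_V}\ \ge\ \frac{a(\psi_h,\varphi_{\psi_h})}{\|\varphi_{\psi_h}\|_V}\ \ge\ c\|\psi_h\|_U,
\]
which is \eqref{discr-inf-sup-triv}; unique solvability of \eqref{eqn:discp-triv-fem} follows by the dimension count above.

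The only genuinely new ingredient is the observation that the continuous inf-sup test function lands in the discrete test space $V_h$ as soon as the trial function is discrete; everything else transcribes the proof of Lemma \ref{lem:inf-sup-a}. The main obstacle is therefore to make that membership airtight: to check that ${}_xI_1^{\delta}$ maps piecewise-constant data exactly onto $\mathrm{span}\{\varphi_i\}_{i=1}^m$ and that the correction encoded in $P_V$ is precisely the one that restores membership in $V=\Hd{\alpha-1}$ (it removes the value at $x=0$ without creating one at $x=1$). This is exactly the design feature of the ``shifted fractional power'' test space, so the verification is short, but it is the crux.
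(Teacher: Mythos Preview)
Your proof is correct and is essentially the paper's own argument: the same test function $\varphi_h=-{_xI_1^{\alpha-1}}(P_V\psi_h')$ is used, the same computation $a(\psi_h,\varphi_h)=(\psi_h',P_V\psi_h')=\|\psi_h'\|_{L^2(D)}^2$ is carried out via $(\psi_h',1)=0$, and the bound $\|\varphi_h\|_V\le c\|\psi_h\|_U$ follows from the $L^2$-boundedness of $P_V$. Your explicit verification that $\varphi_{\psi_h}\in V_h$ (via $P_V\psi_h'\in\mathrm{span}\{\chi_{[0,x_i]}\}$) and the dimension count $\dim U_h=\dim V_h=m-1$ are spelled out more fully than in the paper, but add no new idea.
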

\begin{proof}
For any $\psi_h\in U_h$, let $\varphi_h=-{_x\hspace{-0.3mm}I_1^{\alpha-1}}(P_V \psi_h')$.
By Theorem \ref{thm:fracop}, $\varphi_h \in V_h$ and
\begin{equation*}
\begin{split}
  a(\psi_h,\varphi_h)&=-\left(\psi_h', ~\DDR1{\alpha-1}
  \left(-{_x\hspace{-0.3mm}I_1^{\alpha-1}}(P_V \psi_h')\right)\right)
  = (\psi_h',P_V \psi_h')\\
   &= (\psi_h', \psi_h') -  \Gamma(\alpha)({_x\hspace{-0.3mm}I_1^{\alpha-1}}\psi_h')(0) (\psi_h',1)
   =\| \psi_h\|_U^2.
\end{split}
\end{equation*}
Further, by Lemma \ref{lem:PvPw} and $\varphi_h(0)=0$, there holds
\begin{equation*}
 \| \varphi_h  \|_V = \| {_x\hspace{-0.3mm}I_1^{\alpha-1}}(P_V \psi_h') \|_V
  \le c\| P_V \psi_h' \|_{L^2(D)} \le c\| \psi_h \|_U.
\end{equation*}
Thus the condition \eqref{discr-inf-sup-triv} holds. Since the stiffness matrix is
square, the existence follows from uniqueness, which is a direct consequence of \eqref{discr-inf-sup-triv}, and
thus problem \eqref{eqn:discp-triv-fem} has a unique solution $u_h\in U_h$.
\end{proof}

Next, we introduce the (adjoint) Ritz projection $R_h: V\to V_h$ defined by
\begin{equation*}
a(\psi_h,R_h\varphi)=a(\psi_h,\varphi) \quad \forall \psi_h \in U_h.
\end{equation*}

\begin{lemma}\label{lem:Ritz}
The operator $R_h$ is well-defined and satisfies for any $\beta\in(2-\alpha,1/2)$
\begin{equation}\label{eqn:Ritz-prop}
  \begin{aligned}
    \|R_h \varphi\|_V & \le c\| \varphi\|_V,\\
    \| \varphi- R_h \varphi \|_{L^{2}(D)} & \le c h^{\alpha-2+\beta}   \| \varphi \|_V.
  \end{aligned}
\end{equation}
\end{lemma}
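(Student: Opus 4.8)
The plan is to establish well-posedness of $R_h$ via the discrete inf-sup condition, then obtain the two estimates in \eqref{eqn:Ritz-prop} by a standard energy/duality argument. For well-posedness: the operator $R_h\varphi$ is defined by the square linear system $a(\psi_h, R_h\varphi) = a(\psi_h,\varphi)$ for all $\psi_h\in U_h$, so I only need uniqueness, i.e.\ that $a(\psi_h,\eta_h)=0$ for all $\psi_h\in U_h$ forces $\eta_h=0$ in $V_h$. This is precisely the ``dual'' half of the discrete stability. To get it, for a fixed $\eta_h\in V_h\subset V$ I would choose the test function $u_{\eta_h}={_x\hspace{-0.3mm}I_1^{2-\alpha}}\eta_h - ({_x\hspace{-0.3mm}I_1^{2-\alpha}}\eta_h)(0)(1-x)$ exactly as in the continuous argument right after Lemma \ref{lem:inf-sup-a}; since $\DDR1{\alpha-1}\eta_h$ is piecewise constant, $u_{\eta_h}$ is continuous piecewise linear, hence lies in $U_h$, and $a(u_{\eta_h},\eta_h)=\|u_{\eta_h}\|_U^2$. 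Combined with the primal discrete inf-sup \eqref{discr-inf-sup-triv} of Lemma \ref{lem:inf-sup-disc-rl-triv}, this gives that $R_h$ is well-defined together with the stability bound $\|R_h\varphi\|_V\le c\|\varphi\|_V$ (Galerkin orthogonality plus the discrete inf-sup applied to $R_h\varphi-\psi_h$ for an arbitrary $\psi_h\in V_h$, then taking the infimum and using $R_h$-invariance to absorb).

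For the $L^2(D)$ estimate I would use a duality (Aubin–Nitsche) argument on the adjoint/primal pairing. Given $\varphi-R_h\varphi$, let $g = \varphi - R_h\varphi \in L^2(D)$ and introduce the auxiliary problem of finding $z\in U$ with $a(z,\psi)=(g,\psi)$ for all $\psi\in V$ — wait, more precisely I want $(\varphi-R_h\varphi, g)$ represented through $a$: since $R_h$ projects on the \emph{second} argument of $a$, I should solve the primal problem $a(z,\phi)=(\,\cdot\,,\phi)$ so that testing with $\phi=\varphi-R_h\varphi$ and using Galerkin orthogonality $a(\psi_h,\varphi-R_h\varphi)=0$ for $\psi_h\in U_h$ gives
\begin{equation*}
\|\varphi-R_h\varphi\|_{L^2(D)}^2 = (g,\varphi-R_h\varphi) = a(z,\varphi-R_h\varphi) = a(z-\psi_h,\varphi-R_h\varphi)
\end{equation*}
for any $\psi_h\in U_h$. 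Then $\|\varphi-R_h\varphi\|_{L^2(D)}^2 \le c\|z-\psi_h\|_U\|\varphi-R_h\varphi\|_V$, and I choose $\psi_h$ to be the best $U_h$-approximation to $z$. By Theorem \ref{thm:reg-RL} (with $b,q\equiv0$) the solution $z$ has the regularity $z\in\Hdi0{\alpha-1+\beta}\subset H^{\alpha-1+\beta}(D)$ with $\|z\|_{H^{\alpha-1+\beta}(D)}\le c\|g\|_{L^2(D)}$, so Lemma \ref{fem-interp-U} with $\gamma=\alpha-1+\beta\in(1/2,3/2)$... hmm, here $\gamma$ may be below $1$, so I would instead invoke the interpolation-error bound in the form $\inf_{\psi_h}\|z-\psi_h\|_U\le ch^{\gamma-1}\|z\|_{H^\gamma(D)}$ valid for $\gamma\in(1/2,2]$ (the $U$-norm is the $H^1$ seminorm and fractional interpolation theory applies), giving the factor $h^{\alpha-2+\beta}$. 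Combining with $\|\varphi-R_h\varphi\|_V\le \|\varphi\|_V+\|R_h\varphi\|_V\le c\|\varphi\|_V$ from the first estimate yields $\|\varphi-R_h\varphi\|_{L^2(D)}\le ch^{\alpha-2+\beta}\|\varphi\|_V$.

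The main obstacle I anticipate is the low-regularity interpolation step: since $\alpha\in(3/2,2)$, the exponent $\alpha-1+\beta$ with $\beta\in(2-\alpha,1/2)$ lies strictly between $1/2$ and $3/2$, possibly below $1$, so the standard Lagrange interpolation estimate of Lemma \ref{fem-interp-U} (stated for $1\le\gamma\le2$) does not directly apply and one needs the quasi-interpolation / fractional Sobolev interpolation estimate $\|z-\Pi_h z\|_{H^1(D)}\le ch^{\gamma-1}\|z\|_{H^\gamma(D)}$ for $\gamma\in(1/2,1]$, which is classical but must be cited carefully (and requires $\beta>2-\alpha$ strictly, explaining the open interval in the hypothesis). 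A secondary technical point is checking that the duality solution $z$ genuinely lies in $\Hdi0{\alpha-1+\beta}$ rather than just $\Hd1$, which is exactly the sharp regularity pickup of Theorem \ref{thm:reg-RL} applied with vanishing lower-order coefficients, so no extra work is needed there.
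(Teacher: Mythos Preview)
Your approach is essentially the paper's: establish the dual discrete inf-sup on $V_h$ via the test function $u_{\eta_h}={_x I_1^{2-\alpha}}\eta_h-({_x I_1^{2-\alpha}}\eta_h)(0)(1-x)\in U_h$, deduce well-definedness and the stability $\|R_h\varphi\|_V\le c\|\varphi\|_V$, then run Aubin--Nitsche with the primal problem $a(g,\cdot)=(\varphi-R_h\varphi,\cdot)$ and invoke Theorem~\ref{thm:reg-RL} plus Lemma~\ref{fem-interp-U}.

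Your ``main obstacle'' is a phantom, however: since the lemma assumes $\beta\in(2-\alpha,1/2)$ with strict inequality $\beta>2-\alpha$, the regularity index satisfies $\gamma=\alpha-1+\beta>\alpha-1+(2-\alpha)=1$, so $\gamma\in(1,3/2)$ and Lemma~\ref{fem-interp-U} applies verbatim---no low-regularity quasi-interpolation is needed. (This is exactly why the hypothesis takes the open interval rather than $[2-\alpha,1/2)$.) One minor clean-up: for the stability bound you do not need the primal inf-sup \eqref{discr-inf-sup-triv} at all; the dual inf-sup you just proved gives directly $\|R_h\varphi\|_V\le c\sup_{\psi_h}a(\psi_h,R_h\varphi)/\|\psi_h\|_U=c\sup_{\psi_h}a(\psi_h,\varphi)/\|\psi_h\|_U\le c\|\varphi\|_V$, which is how the paper argues.
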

\begin{proof}
For any $\varphi_h \in V_h$, let $\psi_h= {_x\hspace{-0.3mm}I_1^{2-\alpha}} \varphi_h -({_x\hspace{
-0.3mm}I_1^{2-\alpha}} \varphi_h) (0)(1-x)$. Then $\psi_h(0)=\psi_h(1)=0$, and
$\psi_h'= -{\DDR1{\alpha-1}}\varphi_h+({_x\hspace{-0.3mm}I_1^{2-\alpha}\varphi_h)(0)} $,
i.e., $\psi_h$ is the primitive of a piecewise constant function, hence $\psi_h \in U_h$.
Since $ \DDR1 {2-\alpha}\psi_h= \varphi_h - ({_x\hspace{-0.3mm}I_1^{2-\alpha}} \varphi_h) (0)(\DDR1{2-\alpha}(1-x))$
and $ {\varphi_h(0)}=0$, we deduce
$\DDR1 {2-\alpha}\psi_h(0)= c_0 ({_x\hspace{-0.3mm}I_1^{2-\alpha}} \varphi_h)(0)$, with $c_0=-(\DDR1{2-\alpha}(1-x))(0)\neq0.$
Thus the following bound holds
\begin{equation*}
   |({_x\hspace{-0.3mm}I_1^{2-\alpha}} \varphi_h)(0)| \le c |\DDR1 {2-\alpha}\psi_h(0)| = c |({_x\hspace{-0.3mm}I_1^{\alpha-1}}\psi_h')(0) |\le c\| \psi_h\|_U,
\end{equation*}
which directly yields
\begin{equation*}
  \| \varphi_h \|_{V} =  \|\DDR1{\alpha-1} \varphi_h \|_{L^2(D)} \le c\left(\| \psi_h'  \|_{L^2(D)} +c |({_x\hspace{-0.3mm}I_1^{2-\alpha}} \varphi_h)(0)|\right) \le c\| \psi_h\|_U.
\end{equation*}
This and the identity ${a(\psi_h,\varphi_h)={ \|\psi_h\|_U^2}}$ give the following discrete inf-sup condition
\begin{equation*}
   \sup_{\psi_h \in U_h}
  \frac{a(\psi_h,\varphi_h)}{ \|\psi_h\|_{U}}\geq c\|\varphi_h\|_V \quad \forall\varphi_h\in V_h,
\end{equation*}
which shows that $R_h$ is well-defined. Then the $\Hd{\alpha-1}$-stability of $R_h$ follows:
\begin{equation*}
      \|R_h \varphi\|_V \leq c\sup_{\psi_h \in U_h} \frac{a(\psi_h,R_h \varphi)}{ \|\psi_h\|_{U}}
         =c\sup_{\psi_h \in U_h} \frac{a(\psi_h,\varphi)}{ \|\psi_h\|_{U}} \le c\| \varphi \|_V.
\end{equation*}
Next let $g$ be the solution to \eqref{eqn:var-RL} with $F=\varphi-R_h\varphi$.
By Theorem \ref{thm:reg-RL}, $\|g\|_{\Hdi 0 {\alpha-1+\beta}}\leq c\|\varphi-R_h\varphi\|_{L^2(D)}$,  $\beta\in(2-\alpha,
1/2)$. Then Galerkin
orthogonality, and Lemma \ref{fem-interp-U} give
\begin{equation*}
  \begin{aligned}
    \| \varphi- R_h \varphi \|_{L^{2}(D)}^2 & = a(g,\varphi- R_h \varphi)
     \le c \inf_{\psi_h\in U_h}\|  g- \psi_h \|_U \| \varphi-R_h\varphi \|_V\\
    &\le ch^{\alpha-2+\beta} \|\varphi-R_h\varphi\|_{L^2(D)} \| \varphi \|_V.
  \end{aligned}
\end{equation*}
\end{proof}

The next result gives the stability for the discrete variational formulation in the
general case, using a kickback technique \cite{Schatz-1974, JinLazarovPasciak:2013a}.
\begin{lemma}\label{lem:disinfsup:riem}
Let Assumption \ref{ass:riem} hold, $f\in L^2(D)$, and $b,q\in
L^\infty(D)$.   Then there exists an $h_0>0$ such that for all $h\le h_0$
\begin{equation}\label{discr-inf-sup}
  c \|\psi_h\|_U \le \sup_{\varphi_h \in V_h} \frac{A(\psi_h,\varphi_h)}{ \|\varphi_h\|_{V}} \quad \forall\psi_h\in U_h.
\end{equation}
For such $h$, the finite element problem: Find $u_h\in U_h$ such that
\begin{equation}\label{discp}
A(u_h,\varphi_h)=(f,\varphi_h) \quad \forall \varphi_h\in V_h,
\end{equation}
has a unique solution.
\end{lemma}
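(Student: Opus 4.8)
The plan is a standard kickback (perturbation) argument that upgrades the continuous inf-sup condition \eqref{inf-sup-RL} for $A(\cdot,\cdot)$ to the discrete level, using the discrete stability of the principal part $a(\cdot,\cdot)$ encoded in the adjoint Ritz projection $R_h$. Fix $\psi_h\in U_h$ with $\psi_h\neq0$ (the case $\psi_h=0$ being trivial). By \eqref{inf-sup-RL} and Assumption \ref{ass:riem}, after rescaling there is a $w\in V$ with $\|w\|_V=\|\psi_h\|_U$ and $A(\psi_h,w)\ge\tfrac{c_0}{2}\|\psi_h\|_U^2$. I then take the discrete test function $\varphi_h:=R_h w\in V_h$, so that $\|\varphi_h\|_V=\|R_h w\|_V\le c\|w\|_V=c\|\psi_h\|_U$ by the $V$-stability in Lemma \ref{lem:Ritz}.

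The crux is to compare $A(\psi_h,\varphi_h)$ with $A(\psi_h,w)$. Writing $A(\psi_h,v)=a(\psi_h,v)+(b\psi_h'+q\psi_h,v)$ and using that the adjoint Ritz projection satisfies $a(\psi_h,w-R_h w)=0$ for all $\psi_h\in U_h$ --- here it is essential that $R_h$ is defined with respect to $a$, not $A$ --- one gets
\begin{equation*}
  A(\psi_h,\varphi_h)=A(\psi_h,w)-(b\psi_h'+q\psi_h,\,w-R_h w).
\end{equation*}
The perturbation term is estimated by Cauchy--Schwarz, the bound $\|b\psi_h'+q\psi_h\|_{L^2(D)}\le c\|\psi_h\|_U$ (from $b,q\in L^\infty(D)$), and the $L^2$-error estimate $\|w-R_h w\|_{L^2(D)}\le ch^{\alpha-2+\beta}\|w\|_V$ of Lemma \ref{lem:Ritz} for a fixed $\beta\in(2-\alpha,1/2)$. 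Since $\alpha\in(3/2,2)$ and $\beta>2-\alpha$, the exponent $\alpha-2+\beta$ is strictly positive, hence
\begin{equation*}
  A(\psi_h,\varphi_h)\ge\Big(\tfrac{c_0}{2}-ch^{\alpha-2+\beta}\Big)\|\psi_h\|_U^2\ge\tfrac{c_0}{4}\|\psi_h\|_U^2
\end{equation*}
for all $h\le h_0$, with $h_0$ chosen so that $ch_0^{\alpha-2+\beta}\le c_0/4$. Dividing by $\|\varphi_h\|_V\le c\|\psi_h\|_U$ gives $\sup_{\varphi_h\in V_h}A(\psi_h,\varphi_h)/\|\varphi_h\|_V\ge c\|\psi_h\|_U$, i.e.\ \eqref{discr-inf-sup}. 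For the second assertion I would argue exactly as in the proof of Lemma \ref{lem:inf-sup-disc-rl-triv}: since $\dim U_h=\dim V_h$, the stiffness matrix of \eqref{discp} is square, and the inf-sup condition \eqref{discr-inf-sup} forces injectivity of the associated map $U_h\to V_h^\ast$, hence invertibility; thus \eqref{discp} has a unique solution for every $h\le h_0$.

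I expect the only real obstacle to be tracking the positive power of $h$: the argument closes precisely because the Ritz-projection error carries the factor $h^{\alpha-2+\beta}$ with $\alpha-2+\beta>0$, which in turn rests on the sharp $L^2$-to-$\Hdi 0 {\alpha-1+\beta}$ regularity pickup of Theorem \ref{thm:reg-RL} used inside Lemma \ref{lem:Ritz}, and it is crucial that $\beta$ may be taken strictly larger than $2-\alpha$. The other points needing care are purely bookkeeping: separating the lower-order terms before invoking $R_h$ so that Galerkin orthogonality for $a$ (and not $A$) applies, and checking that the continuous supremizer in \eqref{inf-sup-RL} can be chosen with the stated normalization.
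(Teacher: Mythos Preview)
Your proof is correct and follows essentially the same kickback argument as the paper: both use the continuous inf-sup \eqref{inf-sup-RL}, the Galerkin orthogonality $a(\psi_h,w-R_hw)=0$ from the adjoint Ritz projection, the $V$-stability of $R_h$, and the $L^2$-error bound $\|w-R_hw\|_{L^2(D)}\le ch^{\alpha-2+\beta}\|w\|_V$ from Lemma~\ref{lem:Ritz} to absorb the lower-order terms. The only cosmetic difference is that the paper manipulates the supremum over $V$ directly (splitting $A(\psi_h,\varphi)=A(\psi_h,\varphi-R_h\varphi)+A(\psi_h,R_h\varphi)$ and bounding each sup separately), whereas you first pick a near-supremizer $w$ and then test with $\varphi_h=R_hw$; the two presentations are equivalent.
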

\begin{proof}
For any $\psi_h \in U_h$, by the inf-sup condition \eqref{inf-sup-RL}, there holds
\begin{equation*}
 c_0 \| \psi_h \|_{U}\le \sup_{\varphi \in V} \frac{A(\psi_h,\varphi)}{\| \varphi \|_{V}}
\leq \sup_{\varphi \in V} \frac{A(\psi_h,\varphi-R_h\varphi)}{\| \varphi \|_{V}}
+  \sup_{\varphi \in V} \frac{A(\psi_h,R_h\varphi)}{\| \varphi \|_{V}}=:\mathrm{I}+\mathrm{II}.
\end{equation*}
By Lemma \ref{lem:Ritz}, we have for $\beta\in(1-\alpha/2,1/2)$
\begin{equation*}
   \mathrm{I} = \sup_{\varphi \in V} \frac{(b\psi_h' + q\psi_h,\varphi-R_h\varphi)}{\| \varphi \|_{V}}
    \le  c\sup_{\varphi \in V} \frac{\|\psi_h \|_{U}  \| \varphi-R_h\varphi \|_{L^2(D)}}{\| \varphi \|_{V}}\le c_1h^{\alpha-2+\beta}\| \psi_h \|_{U}.
\end{equation*}
Meanwhile, the second term $\mathrm{II}$ can be bounded by \eqref{inf-sup-RL}
and Lemma \ref{lem:Ritz}:
\begin{equation*}
  \mathrm{II}\le c\sup_{\varphi \in V} \frac{A(\psi_h,R_h\varphi)}{\| R_h\varphi\|_{V}}
  =c\sup_{\varphi_h \in V_h} \frac{A(\psi_h,\varphi_h)}{\| \varphi_h \|_{V}}.
\end{equation*}
By choosing an $h_0$ such that $c_1 h_0^{\alpha-2+\beta}=c_0/2$ we get the discrete inf-sup
condition \eqref{discr-inf-sup}, and the unique existence of the solution to \eqref{discp} follows directly.
\end{proof}

Last we give some error estimates for the adjoint problem.
\begin{lemma}\label{lem:adjoint-error-RL}
Let Assumption \ref{ass:riem} hold, $f\in L^2(D)$, $b \in W^{1,\infty}(D)$ and $q\in L^\infty(D)$, and
$w$ be the solution of the adjoint problem \eqref{eqn:var-RL-adj}. Then there holds
\begin{equation}\label{approx-adj-RL}
    \inf_{\psi_h \in V_h} \|w-\psi_h\|_{L^2(D)} + \inf_{\psi_h \in V_h} \|w-\psi_h\|_V 
    \le c h \|f\|_{L^2(D)}.
\end{equation}
\end{lemma}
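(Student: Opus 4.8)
The plan is to produce a single $\psi_h\in V_h$ that realizes the $O(h)$ bound in both norms at once, built from the piecewise‑constant $L^2(D)$‑projection. Recall from the proof of Theorem~\ref{thm:reg-RL-adj} that \eqref{eqn:var-RL-adj} may be rewritten as $a(\varphi,w)=\langle\varphi,\widetilde f\rangle$ with $\widetilde f=f+(bw)'-qw$, and that $w=-{_x\hspace{-0.3mm}I_1^{\alpha}}\widetilde f+c_0(1-x)^{\alpha-1}$ with $c_0=({_x\hspace{-0.3mm}I_1^{\alpha}}\widetilde f)(0)$ and $|c_0|\le c\|f\|_{L^2(D)}$; in particular $w(0)=-c_0+c_0=0$ and $w(1)=0$. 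Taking $\beta=2-\alpha$ (admissible since $\alpha>3/2$) in Theorem~\ref{thm:reg-RL-adj} gives $w\in\Hdi 1 1\cap\Hd{\alpha-1}$ with $\|w\|_{\Hdi 1 1}\le c\|f\|_{L^2(D)}$, so $w\in H^1(D)$ and $w$ vanishes at the endpoints. Set $g:=\DDR1{\alpha-1}w$. Applying $\DDR1{\alpha-1}$ to the representation of $w$ and using Theorem~\ref{thm:fracop}(a) together with $\DDR1{\alpha-1}(1-x)^{\alpha-1}=\Gamma(\alpha)$ yields $g=-{_x\hspace{-0.3mm}I_1^{1}}\widetilde f+c_0\Gamma(\alpha)$; applying ${_x\hspace{-0.3mm}I_1^{\alpha-1}}$ and Theorem~\ref{thm:fracop}(a) recovers $w={_x\hspace{-0.3mm}I_1^{\alpha-1}}g$, whence $({_x\hspace{-0.3mm}I_1^{\alpha-1}}g)(0)=w(0)=0$.

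The key step is to show $g\in H^1(D)$ with $\|g\|_{H^1(D)}\le c\|f\|_{L^2(D)}$, and here the rough term $(bw)'$ in $\widetilde f$ must be treated as an exact derivative: since $w(1)=0$, ${_x\hspace{-0.3mm}I_1^{1}}\!\big((bw)'\big)(x)=[bw]_x^1=-b(x)w(x)$, so $g=-{_x\hspace{-0.3mm}I_1^{1}}f+bw+{_x\hspace{-0.3mm}I_1^{1}}(qw)+c_0\Gamma(\alpha)$. Each term lies in $H^1(D)$: the two fractional integrals have $L^2(D)$ weak derivatives $-f$ and $-qw$ (with $qw\in L^2(D)$ since $q\in L^\infty(D)$), and $bw\in H^1(D)$ because $b\in W^{1,\infty}(D)$ is a multiplier on $H^1(D)$; the bound then follows from $\|w\|_{\Hdi 1 1}\le c\|f\|_{L^2(D)}$ and $|c_0|\le c\|f\|_{L^2(D)}$. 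Now take $\psi_h={_x\hspace{-0.3mm}I_1^{\alpha-1}}(P_V\Pi_0 g)$, where $\Pi_0$ is the piecewise‑constant $L^2(D)$‑projection in \eqref{l2-project}; by the construction used in the proof of Lemma~\ref{fem-interp-U}, $\psi_h\in V_h$, and $\DDR1{\alpha-1}\psi_h=P_V\Pi_0 g$ by Theorem~\ref{thm:fracop}. Consequently $\|w-\psi_h\|_V\le c\|g-P_V\Pi_0 g\|_{L^2(D)}$, and since $w-\psi_h={_x\hspace{-0.3mm}I_1^{\alpha-1}}(g-P_V\Pi_0 g)$ and ${_x\hspace{-0.3mm}I_1^{\alpha-1}}$ is bounded on $L^2(D)$ (Theorem~\ref{thm:fracop}(c)), also $\|w-\psi_h\|_{L^2(D)}\le c\|g-P_V\Pi_0 g\|_{L^2(D)}$.

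It remains to estimate $\|g-P_V\Pi_0 g\|_{L^2(D)}\le\|g-\Pi_0 g\|_{L^2(D)}+\|\Pi_0 g-P_V\Pi_0 g\|_{L^2(D)}$. The second summand is the constant $\Gamma(\alpha)({_x\hspace{-0.3mm}I_1^{\alpha-1}}\Pi_0 g)(0)=\Gamma(\alpha)({_x\hspace{-0.3mm}I_1^{\alpha-1}}(\Pi_0 g-g))(0)$, using $({_x\hspace{-0.3mm}I_1^{\alpha-1}}g)(0)=0$, and by Cauchy--Schwarz its modulus is $\le c\|\Pi_0 g-g\|_{L^2(D)}$ because $t^{\alpha-2}\in L^2(0,1)$ (this is precisely where $\alpha>3/2$ enters, as elsewhere in the paper). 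Hence $\|g-P_V\Pi_0 g\|_{L^2(D)}\le c\|g-\Pi_0 g\|_{L^2(D)}\le ch\|g\|_{H^1(D)}\le ch\|f\|_{L^2(D)}$ by the standard approximation property of $\Pi_0$ on a uniform mesh, and combining with the previous paragraph gives \eqref{approx-adj-RL}. I expect the regularity claim $g=\DDR1{\alpha-1}w\in H^1(D)$ to be the main obstacle: the crude bounds (via Lemma~\ref{l:caprl}, $g=-{_x\hspace{-0.3mm}I_1^{2-\alpha}}(w')$, or via $g'=\widetilde f\in\Hd{\alpha-2}$) only place $g$ in $H^{\alpha-1}(D)$ and would lower the rate to $h^{\alpha-1}$, so the full order $h$ is recovered only by using $w(1)=0$ and $w\in H^1(D)$ simultaneously, so that anti‑differentiating $(bw)'$ returns the $H^1$ function $-bw$.
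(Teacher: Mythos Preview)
Your proof is correct and follows essentially the same route as the paper. Both arguments construct the same test function $\psi_h={_x\hspace{-0.3mm}I_1^{\alpha-1}}(P_V\Pi_0 g)$ with $g=\DDR1{\alpha-1}w$, and both reduce the estimate to showing $\|g-\Pi_0 g\|_{L^2(D)}\le ch\|f\|_{L^2(D)}$; the paper phrases this via the splitting $w=w^r+w^s$ (so that $\DDR1{\alpha-1}w^s$ is constant and $\DDR1{\alpha-1}w^r\in\Hdi1{1}$, the latter estimate relying on the same identity ${_x\hspace{-0.3mm}I_1^{\alpha}}(bw)'=-{_x\hspace{-0.3mm}I_1^{\alpha-1}}(bw)$ you use), whereas you show directly that $g\in H^1(D)$ by expanding $\widetilde f$ and anti-differentiating $(bw)'$. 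The only other cosmetic difference is that the paper handles the $P_V$ correction via Lemma~\ref{lem:PvPw} (i.e.\ $P_Vg=g$, so $g-P_V\Pi_0 g=P_V(g-\Pi_0 g)$), while you bound the constant $\Gamma(\alpha)({_x\hspace{-0.3mm}I_1^{\alpha-1}}\Pi_0 g)(0)$ directly by Cauchy--Schwarz; invoking Lemma~\ref{lem:PvPw} would shorten that step.
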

\begin{proof}
By the solution representation, $w= w^r  +  w^s$ and $w^s=\mu (1-x)^{\alpha-1}$, where the regular
part $w^r \in \Hdi1{\alpha}$, and $\mu\in \mathbb{R}$. By Theorem \ref{thm:reg-RL-adj}, there holds
\begin{equation}\label{eqn:wrlef}
\begin{split}
  \| w^r \|_{\Hdi1{\alpha}}&\le \| {_x\hspace{-0.3mm}I_1^{\alpha}}f \|_{\Hdi1{\alpha}} +  \| {_x\hspace{-0.3mm}I_1^{\alpha}}(bw)' \|_{\Hdi1{\alpha}} +
  \| {_x\hspace{-0.3mm}I_1^{\alpha}}(qw) \|_{\Hdi1{\alpha}}\\
  &\le c ( \| f \|_{L^2(D)} + \| bw \|_{\Hdi1{1}} + \| qw \|_{L^2(D)}) \le c\| f \|_{L^2(D)}.
\end{split}
\end{equation}
Let $\varphi_{w^r} = {\DDR1{\alpha-1} w^r}$,  $\varphi_{w^s} = {\DDR1{\alpha-1} w^s}$ and $\varphi_w=\varphi_{w^r}+\varphi_{w^s}$.
By Lemma \ref{lem:PvPw}, we have
\begin{equation*}
 \begin{split}
  \| \varphi_w - P_V \Pi_0 \varphi_w\|_{L^2(D)} &= \|  P_V(\varphi_w - \Pi_0\varphi_w)\|_{L^2(D)} \le c \| \varphi_w - \Pi_0 \varphi_w \|_{L^2(D)}  \\
  & \le c\| \varphi_{w^r} - \Pi_0 \varphi_{w^r}\|_{L^2(D)} + c\| \varphi_{w^s} - \Pi_0 \varphi_{w^s}\|_{L^2(D)}.
 \end{split}
\end{equation*}
In view of $\varphi_{w^r}\in \Hdi1{1}$, by Lemma \ref{fem-interp-U} and \eqref{eqn:wrlef}, we deduce
\begin{equation*}
 \| \varphi_{w^r} - \Pi_0 \varphi_{w^r}\|_{L^2(D)} \le c h \|  \varphi_{w^r} \|_{\Hdi1{1}}\le ch\| w^r \|_{\Hdi1{\alpha}}\le ch\| f \|_{L^2(D)}.
\end{equation*}
Meanwhile since $\varphi_{w^s}$ is a constant, $\| \varphi_{w^s} - \Pi_0 \varphi_{w^s}\|_{L^2(D)} = 0.$ Then by letting
$\psi_h:= {_x\hspace{-0.3mm}I_1^{\alpha-1}} P_V \Pi_0 \varphi_w \in V_h $ and Lemma \ref{lem:PvPw}, there holds
\begin{equation*}
 \inf_{\psi_h \in V_h}\|w-\psi_h\|_V\leq \|\DDR1 {\alpha-1} (w-\psi_h)\|_{L^2(D)}  \le \| \varphi_w - P_V \Pi_0 \varphi_w\|_{L^2(D)}  \le ch\| f \|_{L^2(D)}.
\end{equation*}
Since $w \in \Hdi11$, there holds $w=  {{_x\hspace{-0.3mm}I_1^{\alpha-1}}}{\DDR1{\alpha-1}}
w = {_x\hspace{-0.3mm}I_1^{\alpha-1}}\varphi_w$. Hence
\begin{equation*}
\begin{split}
  \inf_{\psi_h \in V_h} \|w-\psi_h\|_{L^2(D)}  &\le \| {_x\hspace{-0.3mm}I_1^{\alpha-1}} (\varphi_w - P_V \Pi_0 \varphi_w)\|_{L^2(D)}  \\
  & \le c \| \varphi_w - P_V \Pi_0 \varphi_w\|_{L^2(D)}   \le ch\| f \|_{L^2(D)}.
\end{split}
\end{equation*}
\end{proof}

\begin{remark}
The $L^2(D)$  estimate in Lemma \ref{lem:adjoint-error-RL} is suboptimal, but the
$H^{\alpha-1}(D)$ estimate suffices deriving an optimal $L^2(D)$ estimate in Theorem \ref{thm:femrl}.
\end{remark}

Now, we state the main theorem of this part, i.e., optimal error estimates in the $L^2(D)$ and
$H^1(D)$ norms for the Petrov-Galerkin FEM in the Riemann-Liouville case.
\begin{theorem}\label{thm:femrl}
Let Assumption \ref{ass:riem} hold, $f\in L^2(D)$, $b\in W^{1,\infty}(D)$, and $q\in L^\infty(D)$. Then
there exists an $h_0>0$ such that for all $h\le h_0$, the solution $u_h\in U_h$ to
problem \eqref{discp} satisfies for any $\beta\in(2-\alpha,1/2)$,
\begin{equation*}
   \|u-u_h\|_{L^2(D)} + h\|u-u_h\|_U  \le ch^{\alpha-1+\beta} \|f\|_{L^2(D)}.
\end{equation*}
\end{theorem}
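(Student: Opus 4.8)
The plan is to follow the standard Petrov-Galerkin error analysis: first establish the $H^1(D)$ (i.e., $\|\cdot\|_U$) estimate via a quasi-optimality (C\'ea-type) argument based on the discrete inf-sup condition, and then obtain the $L^2(D)$ estimate by a duality (Aubin-Nitsche) argument exploiting the adjoint problem.

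First I would invoke Lemma \ref{lem:disinfsup:riem}: for $h\le h_0$ the discrete problem \eqref{discp} is well-posed and satisfies the discrete inf-sup condition \eqref{discr-inf-sup}. Combined with the Galerkin orthogonality $A(u-u_h,\varphi_h)=0$ for all $\varphi_h\in V_h$ and the boundedness of $A(\cdot,\cdot)$ on $U\times V$, a routine argument yields the quasi-optimal bound
\begin{equation*}
  \|u-u_h\|_U \le c\inf_{\psi_h\in U_h}\|u-\psi_h\|_U.
\end{equation*}
By Theorem \ref{thm:reg-RL}, the solution satisfies $u\in\Hdi0{\alpha-1+\beta}\cap\Hd1$ with $\|u\|_{\Hdi0{\alpha-1+\beta}}\le c\|f\|_{L^2(D)}$; applying the first approximation estimate of Lemma \ref{fem-interp-U} with $\gamma=\alpha-1+\beta$ (noting $\gamma-1=\alpha-2+\beta$ under $\alpha>3/2$, $\beta>2-\alpha$ so that $\gamma>1$) gives
\begin{equation*}
  \|u-u_h\|_U \le ch^{\alpha-2+\beta}\|f\|_{L^2(D)},
\end{equation*}
which is the $h\|u-u_h\|_U$ part of the claim after multiplying by $h$.

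For the $L^2(D)$ estimate I would set $e=u-u_h$ and let $w\in V$ solve the adjoint problem \eqref{eqn:var-RL-adj} with data $F=e\in L^2(D)$, so that $\|e\|_{L^2(D)}^2=(e,e)=A(e,w)$. Using Galerkin orthogonality, $A(e,w)=A(e,w-\psi_h)$ for any $\psi_h\in V_h$; then boundedness of $A$ gives $\|e\|_{L^2(D)}^2\le c\|e\|_U\inf_{\psi_h\in V_h}\|w-\psi_h\|_V$. By Lemma \ref{lem:adjoint-error-RL}, $\inf_{\psi_h\in V_h}\|w-\psi_h\|_V\le ch\|e\|_{L^2(D)}$, whence $\|e\|_{L^2(D)}\le ch\|e\|_U\le ch\cdot h^{\alpha-2+\beta}\|f\|_{L^2(D)}=ch^{\alpha-1+\beta}\|f\|_{L^2(D)}$, as desired. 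Adding the two bounds completes the proof.

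The main obstacle is not any single deep estimate—Lemmas \ref{lem:disinfsup:riem}, \ref{fem-interp-U}, and \ref{lem:adjoint-error-RL} do the heavy lifting—but rather making sure the bookkeeping on the regularity exponents is internally consistent: one must check that the range $\beta\in(2-\alpha,1/2)$ used here is compatible with the slightly different ranges $\beta\in[2-\alpha,1/2)$ appearing in the cited regularity theorems, that $\gamma=\alpha-1+\beta$ indeed lies in $(1,2)$ so Lemma \ref{fem-interp-U} applies with the stated rate, and that the adjoint estimate of Lemma \ref{lem:adjoint-error-RL} (which requires $b\in W^{1,\infty}(D)$) is invoked under the hypotheses actually assumed. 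A subtle point worth emphasizing is that although the $L^2(D)$ bound in Lemma \ref{lem:adjoint-error-RL} is only $O(h)$-suboptimal, it is the $\|\cdot\|_V$ bound that feeds the duality argument, and that one is sharp enough to yield the optimal $h^{\alpha-1+\beta}$ rate in $L^2(D)$; I would flag this so the reader sees why the suboptimal-looking adjoint estimate does not degrade the final result.
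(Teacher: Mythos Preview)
Your proposal is correct and follows essentially the same route as the paper: the $\|\cdot\|_U$ estimate via C\'ea's lemma from the discrete inf-sup condition (Lemma \ref{lem:disinfsup:riem}), Galerkin orthogonality, the regularity Theorem \ref{thm:reg-RL}, and the approximation Lemma \ref{fem-interp-U}; then the $L^2(D)$ estimate by the duality argument using the adjoint problem and the $\|\cdot\|_V$ bound of Lemma \ref{lem:adjoint-error-RL}. Your remarks on the exponent bookkeeping and on why only the $\|\cdot\|_V$ (not the $L^2(D)$) part of Lemma \ref{lem:adjoint-error-RL} is needed are exactly on point.
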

\begin{proof}
The error estimate in the $\Hd1$-norm follows from Cea's lemma, \eqref{discr-inf-sup}
and Galerkin orthogonality. Specifically, for any $h\leq h_0$ and $\psi_h \in U_h$,
by \eqref{discr-inf-sup}
\begin{equation*}
\begin{split}
    \| u_h- \psi_h \|_{U} &\le c\sup_{\varphi_h \in V_h} \frac{A(u_h-\psi_h,\varphi_h)}{ \|\varphi_h\|_{V}}
    \le c\sup_{\varphi_h \in V_h} \frac{A(u-\psi_h,\varphi_h)}{ \|\varphi_h\|_{V}}
    \le c\| u-\psi_h \|_{U}.
\end{split}
\end{equation*}
Hence the triangle inequality yields for any $\psi_h \in U_h$
\begin{equation*}
    \| u-u_h \|_U \le \|  u-\psi_h \|_U + \|\psi_h - u_h\|_U
    \le c\| u-\psi_h \|_U.
\end{equation*}
Then the $\Hd1$-estimate follows from Theorem \ref{thm:reg-RL} and Lemma \ref{fem-interp-U}
\begin{equation*}
    \|u-u_h\|_U \le \inf_{\psi_h\in U_h}c\|u-\psi_h \|_U
    \le ch^{\alpha-2+\beta} \|f\|_{L^2(D)},
\end{equation*}
with $\beta \in (2-\alpha,1/2)$. Next let $w$ be the solution of
problem \eqref{eqn:var-RL-adj} with  $F=u-u_h$. By
Theorem \ref{thm:reg-RL} and Lemmas \ref{fem-interp-U} and \ref{lem:adjoint-error-RL}, we deduce
\begin{equation*}
\begin{split}
    \| u-u_h \|_{L^2(D)}^2 &= A( u-u_h, w)
    \le c\|u-u_h \|_U \inf_{\varphi_h \in V_h} \|w-\varphi_h\|_V\\
    & \le c h^{\alpha-1+\beta} \| f \|_{L^2(D)}\| u-u_h  \|_{L^2(D)}.
\end{split}
\end{equation*}
\end{proof}

\begin{remark}
Since the solution $u$ is in $\Hd1 \cap H^{\alpha-1+\beta}(D)$ with $\beta\in(2-\alpha,1/2)$,
both $L^2(D)$ and $\Hd 1$ error estimates are optimal. This is in stark contrast with that
in \cite{JinLazarovPasciak:2013a}, where the $L^2(D)$-error estimate suffers from one half order loss.
\end{remark}

\subsection{Error estimates in the Caputo case}
Here the finite element problem reads: given any $F\in W^*$, find $u_h\in U_h$ such that
\begin{equation}\label{eqn:fem-C}
    A(u_h, \varphi_h) = \langle F,\varphi_h \rangle \quad  \forall\varphi_h \in W_h.
\end{equation}

First we prove the stability of problem \eqref{eqn:fem-C} for the case $b,\,q\equiv0$.
\begin{lemma}\label{lem:inf-sup-disc-cap-triv}
Let $a(\cdot,\cdot)$ be the bilinear form in \eqref{eqn:a-C}. Then there holds
\begin{equation}\label{eqn:discr-inf-sup-triv-c}
   \sup_{\varphi_h \in W_h} \frac{a(\psi_h,\varphi_h)}{ \|\varphi_h\|_{W}} \ge c\|\psi_h\|_U \quad \forall \psi_h\in U_h,
\end{equation}
and the finite element problem: Find $u_h\in U_h$ such that
\begin{equation*}
a(u_h,\varphi_h)=(f,\varphi_h) \quad \forall \varphi_h\in W_h,
\end{equation*}
has a unique solution.
\end{lemma}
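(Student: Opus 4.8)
The plan is to follow the proof of Lemma~\ref{lem:inf-sup-disc-rl-triv} essentially line by line, replacing the projection $P_V$ there by $P_W$; the only genuinely new point is to check that the resulting test function respects the moment constraint $(\,\cdot\,,x^{1-\alpha})=0$ that distinguishes $W$ from $V$. Fix $\psi_h\in U_h$ and set $\varphi_h:=-{_x\hspace{-0.3mm}I_1^{\alpha-1}}(P_W\psi_h')$. Since $\psi_h'$ is piecewise constant and $P_W$ subtracts only a constant, $P_W\psi_h'$ is again piecewise constant, so by the identity $\varphi_i=\Gamma(\alpha){_x\hspace{-0.3mm}I_1^{\alpha-1}}\chi_{[0,x_i]}$ we have $\varphi_h\in\text{span}\{\varphi_i\}_{i=1}^m\subset\Hdi1{\alpha-1}$. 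To place $\varphi_h$ in $W$, I would invoke the definition of $P_W$ together with the elementary evaluations ${_x\hspace{-0.3mm}I_1^{\alpha-1}}1=(1-x)^{\alpha-1}/\Gamma(\alpha)$ and $((1-x)^{\alpha-1},x^{1-\alpha})=B(2-\alpha,\alpha)=\Gamma(2-\alpha)\Gamma(\alpha)$, which give $({_x\hspace{-0.3mm}I_1^{\alpha-1}}1,x^{1-\alpha})=\Gamma(2-\alpha)$ and hence, by the very normalization built into $P_W$, $({_x\hspace{-0.3mm}I_1^{\alpha-1}}(P_W\psi_h'),x^{1-\alpha})=0$. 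Therefore $(\varphi_h,x^{1-\alpha})=0$, so $\varphi_h\in W_h$ is an admissible test function; this step plays the role that the verification $\varphi_h(0)=0$ played in the Riemann--Liouville case.

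Next I would evaluate the form. By the semigroup property (Theorem~\ref{thm:fracop}(a)) one has ${\DDR1{\alpha-1}}{_x\hspace{-0.3mm}I_1^{\alpha-1}}g=-\frac{d}{dx}({_x\hspace{-0.3mm}I_1^{1}}g)=g$ for every $g\in L^2(D)$; applying this with $g=P_W\psi_h'$ and using $(\psi_h',1)=\psi_h(1)-\psi_h(0)=0$ to drop the constant part of $P_W$,
\[
a(\psi_h,\varphi_h)=-(\psi_h',{\DDR1{\alpha-1}}\varphi_h)=(\psi_h',P_W\psi_h')=(\psi_h',\psi_h')=\|\psi_h\|_U^2 .
\]
For the denominator, the norm on $W$ is the one inherited from $\Hdi1{\alpha-1}$, so the boundedness of ${_x\hspace{-0.3mm}I_1^{\alpha-1}}$ from $L^2(D)=\Hdi10$ to $\Hdi1{\alpha-1}$ (Theorem~\ref{thm:fracop}(c)) together with the $L^2(D)$-boundedness of $P_W$ (Lemma~\ref{lem:PvPw}) yields $\|\varphi_h\|_W\le c\|P_W\psi_h'\|_{L^2(D)}\le c\|\psi_h\|_U$. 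Combining, for every $\psi_h\in U_h$ the constructed $\varphi_h\in W_h$ satisfies $a(\psi_h,\varphi_h)\ge c^{-1}\|\psi_h\|_U\|\varphi_h\|_W$, which is exactly \eqref{eqn:discr-inf-sup-triv-c}.

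Finally, for unique solvability of the discrete problem I would note that its stiffness matrix is square: $\dim U_h=m-1$ (interior nodes) and $\dim W_h=m-1$ as well, since the $m$ functions $\varphi_1,\dots,\varphi_m$ are subject to exactly one nontrivial linear constraint $(\,\cdot\,,x^{1-\alpha})=0$. The inf-sup bound \eqref{eqn:discr-inf-sup-triv-c} shows that the homogeneous problem $a(u_h,\varphi_h)=0$ for all $\varphi_h\in W_h$ has only the solution $u_h=0$, so the matrix is invertible and the discrete problem is uniquely solvable. I expect the only real obstacle to be the moment computation isolated in the first paragraph: one must check that the constant subtracted in $P_W$ is precisely the one annihilating the $x^{1-\alpha}$-moment of ${_x\hspace{-0.3mm}I_1^{\alpha-1}}\psi_h'$, and this is exactly where the Beta-function identity $((1-x)^{\alpha-1},x^{1-\alpha})=\Gamma(2-\alpha)\Gamma(\alpha)$ enters; everything else is a routine transcription of the Riemann--Liouville argument.
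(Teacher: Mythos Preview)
Your proof is correct and follows essentially the same route as the paper: choose $\varphi_h=-{_x\hspace{-0.3mm}I_1^{\alpha-1}}(P_W\psi_h')$, verify $\varphi_h\in W_h$, compute $a(\psi_h,\varphi_h)=\|\psi_h\|_U^2$ via $(\psi_h',1)=0$, and bound $\|\varphi_h\|_W$ through the $L^2$-stability of $P_W$. The paper simply asserts $\varphi_h\in W_h$ without spelling out the Beta-function computation and omits the dimension count, but your added detail is sound and matches the intended argument.
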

\begin{proof}
For any fixed $\psi_h\in U_h$, let $\varphi_h=-{_x\hspace{-0.3mm}I_1^{\alpha-1}}(P_W \psi_h')$.
Then $\varphi_h \in W_h$ and
\begin{equation*}
\begin{split}
  a(\psi_h,\varphi_h)&=-\left(\psi_h', ~\DDR1{\alpha-1}
  \left(-{_x\hspace{-0.3mm}I_1^{\alpha-1}}(P_W \psi_h')\right)\right)
  = (\psi_h',P_W \psi_h')\\
   &= (\psi_h', \psi_h') -  c_\alpha({_x\hspace{-0.3mm}I_1^{\alpha-1}}\psi_h',x^{1-\alpha}) (\psi_h',1)
   =\| \psi_h \|_U^2.
\end{split}
\end{equation*}
Further, the $L^2(D)$-stability of $P_W$ yields
\begin{equation*}
 \| \varphi_h \|_W = \| {_x\hspace{-0.3mm}I_1^{\alpha-1}}(P_W \psi_h') \|_W
  \le c\| P_W \psi_h' \|_{L^2(D)} \le c\| \psi_h \|_U.
\end{equation*}
Then we obtain \eqref{eqn:discr-inf-sup-triv-c} and the unique existence of a
solution $u_h\in U_h$.
\end{proof}

Next we introduce the (adjoint) Ritz projection $R_h: W\to W_h$ defined by
\begin{equation*}
a(\psi_h,R_h\varphi)=a(\psi_h,\varphi) \quad \forall \psi_h \in U_h.
\end{equation*}

Analogous to Lemma \ref{lem:Ritz}, the following error estimates hold for $R_h$.
\begin{lemma}\label{lem:Ritz-C}
The projection $R_h$ is well-defined and satisfies for any $\varphi\in W$
\begin{equation}\label{eqn:Ritz-prop-C}
  \begin{aligned}
    \|R_h \varphi\|_W & \le c\| \varphi\|_W,\\
    \| \varphi- R_h \varphi \|_{L^{2}(D)} & \le c h^{\alpha-1}   \| \varphi \|_W.
  \end{aligned}
\end{equation}
\end{lemma}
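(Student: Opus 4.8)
The plan is to mirror the proof of Lemma \ref{lem:Ritz}, replacing the Riemann--Liouville test space $V$ by the Caputo space $W$, and exploiting the stronger interior regularity of the Caputo problem in the duality step. First I would establish the companion discrete inf--sup condition
\[
  \sup_{\psi_h\in U_h}\frac{a(\psi_h,\varphi_h)}{\|\psi_h\|_U}\ \ge\ c\,\|\varphi_h\|_W\qquad\forall\,\varphi_h\in W_h .
\]
Given $\varphi_h\in W_h$, take the explicit test function $u_{\varphi_h}={_x\hspace{-0.3mm}I_1^{2-\alpha}}\varphi_h$. Since $\DDR1{\alpha-1}\varphi_h$ is piecewise constant, $u_{\varphi_h}$ is the (right) primitive of a piecewise constant function, hence continuous and piecewise linear; moreover $u_{\varphi_h}(1)=0$ trivially and $u_{\varphi_h}(0)=({_x\hspace{-0.3mm}I_1^{2-\alpha}}\varphi_h)(0)=\Gamma(2-\alpha)^{-1}(\varphi_h,x^{1-\alpha})=0$ because $\varphi_h\in W$, so $u_{\varphi_h}\in U_h$. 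By \eqref{eqn:cap-inj} one has $a(u_{\varphi_h},\varphi_h)=\|\varphi_h\|_W^2$, while $\|u_{\varphi_h}\|_U=\|\DDR1{\alpha-1}\varphi_h\|_{L^2(D)}\le c\|\varphi_h\|_W$ by Theorem \ref{thm:fracop}, which gives the displayed bound. Since the stiffness matrix is square, this bound already forces its invertibility (if $a(\psi_h,\varphi_h)=0$ for all $\psi_h\in U_h$ then $\varphi_h=0$), so $R_h$ is well defined; and combined with the Galerkin relation and the boundedness of $a(\cdot,\cdot)$ on $U\times W$ it yields $\|R_h\varphi\|_W\le c\sup_{\psi_h\in U_h}a(\psi_h,R_h\varphi)/\|\psi_h\|_U=c\sup_{\psi_h\in U_h}a(\psi_h,\varphi)/\|\psi_h\|_U\le c\|\varphi\|_W$, exactly as in Lemma \ref{lem:Ritz}.

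For the $L^2(D)$ estimate I would run an Aubin--Nitsche argument. Set $e=\varphi-R_h\varphi\in W$ and let $g\in U$ solve $a(g,v)=(e,v)$ for all $v\in W$, i.e.\ the $b,q\equiv0$ Caputo problem with source $e\in L^2(D)$; by the solution representation \eqref{eqn:solrep-Cap} (equivalently, Theorem \ref{thm:regcap} with $s=0$ and $b,q\equiv0$), $g\in H^\alpha(D)\cap\Hd1$ with $\|g\|_{H^\alpha(D)}\le c\|e\|_{L^2(D)}$. Using the Galerkin orthogonality $a(\psi_h,e)=0$ for all $\psi_h\in U_h$ and the boundedness of $a(\cdot,\cdot)$, for every $\psi_h\in U_h$
\[
  \|e\|_{L^2(D)}^2=a(g,e)=a(g-\psi_h,e)\le c\,\|g-\psi_h\|_U\,\|e\|_W ;
\]
taking the infimum over $\psi_h\in U_h$, invoking Lemma \ref{fem-interp-U} with $\gamma=\alpha$, and using $\|e\|_W\le c\|\varphi\|_W$ from the stability just proved, I would obtain $\|e\|_{L^2(D)}^2\le c\,h^{\alpha-1}\|g\|_{H^\alpha(D)}\|\varphi\|_W\le c\,h^{\alpha-1}\|e\|_{L^2(D)}\|\varphi\|_W$, which is the second bound in \eqref{eqn:Ritz-prop-C}.

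The argument is essentially parallel to Lemma \ref{lem:Ritz}, so the only point that really needs care — and the reason the rate here is the full $h^{\alpha-1}$ rather than the $h^{\alpha-2+\beta}$ of Lemma \ref{lem:Ritz} — is that the dual problem is now the Caputo problem with $b,q\equiv0$, whose solution \eqref{eqn:solrep-Cap} enjoys $H^\alpha(D)$ regularity with no weak endpoint singularity, so Lemma \ref{fem-interp-U} may be applied with the optimal exponent $\gamma=\alpha$. The remaining Caputo-specific bookkeeping is the integral constraint $(\cdot,x^{1-\alpha})=0$ defining $W$, which (through the projector $P_W$ and Lemma \ref{lem:PvPw}) enters the construction of $u_{\varphi_h}\in U_h$ and of the discrete spaces exactly as $P_V$ does in the Riemann--Liouville case; I do not expect any genuinely new difficulty from it.
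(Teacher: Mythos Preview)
Your proposal is correct and follows essentially the same approach as the paper: the same explicit test function $\psi_h={_x\hspace{-0.3mm}I_1^{2-\alpha}}\varphi_h\in U_h$ (with $\psi_h(0)=0$ coming from the integral constraint defining $W$) yields the companion discrete inf--sup, and the $L^2(D)$ bound is obtained by the identical Aubin--Nitsche duality using the $H^\alpha(D)$ regularity of the Caputo problem with $b,q\equiv0$. Your remark that the improved exponent $h^{\alpha-1}$ versus $h^{\alpha-2+\beta}$ in Lemma~\ref{lem:Ritz} stems precisely from this full regularity of the Caputo dual solution is exactly the point.
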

\begin{proof}
For any given $\varphi_h \in W_h$, let $ \psi_h= {_x\hspace{-0.3mm}I_1^{2-\alpha}} \varphi_h $.
Clearly, $\psi_h(1)=0$ and $\psi_h(0)=({_x\hspace{-0.3mm}I_1^{2-\alpha}} \varphi_h) (0) =
c_\alpha(x^{1-\alpha},\varphi_h)=0$ since $\varphi_h \in W_h \subset W$. Consequently
\begin{equation*}
  \| \varphi_h \|_{W} \le  c\|\DDR1{\alpha-1} \varphi_h\|_{L^2(D)} = c\| \psi_h \|_U.
\end{equation*}
The discrete inf-sup condition follows from this and \eqref{eqn:cap-inj}:
\begin{equation*}
   \sup_{\psi_h \in U_h}
  \frac{a(\psi_h,\varphi_h)}{ \|\psi_h\|_{U}}\geq c\|\varphi_h\|_W \quad \forall\varphi_h\in W_h.
\end{equation*}
The $\Hdi1{\alpha-1}$-stability of $R_h$ follows immediately
\begin{equation*}
      \|R_h \varphi\|_W \leq c\sup_{\psi_h \in U_h} \frac{a(\psi_h,R_h \varphi)}{ \|\psi_h\|_{U}}
         =c\sup_{\psi_h \in U_h} \frac{a(\psi_h,\varphi)}{ \|\psi_h\|_{U}} \le c\| \varphi \|_W.
\end{equation*}
Next let $g$ be the solution to \eqref{eqn:var-Caputo} with $F=\varphi-R_h\varphi$. By Theorem \ref{thm:regcap},
$\|g\|_{H^\alpha(D)}\leq c\|\varphi-R_h\varphi\|_{L^2(D)}$. Then by the Galerkin
orthogonality, Lemmas \ref{lem:Ritz-C} and \ref{fem-interp-U}, we deduce
\begin{equation*}
  \begin{aligned}
    \| \varphi- R_h \varphi \|_{L^{2}(D)}^2 & = a(g,\varphi- R_h \varphi) \le c \inf_{\psi_h\in U_h}\|  g- \psi_h \|_U \| \varphi-R_h\varphi \|_W\\
    &\le c h^{\alpha-1}   \| \varphi- R_h \varphi \|_{L^{2}(D)} \| \varphi \|_W.
  \end{aligned}
\end{equation*}
\end{proof}

\begin{remark}
The $L^2(D)$ error estimate of $R_h$ in the Caputo case is optimal, since
its adjoint problem has full regularity pickup.
\end{remark}
\begin{lemma}\label{lem:disinfsup:caputo}
Let Assumption \ref{ass:caputo} hold, $f\in L^2(D)$, and $b,q\in
L^\infty(D)$.   Then there exists an $h_0$ such that for all $h\le h_0$
\begin{equation*}
  c \|\psi_h\|_U \le \sup_{\varphi_h \in W_h} \frac{A(\psi_h,\varphi_h)}{ \|\varphi_h\|_{W}} \quad \forall\psi_h\in U_h.
\end{equation*}
For such $h$, the finite element problem: Find $u_h\in U_h$ such that
\begin{equation}\label{discp-C}
A(u_h,\varphi_h)=(f,\varphi_h) \quad \forall\varphi_h\in W_h,
\end{equation}
has a unique solution.
\end{lemma}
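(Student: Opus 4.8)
The plan is to replicate the kickback argument that established Lemma \ref{lem:disinfsup:riem}, now with the Caputo test space $W$ and the Ritz projection $R_h:W\to W_h$ of Lemma \ref{lem:Ritz-C}. First I would record the continuous inf-sup condition for the full bilinear form $A(\cdot,\cdot):U\times W\to\mathbb{R}$: under Assumption \ref{ass:caputo}(a), Lemma \ref{lem:inf-sup-a-C} together with the compactness of the lower-order perturbation $\psi\mapsto -(b\psi',\cdot)-(q\psi,\cdot)$ (which is where $b,q\in L^\infty(D)$ and the compact embedding of $L^2(D)$ into $W^\ast$ enter) and Petree-Tartar's lemma, exactly as in the proof of Theorem \ref{thm:wpreg} and as invoked in Theorem \ref{thm:wpreg-C}, produces a constant $c_0>0$ with $c_0\|\psi\|_U\le\sup_{\varphi\in W}A(\psi,\varphi)/\|\varphi\|_W$ for all $\psi\in U$.

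Next, for a fixed $\psi_h\in U_h$ I would split
\begin{equation*}
c_0\|\psi_h\|_U\le\sup_{\varphi\in W}\frac{A(\psi_h,\varphi-R_h\varphi)}{\|\varphi\|_W}+\sup_{\varphi\in W}\frac{A(\psi_h,R_h\varphi)}{\|\varphi\|_W}=:\mathrm{I}+\mathrm{II}.
\end{equation*}
For $\mathrm{I}$, the defining relation $a(\psi_h,R_h\varphi)=a(\psi_h,\varphi)$ for $\psi_h\in U_h$ gives $A(\psi_h,\varphi-R_h\varphi)=(b\psi_h'+q\psi_h,\varphi-R_h\varphi)$, which since $b,q\in L^\infty(D)$ is bounded by $c\|\psi_h\|_U\|\varphi-R_h\varphi\|_{L^2(D)}\le c_1h^{\alpha-1}\|\psi_h\|_U\|\varphi\|_W$ by the $L^2(D)$ estimate in Lemma \ref{lem:Ritz-C}; hence $\mathrm{I}\le c_1h^{\alpha-1}\|\psi_h\|_U$. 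For $\mathrm{II}$, the $W$-stability of $R_h$ from Lemma \ref{lem:Ritz-C} yields $\mathrm{II}\le c\sup_{\varphi\in W}A(\psi_h,R_h\varphi)/\|R_h\varphi\|_W\le c\sup_{\varphi_h\in W_h}A(\psi_h,\varphi_h)/\|\varphi_h\|_W$, the supremum being unaffected when $R_h\varphi=0$. Choosing $h_0$ so that $c_1h_0^{\alpha-1}=c_0/2$ (possible since $\alpha-1>1/2>0$) then lets me absorb $\mathrm{I}$ into the left-hand side and obtain the discrete inf-sup condition for every $h\le h_0$. Since $U_h$ and $W_h$ have the same finite dimension, the stiffness matrix of \eqref{discp-C} is square, so existence of a solution follows from uniqueness, and uniqueness is an immediate consequence of the discrete inf-sup condition just proved.

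I do not expect a genuine obstacle here: the argument is parallel to Lemma \ref{lem:disinfsup:riem}, and the two points to watch are merely that the continuous inf-sup for $A$ on $U\times W$ is actually available (this is precisely the role of Assumption \ref{ass:caputo}(a) plus compactness) and that the exponent $\alpha-1$ in the Ritz $L^2(D)$ bound is strictly positive so that the kickback constant $c_1h_0^{\alpha-1}$ can be driven below $c_0$; both hold for $\alpha\in(3/2,2)$. In contrast with Lemma \ref{lem:disinfsup:riem}, no auxiliary parameter $\beta$ is needed, because the adjoint (Caputo) problem enjoys full regularity pickup, so Lemma \ref{lem:Ritz-C} already supplies the clean rate $h^{\alpha-1}$.
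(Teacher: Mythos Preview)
Your proposal is correct and follows exactly the route the paper intends: it simply transports the kickback argument of Lemma~\ref{lem:disinfsup:riem} to the Caputo setting, replacing Lemma~\ref{lem:Ritz} by Lemma~\ref{lem:Ritz-C} and the continuous inf-sup on $U\times V$ by the one on $U\times W$ from Theorem~\ref{thm:wpreg-C}. Your observation that the cleaner exponent $h^{\alpha-1}$ (rather than $h^{\alpha-2+\beta}$) appears here, because the Caputo primal problem used in Lemma~\ref{lem:Ritz-C} has full regularity pickup, is also correct.
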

\begin{proof}
The proof is the same as that of Lemma \ref{lem:disinfsup:riem}, using Lemmas
\ref{lem:inf-sup-disc-cap-triv} and \ref{lem:Ritz-C}.
\end{proof}

\begin{lemma}\label{lem:adjoint-error-C}
Let Assumption \ref{ass:caputo} hold, $f\in L^2(D)$, $b \in W^{1,\infty}(D)$ and
$q\in L^\infty(D)$. Let $w$ be the solution of  problem \eqref{eqn:var-adj-C}. Then there holds
\begin{equation*}
    \inf_{\psi_h \in W_h} \|w-\psi_h\|_{L^2(D)} + \inf_{\psi_h \in W_h}\|w-\psi_h\|_W 
    \le c h \|f\|_{L^2(D)}.
\end{equation*}
\end{lemma}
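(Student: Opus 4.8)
The plan is to run the proof of Lemma~\ref{lem:adjoint-error-RL} essentially line by line, substituting the test space $W_h$ and the projection $P_W$ for $V_h$ and $P_V$, and invoking Theorem~\ref{thm:reg-adj-C} in place of Theorem~\ref{thm:reg-RL-adj}; this substitution is legitimate because, as the remark preceding Theorem~\ref{thm:reg-adj-C} records, the two adjoint problems are both of Riemann--Liouville type and enjoy identical regularity pickup. First I would recall from the discussion preceding Theorem~\ref{thm:reg-adj-C} the solution decomposition $w = w^r + w^s$ with $w^s = c_f(1-x)^{\alpha-1}$ and $w^r = -{_x\hspace{-0.3mm}I_1^{\alpha}}\widetilde f$, where $\widetilde f = f + (bw)' - qw$ and $c_f$ is the constant enforced by $(w,x^{1-\alpha})=0$, satisfying $|c_f| \le c\|f\|_{L^2(D)}$. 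Since $w \in \Hdi1{\alpha-1+\beta} \subset \Hdi1{1}$ for $\beta\in[2-\alpha,1/2)$ (because $\alpha-1+\beta\ge 1$), and $b\in W^{1,\infty}(D)$, $q\in L^\infty(D)$, we get $bw\in\Hdi1{1}$ and hence $\widetilde f\in L^2(D)$ with $\|\widetilde f\|_{L^2(D)}\le c\|f\|_{L^2(D)}$; then Theorem~\ref{thm:fracop}(c) yields $w^r \in \Hdi1{\alpha}$ with $\|w^r\|_{\Hdi1{\alpha}}\le c\|f\|_{L^2(D)}$.

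Next I would set $\varphi_{w^r} = \DDR1{\alpha-1}w^r$, $\varphi_{w^s} = \DDR1{\alpha-1}w^s$, $\varphi_w = \varphi_{w^r}+\varphi_{w^s}$, and take the trial function $\psi_h := {_x\hspace{-0.3mm}I_1^{\alpha-1}}(P_W\Pi_0\varphi_w)$, which lies in $W_h$ by the construction recorded in the proof of Lemma~\ref{fem-interp-U}. Since $w \in W\cap\Hdi1{1}$, Lemma~\ref{lem:PvPw} gives $P_W\varphi_w = \varphi_w$, so the $L^2(D)$-stability of $P_W$ reduces the entire estimate to bounding $\|\varphi_w - \Pi_0\varphi_w\|_{L^2(D)}$. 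For the regular part, a short computation using Lemma~\ref{l:caprl} and Theorem~\ref{thm:fracop}(c) shows $\varphi_{w^r}=\DDR1{\alpha-1}w^r \in \Hdi1{1}$ with $\|\varphi_{w^r}\|_{\Hdi1{1}}\le c\|w^r\|_{\Hdi1{\alpha}}$, so the approximation property of $\Pi_0$ gives $\|\varphi_{w^r}-\Pi_0\varphi_{w^r}\|_{L^2(D)}\le ch\|\varphi_{w^r}\|_{\Hdi1{1}}\le ch\|f\|_{L^2(D)}$; for the singular part, $\varphi_{w^s} = c_f\DDR1{\alpha-1}(1-x)^{\alpha-1}$ is a constant, which $\Pi_0$ reproduces exactly, so that contribution vanishes. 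Hence $\|\varphi_w - P_W\Pi_0\varphi_w\|_{L^2(D)}\le ch\|f\|_{L^2(D)}$.

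Finally I would read off the two claimed bounds exactly as in Lemma~\ref{lem:adjoint-error-RL}: the $W$-norm estimate follows from $\inf_{\psi_h\in W_h}\|w-\psi_h\|_W \le \|\DDR1{\alpha-1}(w-\psi_h)\|_{L^2(D)} = \|\varphi_w - P_W\Pi_0\varphi_w\|_{L^2(D)}$, and the $L^2(D)$ estimate from the identity $w = {_x\hspace{-0.3mm}I_1^{\alpha-1}}\varphi_w$ (valid since $w\in\Hdi1{1}$), which gives $w-\psi_h = {_x\hspace{-0.3mm}I_1^{\alpha-1}}(\varphi_w - P_W\Pi_0\varphi_w)$ and then the boundedness of ${_x\hspace{-0.3mm}I_1^{\alpha-1}}$ on $L^2(D)$. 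The only genuinely delicate point, and it is the same one already handled for Theorem~\ref{thm:reg-adj-C}, is the bootstrap showing $w^r\in\Hdi1{\alpha}$: one must confirm that the a priori regularity $w\in\Hdi1{1}$ together with $b\in W^{1,\infty}(D)$, $q\in L^\infty(D)$ really forces $\widetilde f = f+(bw)'-qw\in L^2(D)$ with norm controlled by $\|f\|_{L^2(D)}$, and that replacing the endpoint constraint $w(0)=0$ of the Riemann--Liouville case by the integral constraint $(w,x^{1-\alpha})=0$ does not spoil the estimate $|c_f|\le c\|f\|_{L^2(D)}$; both were settled in the analysis preceding Theorem~\ref{thm:reg-adj-C}.
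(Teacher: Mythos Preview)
Your proposal is correct and is precisely the approach the paper takes: the paper's own proof of this lemma consists of a single sentence stating that it is identical to that of Lemma~\ref{lem:adjoint-error-RL} with $P_W$ in place of $P_V$, and you have spelled out exactly that substitution in full detail. The ingredients you invoke (the decomposition $w=w^r+w^s$ from the discussion preceding Theorem~\ref{thm:reg-adj-C}, the identity $P_W\varphi_w=\varphi_w$ from Lemma~\ref{lem:PvPw} for $w\in W\cap\Hdi1{1}$, and the constancy of $\varphi_{w^s}$) are all the right ones.
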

\begin{proof}
The proof is identical with that of Lemma \ref{lem:adjoint-error-RL}, with $P_W$ in place of $P_V$.
\end{proof}

\begin{theorem}\label{thm:femc}
Let $s\in[0,1/2)$ and Assumption \ref{ass:caputo} hold. Suppose $f\in H^s(D)$, $b \in W^{1,\infty}(D)$
and $q\in L^\infty(D)\cap H^s(D)$. Then there exists an $h_0$ such
that for all $h\le h_0$, the solution $u_h$ to the finite element problem \eqref{discp-C} satisfies
\begin{equation*}
   \|u-u_h\|_{L^2(D)} + h\|u-u_h\|_U  \le ch^{\min(\alpha+s,2)} \|f\|_{H^{s}(D)}.
\end{equation*}
\end{theorem}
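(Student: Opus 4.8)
The plan is to mimic the proof of Theorem~\ref{thm:femrl}, replacing the Riemann-Liouville ingredients with their Caputo counterparts and tracking the improved regularity afforded by $f\in H^s(D)$. First I would establish the $\Hd1$-estimate via C\'ea's lemma: by the discrete inf-sup condition from Lemma~\ref{lem:disinfsup:caputo} together with Galerkin orthogonality, $\|u-u_h\|_U\le c\inf_{\psi_h\in U_h}\|u-\psi_h\|_U$. Then Theorem~\ref{thm:regcap} gives $u\in H^{\alpha+s}(D)\cap\Hd1$ with $\|u\|_{H^{\alpha+s}(D)}\le c\|f\|_{H^s(D)}$, so the first approximation estimate in Lemma~\ref{fem-interp-U} (with $\gamma=\min(\alpha+s,2)$) yields $\|u-u_h\|_U\le ch^{\min(\alpha+s,2)-1}\|f\|_{H^s(D)}$, which is the $H^1(D)$ bound multiplied by $h^{-1}$.

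For the $L^2(D)$-estimate I would use an Aubin--Nitsche duality argument. Let $w\in W$ solve the adjoint problem \eqref{eqn:var-adj-C} with data $F=u-u_h\in U^*$ (identified with the $L^2(D)$ pairing). Then
\begin{equation*}
  \|u-u_h\|_{L^2(D)}^2 = A(u-u_h,w) = A(u-u_h,w-\psi_h)\quad\text{for any }\psi_h\in W_h,
\end{equation*}
by Galerkin orthogonality \eqref{eqn:fem-C}. Bounding the bilinear form $A$ in terms of $\|u-u_h\|_U$ and $\|w-\psi_h\|_W$, and invoking the adjoint approximation estimate $\inf_{\psi_h\in W_h}\|w-\psi_h\|_W\le ch\|f\|_{L^2(D)}$ from Lemma~\ref{lem:adjoint-error-C}, gives $\|u-u_h\|_{L^2(D)}^2\le c\,\|u-u_h\|_U\cdot h\,\|u-u_h\|_{L^2(D)}$, whence $\|u-u_h\|_{L^2(D)}\le ch\,\|u-u_h\|_U\le ch^{\min(\alpha+s,2)}\|f\|_{H^s(D)}$.

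The main obstacle is making the boundedness of $A(u-u_h,w-\psi_h)$ rigorous at the borderline regularity. The leading term $a(u-u_h,w-\psi_h)=-(({u-u_h})',\,\DDR1{\alpha-1}(w-\psi_h))$ is controlled by $\|u-u_h\|_U\|w-\psi_h\|_W$ directly; the convection term $(b(u-u_h)',w-\psi_h)$ needs $\|u-u_h\|_U\|w-\psi_h\|_{L^2(D)}$ and hence the $L^2(D)$ part of Lemma~\ref{lem:adjoint-error-C}, while the potential term $(q(u-u_h),w-\psi_h)$ is handled similarly but is of lower order. One must check that the $L^2(D)$ and $W$ approximation errors for $w$ are both $O(h)$ simultaneously with a single choice of $\psi_h\in W_h$ --- this is exactly what Lemma~\ref{lem:adjoint-error-C} (modeled on Lemma~\ref{lem:adjoint-error-RL}) provides, since there $\psi_h={_x\hspace{-0.3mm}I_1^{\alpha-1}}P_W\Pi_0\varphi_w$ serves both estimates. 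The restriction $h\le h_0$ is inherited from Lemma~\ref{lem:disinfsup:caputo}, and the truncation $\min(\alpha+s,2)$ reflects the fact that continuous piecewise linears cannot exploit solution regularity beyond $H^2(D)$. I would also note that when $\alpha+s>2$ the rate saturates at $O(h^2)$ in $L^2(D)$ and $O(h)$ in $H^1(D)$, as for the classical case $\alpha=2$.
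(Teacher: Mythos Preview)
Your proposal is correct and follows essentially the same route as the paper's proof: C\'ea's lemma via the discrete inf-sup condition (Lemma~\ref{lem:disinfsup:caputo}) combined with Theorem~\ref{thm:regcap} and Lemma~\ref{fem-interp-U} for the $H^1(D)$-estimate, then Aubin--Nitsche duality using Lemma~\ref{lem:adjoint-error-C} for the $L^2(D)$-estimate. Your worry about bounding the lower-order terms of $A(u-u_h,w-\psi_h)$ is unnecessary: since $W=\Hdi1{\alpha-1}\hookrightarrow L^2(D)$ continuously for $\alpha>3/2$, the full bilinear form satisfies $|A(v,\varphi)|\le c\|v\|_U\|\varphi\|_W$, so only the $W$-norm estimate from Lemma~\ref{lem:adjoint-error-C} is needed and the simultaneous-$\psi_h$ issue does not arise.
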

\begin{proof}
The $H^1(D)$-estimate follows directly from Cea's lemma and Lemma \ref{lem:adjoint-error-C}
as in Theorem \ref{thm:femrl}. With $w$ being the solution to problem \eqref{eqn:var-adj-C}
with  $F=u-u_h$, by Theorem \ref{thm:regcap}, Lemmas \ref{fem-interp-U} and \ref{lem:adjoint-error-C}, we deduce
\begin{equation*}
\begin{split}
    \| u-u_h \|_{L^2(D)}^2 &= A( u-u_h,w)
    \le c\|u-u_h\|_U \inf_{\varphi_h \in W_h}\|w-\varphi_h\|_W \\
    & \le ch^{\min(\alpha+s,2)} \| f \|_{H^{s}(D)}\| u-u_h  \|_{L^2(D)}.
\end{split}
\end{equation*}
\end{proof}

\subsection{Numerical implementation}
Now we briefly discuss the efficient implementation of the Petrov-Galerkin FEM on a uniform mesh, especially the
computation of the stiffness matrix $\mathbf{S}=\mathbf{A}+\mathbf{R}:=[a_{i,j}]+[r_{i,j}]$, with
$$  a_{i,j} = -(\psi_j',~\DDR1{\alpha-1}  \varphi_i) ~~\text{and}~~ r_{i,j}=(b\psi_j', \varphi_i) + (q\psi_j, \varphi_i),$$
where $\psi_j$ and $\varphi_i$ are the basis functions in $U_h$ and in $V_h$ or $W_h$, respectively.
In the space $U_h$, we choose the nodal basis function $\{\psi_j\}$:
\begin{equation*}
 \psi_j= \left\{ \begin{array}{rcl}
                   (x-x_{j-1})/h  &\quad \text{for}~~ x\in [x_{j-1},x_j), \\
                   (x_{j+1}-x)/h  &\quad \text{for}~~ x\in[x_{j},x_{j+1}), \\
                   0  & \quad \text{otherwise,}
                \end{array}\right.
\end{equation*}
with $j=1,2,..,m-1$.
Now, we set the basis function $\varphi_i$ of $V_h$ by
\begin{equation*}
  \varphi_i= (x_{i}-x)^{\alpha-1}\chi_{[0,x_i]}-x_{i}^{\alpha-1}(1-x)^{\alpha-1}, \quad ~~i=1,2,..,m-1.
\end{equation*}
Clearly, $\varphi_i\in V_h$, and $\DDR1{\alpha-1} \varphi_i$ is piecewise constant
$-\DDR1{\alpha-1} \varphi_i = -\Gamma(\alpha) \chi_{[0,x_i]} + x_{i}^{\alpha-1}\Gamma(\alpha).$
Hence we have
\begin{equation}\label{eqn:matA}
  a_{i,j}= -(\psi_j',~\DDR1{\alpha-1} \varphi_i) = -\Gamma(\alpha)\delta_{ij}/h,
\end{equation}
where $\delta_{ij}$ is the Kronecker symbol. That is, the matrix $\mathbf{A}$ is a multiple
of the identity matrix, which is one distinct feature of the proposed approach. Hence, the
resulting linear system is well conditioned, and $\mathbf{A}$ can be used as a preconditioner,
if desired. The matrix $\mathbf{R}$ can be accurately computed using quadrature rules.

Likewise, we define the basis function $\varphi_i$ in $W_h$ by
\begin{equation*}
  \varphi_i= (x_{i}-x)^{\alpha-1}\chi_{[0,x_i]}-x_{i} (1-x)^{\alpha-1} \quad \text{with}   ~~i=1,2,..,m-1.
\end{equation*}
By $(\varphi_i,x^{1-\alpha})=0$, $\varphi_i \in W_h$, and thus \eqref{eqn:matA} holds also in the Caputo case.

\section{An enriched FEM in the Riemann-Liouville case}\label{sec:sing}
By Theorem \ref{thm:femrl}, in the Riemann-Liouville case the FEM can only converge slowly,
due to the presence of the singular term $x^{\alpha-1}$. Now we discuss how to improve the
convergence, using an idea first introduced in \cite{CaiKim:2001} for the Poisson equation
on an L-shaped domain, and then extended to FBVPs in \cite{JinZhou:2014}. Below we only sketch
the technique and state the result, since the proofs are analogous to \cite{JinZhou:2014}.

This technique is to split the solution $u$ to problem \eqref{eqn:fde} into a regular
part $u^r$ and a singular part involving $x^{\alpha-1}$ (with $u^s=x^{\alpha-1}-x^2$):
\begin{equation*}
  u(x)=u^r + \mu u^s.
\end{equation*}
We shall assume ${_0\hspace{-0.3mm}I^\alpha_x} (b(u^s)'+qu^s) (1) \neq -1.$
Otherwise, we may replace the choice $x^2$ by any other function $v$ in the space $\Hdi0s$, $s\geq2$, with $v(1)=1$, such
that ${_0\hspace{-0.3mm}I^\alpha_x} (b(x^{\alpha-1}-v)'+q(x^{\alpha-1}-v)) (1) \neq -1$.
Then the regular part $u^r$ is given by
\begin{equation}\label{eqn:rep-ur}
u^r=-{_0\hspace{-0.3mm}I^\alpha_x} (f-bu'-qu)  + ({_0\hspace{-0.3mm}I^\alpha_x} (f-bu'-qu))(1) x^2,
\end{equation}
and the singularity strength $\mu$ is given by $\mu=c_0 \left({_0\hspace{-0.3mm}I^\alpha_x}
(f-b(u^r)'-qu^r)\right) (1),$ where $c_0= 1/(1+{_0\hspace{-0.3mm}
I^\alpha_x}(b(u^s)'+qu^s)(1)).$  The regular part $u^r$ satisfies
\begin{equation*}
  \begin{aligned}
    -\DDR 0 \alpha u^r + b(u^r)'+qu^r + \left({_0\hspace{-0.3mm}I^\alpha_x}(b(u^r)'+qu^r)\right) (1)Q  = \tilde {f} \quad \mbox{in } D,\\
   \end{aligned}
\end{equation*}
with  $u^r(0)=u^r(1)=0$, where the functions $Q$ and $\tilde{f}$ are defined by
$Q =c_0 {\DDR 0 \alpha} u^s -c_0b(u^s)'-c_0qu^s\in L^{2}(D)$ and
$\widetilde{f}=f+c_0\left({_0\hspace{-0.3mm}I^\alpha_x}f\right)(1)({\DDR 0 \alpha} u^s-b(u^s)'-qu^s)\in L^2(D)$,
respectively. Now we introduce a bilinear form $A_r(\cdot,\cdot):U\times V\to\mathbb{R}$ by
\begin{equation*}
    A_r(u,\varphi) = a(u,\varphi)+ b(u,\varphi),
\end{equation*}
with $  b(u,\varphi)= (bu'+qu,\varphi)+ {_0\hspace{-0.3mm}I^\alpha_x}(bu'+qu)(1)(Q,\varphi).$
Then the variational formulation of the regular part $u^r$ is to find $u^r \in U$ such that
\begin{equation}\label{eqn:var2}
 A_r(u^r,\varphi)= (\widetilde{f},\varphi)\quad \forall \varphi\in V.
\end{equation}

The following assumption on $A_r(\cdot,\cdot)$ is analogous to Assumption \ref{ass:riem}.
\begin{assumption} \label{ass:riem2}
Let the bilinear form $A_r(\cdot,\cdot): U\times V\to\mathbb{R}$ satisfy
\begin{itemize}
 \item[{$\mathrm{(a)}$}]  The problem of finding $u \in U$ such that $A_r(u,\varphi)=0$ for all $\varphi \in V$
           has only the trivial solution $u\equiv 0$.
 \item[{$(\mathrm{a}^\ast)$}] The problem of finding $\varphi \in V$ such that $A_r(u,\varphi)=0$ for all $u \in U$
    has only the trivial solution $\varphi \equiv 0$.
\end{itemize}
\end{assumption}

Under Assumption \ref{ass:riem2}, problem \eqref{eqn:var2} is stable and has extra regularity pickup.
\begin{theorem}\label{thm:regrl-new}
Let Assumption \ref{ass:riem2} hold, $b, q\in \Hd \gamma \cap L^\infty(D)$ and $f\in \Hdi0{\gamma}$
with $\gamma>\alpha-3/2$. Then there exists a unique solution $u^r\in H^{2\alpha-2+\beta}(D)\cap \Hd 1$,
$\beta\in[2-\alpha,1/2)$, to problem \eqref{eqn:var2} and further, it satisfies
\begin{equation*}
  \|u^r\|_{H^ {2\alpha-2+\beta}(D)} \le c\|f\|_{\Hdi 0\gamma}.
\end{equation*}
\end{theorem}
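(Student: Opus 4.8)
The plan is to follow the same two-stage strategy used for Theorem \ref{thm:wpreg} and Theorem \ref{thm:reg-RL}: first establish well-posedness of \eqref{eqn:var2} via a compact-perturbation argument, then bootstrap regularity through the solution representation \eqref{eqn:rep-ur}. For the first stage, I would compare $A_r(\cdot,\cdot)$ with the coercive leading form $a(\cdot,\cdot)$ from \eqref{eqn:a}. Write $A_r(u,\varphi)=a(u,\varphi)+b(u,\varphi)$, and observe that the perturbation $b(u,\varphi)=(bu'+qu,\varphi)+{_0\hspace{-0.3mm}I^\alpha_x}(bu'+qu)(1)(Q,\varphi)$ is bounded from $U\times V$ to $\mathbb R$ and factors through a compact operator from $U$ into $V^*$: the map $u\mapsto bu'+qu$ sends $U=\Hd1$ into $L^2(D)$, which embeds compactly into $V^*$, and the rank-one term involving ${_0\hspace{-0.3mm}I^\alpha_x}(\cdot)(1)$ is a bounded linear functional on $L^2(D)$ (by Theorem \ref{thm:fracop}(c), $_0I^\alpha_x$ maps $L^2(D)$ into $\Hdi0\alpha\hookrightarrow C(\bar D)$ since $\alpha>3/2$) times the fixed function $Q\in L^2(D)$, hence also compact into $V^*$. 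Then, exactly as in the proof of Theorem \ref{thm:wpreg}, Lemma \ref{lem:inf-sup-a} gives $\|u\|_U\le c(\|S_ru\|_{V^*}+\|T_ru\|_{V^*})$ where $S_r$ encodes $A_r$ and $T_r$ the compact perturbation; Petree–Tartar's lemma then yields the inf-sup condition $c_0\|u\|_U\le\sup_{\varphi\in V}A_r(u,\varphi)/\|\varphi\|_V$, and Assumption \ref{ass:riem2}(a$^*$) upgrades this to bijectivity. So a unique $u^r\in U=\Hd1$ exists with $\|u^r\|_{\Hd1}\le c\|\widetilde f\|_{V^*}\le c\|f\|_{\Hdi0\gamma}$, using that $\widetilde f\in L^2(D)$ depends boundedly on $f$.

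For the regularity stage, I would feed the $H^1$ bound back into \eqref{eqn:rep-ur}. Since $u^r\in\Hd1$ and $b\in\Hd\gamma\cap L^\infty(D)$, $q\in\Hd\gamma\cap L^\infty(D)$, Lemma \ref{lem:Hsalg} (for the relevant $s$) shows $b(u^r)'+qu^r\in\Hd{s}$ for an appropriate $s>0$ tied to $\gamma$; combined with $f\in\Hdi0\gamma$, the right-hand side $f-b(u^r)'-qu^r$ lies in $\Hdi0{\min(\gamma,\cdots)}$. Applying $_0I^\alpha_x$ via Theorem \ref{thm:fracop}(c) gains $\alpha$ orders of smoothness, so the first term of \eqref{eqn:rep-ur} lands in $\Hdi0{\alpha+\text{(smoothness of RHS)}}$; the second term is a smooth multiple of $x^2\in\Hdi0s$ for all $s<5/2$. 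The claimed exponent $2\alpha-2+\beta$ with $\beta\in[2-\alpha,1/2)$, i.e. smoothness just below $\alpha+\tfrac12$, is what one gets because the regular-part right-hand side has smoothness essentially $\alpha-\tfrac32^+$ at best (this is the content of the hypothesis $\gamma>\alpha-3/2$), and $\alpha+(\alpha-3/2)=2\alpha-3/2$, matching $2\alpha-2+\beta$ at $\beta\to1/2$. One must check the $x^2$ term does not limit the rate: $\alpha<2$ forces $2\alpha-2+\beta<2+\tfrac12$, so $x^2$ is always smooth enough. Finally assemble the norm bound by chaining the $H^1$ estimate, the algebra estimate of Lemma \ref{lem:Hsalg}, and the mapping property of $_0I^\alpha_x$.

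The main obstacle I anticipate is bookkeeping the exact Sobolev exponent through the product rule: Lemma \ref{lem:Hsalg} requires $s\le1$ and $s\neq1/2$, and the differentiation $u^r\mapsto(u^r)'$ costs one order, so one cannot naively claim $b(u^r)'\in\Hd\gamma$ — one only has $(u^r)'\in L^2(D)$ a priori. The resolution is the same bootstrap as in Theorem \ref{thm:reg-RL-adj}: first use the crude bound to get $u^r\in\Hdi0{\alpha-1+\beta}$ (hence $(u^r)'\in\Hdi0{\alpha-2+\beta}=H^{\alpha-2+\beta}$ with $\alpha-2+\beta$ possibly slightly negative, handled by duality as in \eqref{int-weak-bound}), then multiply by $b\in\Hd\gamma$ with $\gamma>\alpha-3/2$ so that $\gamma+(\alpha-2+\beta)$ is controlled, and iterate once more if needed until the exponent stabilizes at $2\alpha-2+\beta$. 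Getting the interplay between the negative-index spaces, the duality extension of $_xI_1^\gamma$ already set up before Lemma \ref{lem:adj-RL}, and the constraint $\beta\in[2-\alpha,1/2)$ exactly right is the delicate part; everything else is a routine transcription of the earlier arguments, which is presumably why the authors say the proof is "analogous to \cite{JinZhou:2014}."
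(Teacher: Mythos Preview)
Your well-posedness argument is exactly the paper's: the proof there reads ``The proof of the stability is similar to that in Section~\ref{subsec:var-RL},'' i.e., Petree--Tartar applied to $A_r=a+b$ with $b$ a compact perturbation, just as you describe. The observation that the nonlocal rank-one term ${_0I_x^\alpha}(bu'+qu)(1)(Q,\cdot)$ is compact into $V^*$ is the right additional ingredient.

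For the regularity step you have misread the representation \eqref{eqn:rep-ur}: it expresses $u^r$ in terms of the \emph{full} solution $u$ of the original problem \eqref{eqn:fde}, namely $u^r=-{_0I_x^\alpha}(f-bu'-qu)+({_0I_x^\alpha}(f-bu'-qu))(1)\,x^2$, not in terms of $u^r$ itself. This is why the paper's proof invokes Theorem~\ref{thm:reg-RL}: that theorem already gives $u\in \Hdi0{\alpha-1+\beta}\cap\Hd1$, so $u'\in H^{\alpha-2+\beta}(D)$, and a single application of Theorem~\ref{thm:fracop}(c) to \eqref{eqn:rep-ur} yields $u^r\in H^{2\alpha-2+\beta}(D)$ directly --- no self-bootstrap on $u^r$ is needed. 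Your plan to iterate on $b(u^r)'+qu^r$ would require first deriving a fixed-point representation of $u^r$ in terms of itself from the strong form of \eqref{eqn:var2}, which is possible but is not what \eqref{eqn:rep-ur} says.

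That said, your worry about the product $bu'$ is well placed: since $u$ carries the $x^{\alpha-1}$ singularity, $u'\notin L^\infty(D)$ and Lemma~\ref{lem:Hsalg} does not apply verbatim. The paper's two-line proof does not spell this out. One clean way through is to split $u=u^r+\mu u^s$ on the right of \eqref{eqn:rep-ur}: the contribution $b(u^s)'+qu^s$ is explicit and lies in $H^{s}(D)$ for any $s<\alpha-3/2$ (this is precisely where the hypothesis $\gamma>\alpha-3/2$ on $b,q$ is used), while for the $u^r$ contribution a preliminary pass through \eqref{eqn:rep-ur} with only $L^2$ data puts $u^r\in H^\alpha(D)$, hence $(u^r)'\in H^{\alpha-1}(D)\hookrightarrow L^\infty(D)$, after which Lemma~\ref{lem:Hsalg} does apply. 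So your instinct about the obstacle and its resolution is essentially right; you just need to route it through the correct form of \eqref{eqn:rep-ur}.
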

\begin{proof}
The proof of the stability is similar to that in Section \ref{subsec:var-RL}.
The regularity estimate follows from the representation \eqref{eqn:rep-ur} and Theorem \ref{thm:reg-RL}.
\end{proof}

Now we consider the discrete problem: find $u^r_h\in U_h$ such that
\begin{equation}\label{eqn:var2-d}
 A_r(u^r_h,\varphi_h)= (\widetilde{f},\varphi_h)\quad \forall \varphi_h\in V_h.
\end{equation}
Then we reconstruct $u_h$ by
\begin{equation}\label{eqn:muh}
   u_h = u^r_h + \mu_h u^s\quad \mbox{with}\quad \mu_h=c_0 \left({_0\hspace{-0.3mm}I^\alpha_x} (f-bu^r_h{'}-qu^r_h)\right) (1).
\end{equation}

Last, we state error estimates of the approximation $u_h$.
\begin{theorem}\label{thm:femrl-singrecon}
Let Assumption \ref{ass:riem2} hold, $b\in W^{1,\infty}(D)$, $q\in H^{1}(D)\cap
L^\infty(D)$ and $f\in \Hdi0{\gamma}$, $\gamma> \alpha-3/2$. Then there exists an $h_0$ such
that for all $h\le h_0$, the solution $u_h$ to problem \eqref{eqn:var2-d}-\eqref{eqn:muh}
satisfies for any $\beta\in(2-\alpha,1/2)$,
\begin{equation*}
   \|u-u_h\|_{L^2(D)} + h\|u-u_h\|_U  \le ch^{\min(2\alpha-2+\beta,2)} \|f\|_{\Hdi 0 \gamma}.
\end{equation*}
\end{theorem}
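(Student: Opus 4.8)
The plan is to follow the scheme of Theorems~\ref{thm:wpreg} and \ref{thm:femrl}, applied now to the regular part $u^r$ solving \eqref{eqn:var2}, and then to pass to $u=u^r+\mu u^s$ by controlling the reconstruction error $|\mu-\mu_h|$. First I would note that $A_r(\cdot,\cdot)=a(\cdot,\cdot)+b(\cdot,\cdot)$, where $b(u,\varphi)=(bu'+qu,\varphi)+{_0\hspace{-0.3mm}I^\alpha_x}(bu'+qu)(1)\,(Q,\varphi)$ induces a compact operator $U\to V^*$: the term $(bu'+qu,\varphi)$ is compact by $b,q\in L^\infty(D)$ together with the compact embedding $L^2(D)\hookrightarrow V^*$, exactly as in the proof of Theorem~\ref{thm:wpreg}, while $u\mapsto{_0\hspace{-0.3mm}I^\alpha_x}(bu'+qu)(1)\,(Q,\cdot)$ is bounded and of rank one, hence compact (recall $Q\in L^2(D)$). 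Combining Lemma~\ref{lem:inf-sup-a}, Assumption~\ref{ass:riem2} and Petree-Tartar's lemma as in Theorem~\ref{thm:wpreg} then gives the continuous inf-sup condition for $A_r$ on $U\times V$. Since the Ritz projection $R_h$ of Lemma~\ref{lem:Ritz} is built solely from $a(\cdot,\cdot)$, it is unaffected by the perturbation $b(\cdot,\cdot)$, so the kickback argument of Lemma~\ref{lem:disinfsup:riem} applies verbatim and yields, for all $h\le h_0$, the discrete inf-sup condition $c\|\psi_h\|_U\le\sup_{\varphi_h\in V_h}A_r(\psi_h,\varphi_h)/\|\varphi_h\|_V$ and the unique solvability of \eqref{eqn:var2-d}.

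Write $e:=u^r-u^r_h$. By Theorem~\ref{thm:regrl-new} (whose hypotheses hold, e.g.\ with $\gamma\in(\alpha-3/2,1/2)$, so that $\Hd \gamma=H^\gamma(D)$ contains $b$ and $q$), $u^r\in H^{2\alpha-2+\beta}(D)\cap\Hd 1$ with $\|u^r\|_{H^{2\alpha-2+\beta}(D)}\le c\|f\|_{\Hdi 0 \gamma}$, so Galerkin orthogonality for \eqref{eqn:var2}--\eqref{eqn:var2-d}, the discrete inf-sup (Cea's lemma), and Lemma~\ref{fem-interp-U} give
\begin{equation*}
\|e\|_U\le c\inf_{\psi_h\in U_h}\|u^r-\psi_h\|_U\le c\,h^{\min(2\alpha-3+\beta,\,1)}\|f\|_{\Hdi 0 \gamma}.
\end{equation*}
For the $L^2(D)$ bound I would run Aubin--Nitsche duality as in Theorem~\ref{thm:femrl}: let $w\in V$ solve the adjoint of \eqref{eqn:var2} with datum $e$, which is well-posed with $\|w\|_V\le c\|e\|_{L^2(D)}$ by Assumption~\ref{ass:riem2}$(a^*)$ and the Petree-Tartar argument. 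Transposing the rank-one term shows that $w$ solves a Riemann--Liouville adjoint problem of the type \eqref{eqn:var-RL-adj} whose right-hand side lies in $L^2(D)$ (it involves $(1-x)^{\alpha-1}$ and, since $b\in W^{1,\infty}(D)$ and $\alpha>3/2$, the square-integrable function $(1-x)^{\alpha-2}$), with $L^2(D)$ norm $\le c\|e\|_{L^2(D)}$. Hence Theorem~\ref{thm:reg-RL-adj} and Lemma~\ref{lem:adjoint-error-RL} apply and give $\inf_{\varphi_h\in V_h}\|w-\varphi_h\|_V\le c\,h\,\|e\|_{L^2(D)}$. With Galerkin orthogonality, $\|e\|_{L^2(D)}^2=A_r(e,w)=A_r(e,w-\varphi_h)\le c\|e\|_U\inf_{\varphi_h\in V_h}\|w-\varphi_h\|_V$, whence
\begin{equation*}
\|e\|_{L^2(D)}\le c\,h\,\|e\|_U\le c\,h^{\min(2\alpha-2+\beta,\,2)}\|f\|_{\Hdi 0 \gamma}.
\end{equation*}

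It remains to bound the reconstruction error. From \eqref{eqn:muh} and the definition of $\mu$,
\begin{equation*}
\mu-\mu_h=-c_0\,\big({_0\hspace{-0.3mm}I^\alpha_x}(be'+qe)\big)(1)=-c_0\Big(be'+qe,\tfrac{(1-x)^{\alpha-1}}{\Gamma(\alpha)}\Big).
\end{equation*}
Since $e(0)=e(1)=0$ (as $u^r,u^r_h\in\Hd 1$), integrating by parts in the term with $be'$ gives $\mu-\mu_h=-c_0\big(e,\,q\psi-(b\psi)'\big)$ with $\psi=\tfrac{(1-x)^{\alpha-1}}{\Gamma(\alpha)}$. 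Because $\psi'$ is a multiple of $(1-x)^{\alpha-2}\in L^2(D)$ for $\alpha\in(3/2,2)$, while $b\in W^{1,\infty}(D)$ and $q\in L^\infty(D)$, we have $q\psi-(b\psi)'\in L^2(D)$, whence
\begin{equation*}
|\mu-\mu_h|\le c\|e\|_{L^2(D)}\le c\,h^{\min(2\alpha-2+\beta,\,2)}\|f\|_{\Hdi 0 \gamma}.
\end{equation*}
Finally, $u-u_h=e+(\mu-\mu_h)u^s$ with $u^s\in\Hd 1\subset L^2(D)$, so by the triangle inequality $\|u-u_h\|_{L^2(D)}\le\|e\|_{L^2(D)}+|\mu-\mu_h|\,\|u^s\|_{L^2(D)}$ and $h\|u-u_h\|_U\le h\|e\|_U+h|\mu-\mu_h|\,\|u^s\|_U$, both of which are $\le c\,h^{\min(2\alpha-2+\beta,2)}\|f\|_{\Hdi 0 \gamma}$, proving the claim.

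The step I expect to be the main obstacle is the sharp bound on $|\mu-\mu_h|$: the naive estimate only gives $|\mu-\mu_h|\le c\|e\|_U$, which is half an order short of what the optimal $L^2(D)$ rate requires. The essential device is the integration by parts in $\big({_0\hspace{-0.3mm}I^\alpha_x}(be'+qe)\big)(1)$, which transfers the derivative from $e$ onto the singular weight $(1-x)^{\alpha-1}$, whose derivative is still square-integrable precisely because $\alpha>3/2$; this is what recovers the extra factor of $h$. A secondary technicality is checking that the rank-one perturbation $Q$ in $A_r$ does not disrupt the compactness/kickback step or the regularity of the adjoint problem, but this is routine once one notes $Q\in L^2(D)$ and that transposing the rank-one term produces only $L^2(D)$ data.
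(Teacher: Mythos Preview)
The paper does not actually give a proof of this theorem; Section~\ref{sec:sing} opens with ``we only sketch the technique and state the result, since the proofs are analogous to \cite{JinZhou:2014}'', and Theorem~\ref{thm:femrl-singrecon} is then stated without argument. So there is no paper proof to compare against line by line.

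Your proposal is correct and is exactly the route the paper implicitly points to: you reuse the compactness/Petree--Tartar argument of Theorem~\ref{thm:wpreg} for $A_r=a+b$ (the rank-one piece being compact because it is bounded and finite-rank), the kickback of Lemma~\ref{lem:disinfsup:riem} (which only uses $a$ and $R_h$), Cea's lemma together with Theorem~\ref{thm:regrl-new} and Lemma~\ref{fem-interp-U} for the $\Hd 1$ rate on $u^r-u^r_h$, and Aubin--Nitsche with Lemma~\ref{lem:adjoint-error-RL} for the $L^2(D)$ rate. Your reduction of the $A_r$-adjoint to a standard adjoint of type \eqref{eqn:var-RL-adj} is the right move: writing $\tilde w=w+(Q,w)\psi$ with $\psi=(1-x)^{\alpha-1}/\Gamma(\alpha)$, the strong form becomes $-\DDR 1 \alpha w-(bw)'+qw=e+(Q,w)[(b\psi)'-q\psi]$, and since $\psi'\propto(1-x)^{\alpha-2}\in L^2(D)$ for $\alpha>3/2$ the right-hand side is in $L^2(D)$ with norm $\le c\|e\|_{L^2(D)}$, so Theorem~\ref{thm:reg-RL-adj} and Lemma~\ref{lem:adjoint-error-RL} apply. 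The key nontrivial step, and the one that distinguishes the enriched analysis from Theorem~\ref{thm:femrl}, is precisely the one you flag: integrating by parts in $({_0\hspace{-0.3mm}I^\alpha_x}(be'+qe))(1)=(be'+qe,\psi)$ to obtain $|\mu-\mu_h|\le c\|e\|_{L^2(D)}$ rather than $c\|e\|_U$; this works because $e\in\Hd1$, $\psi(1)=0$, and $(b\psi)'\in L^2(D)$ for $\alpha>3/2$. That is exactly the mechanism used in \cite{JinZhou:2014}, to which the paper defers.
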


\section{Numerical results and discussions}\label{sec:numer}
Now we present numerical experiments to verify the convergence theory, and consider the following three examples:
\begin{itemize}
\item[(a)] The source term $f=x\in \Hdi0 {s}, \, s\in(1,3/2)$.
\item[(b)] The source term $f=1\in \Hdi0 {s}, \, s\in(0,1/2)$.
\item[(c)]  The source term $f=x^{-1/4}\in\Hdi0 {s},\, s\in(0,1/4)$.
\end{itemize}
The numerical results are computed on a uniform mesh with a mesh size $h=1/m$, $m\in \mathbb{N}$.
All the numerical experiments are performed on a personal computer with \texttt{MATLAB} 2014a.
In the case of $q,b\equiv0$, the exact solution is available in closed form, cf.
\eqref{eqn:solrep-RL} and \eqref{eqn:solrep-Cap}. In general, the analytic solution is not
available, and a reference solution is computed on a much finer mesh with a mesh size $h=1/5000$.

\subsection{Numerical results for example (a)}
The numerical results for case (a) with $b,q\equiv0$
are given in Tables \ref{tab:smooth-triv-RL} and \ref{tab:smooth-triv-C} for the Riemann-Liouville
and Caputo derivative, respectively. The notation \texttt{rate} in the tables refers to empirical
convergence rate, and the numbers in the bracket denote the theoretical predictions from Section
\ref{sec:fem}. The empirical rates agree well with the theoretical ones for all three
fractional orders. As the order $\alpha$ increases, the convergence rate in the $L^2(D)$
and $\Hd 1$-norm improves accordingly.
In the Riemann-Liouville case, despite the smoothness of the source term $f$, the solution
regularity is limited, due to the presence of the singularity $x^{\alpha-1}$. These observations
remain valid for the Caputo derivative, but the convergence rates are higher. The estimates
in Section \ref{sec:fem} are sharp for both derivatives. Further, we have the
following interesting observation: for $i=1,2,...,m-1$
\begin{equation*}
  \Gamma(\alpha) u(x_i) = (u',\,{\DDR1 {\alpha-1}}\psi_i) = -(f,\psi_i) = (u_h',\,{\DDR1 {\alpha-1}}\psi_i)=\Gamma(\alpha) u_h(x_i).
\end{equation*}
That is, the solution $u_h$ coincides with the $P1$ Lagrange interpolation of $u$.
This partly implies optimality of the convergence rates in Section \ref{sec:fem}.
The presence of a smooth $b$ and $q$ does not affect the convergence rates, cf.
Tables \ref{tab:smooth-RL} and \ref{tab:smooth-C}.

\begin{table}[hbt!]
\caption{Numerical results for example (a) with a Riemann-Liouville derivative and $b,q=0$, $\alpha=1.6, 1.75, 1.9$, $h=1/m$.}
\label{tab:smooth-triv-RL}
\vspace{-.3cm}
\begin{center}
     \begin{tabular}{|c|c|cccccc|c|}
     \hline
     $\alpha$  & $m$ &$10$ &$20$ &$40$ & $80$ & $160$ & $320$  &rate \\
     \hline
     $1.6$ & $L^2$ &3.10e-3 &1.39e-3 &6.42e-4 &2.99e-4 &1.39e-4 &6.47e-5 &$\approx$ 1.10 ($1.10$) \\

     & $H^1$   &1.67e-1 &1.50e-1 & 1.35e-1&  1.21e-1& 1.07e-1& 9.33e-2 &$\approx$ 0.17 ($0.10$)\\
    \hline
     $1.75$ & $L^2$    &1.25e-3 &4.62e-4 &1.84e-4 &7.55e-5 &3.15e-5 & 1.32e-5 & $\approx$ 1.27 ($1.25$)\\

     & $H^1$    & 5.03e-2 &3.89e-2 &3.14e-2 &2.57e-2 &2.10e-2 & 1.70e-2 &$\approx$ 0.29 ($0.25$)\\
     \hline
     $1.9$ & $L^2$    &6.40e-4 &1.72e-4 &4.92e-5 &1.53e-5 &5.14e-6 & 1.83e-6 & $\approx$ 1.53 ($1.40$)\\

     & $H^1$    & 2.08e-2 &1.15e-2 &6.81e-3 &4.38e-3 &3.01e-3 & 2.14e-3 &$\approx$ 0.50 ($0.40$)\\
     \hline
     \end{tabular}
\end{center}
\end{table}

\begin{table}[hbt!]
\caption{Numerical results for example (a) with a Caputo derivative and $b,q=0$, $\alpha=1.6, 1.75, 1.9$, $h=1/m$.}
\label{tab:smooth-triv-C}
\vspace{-.3cm}
\begin{center}
     \begin{tabular}{|c|c|cccccc|c|}
     \hline
     $\alpha$  & $m$ &$10$ &$20$ &$40$ & $80$ & $160$ & $320$  &rate \\
     \hline
     $1.6$ & $L^2$ &6.88e-4 &1.72e-4 &4.30e-5 &1.08e-5 &2.69e-6 &6.71e-7 &$\approx$ 2.00 ($2.00$) \\

     & $H^1$   &2.18e-2 &1.09e-2 &5.45e-3 &2.72e-3 &1.33e-3 & 6.47e-4 &$\approx$ 1.02 ($1.00$)\\
    \hline
     $1.75$ & $L^2$    &6.28e-4 &1.57e-4 &3.93e-5 &9.81e-6 &2.45e-6 & 6.12e-7 & $\approx$ 2.00 ($2.00$)\\

     & $H^1$    & 1.99e-2 &9.93e-3 &4.97e-3 &2.48e-3 &1.22e-3 & 5.90e-4 &$\approx$ 1.02 ($1.00$)\\
     \hline
     $1.9$ & $L^2$    &5.67e-4 &1.42e-4 &3.54e-5 &8.86e-6 &2.21e-6 & 5.53e-7 & $\approx$ 2.00 ($2.00$)\\

     & $H^1$    & 1.79e-2 &8.97e-3 &4.48e-3 &2.24e-3 &1.10e-3 & 5.33e-4 &$\approx$ 1.02 ($1.00$)\\
     \hline
     \end{tabular}
\end{center}
\end{table}

\begin{table}[hbt!]
\caption{Numerical results for example (a) with a Riemann-Liouville derivative and $b=e^x$, $q=x(1-x)$, $\alpha=1.6, 1.75, 1.9$, $h=1/m$.}
\label{tab:smooth-RL}
\vspace{-.3cm}
\begin{center}
     \begin{tabular}{|c|c|cccccc|c|}
     \hline
     $\alpha$  & $m$ &$10$ &$20$ &$40$ & $80$ & $160$ & $320$  &rate \\
     \hline
     $1.6$ & $L^2$ &2.67e-3 &9.41e-4 &3.89e-4 &1.74e-4 &8.01e-5 &3.69e-5 &$\approx$ 1.13 ($1.10$) \\

     & $H^1$   &1.22e-1 &9.14e-2 & 7.93e-2&  6.65e-2& 5.81e-2& 5.00e-2 &$\approx$ 0.20 ($0.10$)\\
    \hline
    $1.75$ & $L^2$    &1.23e-3 &3.69e-4 &1.28e-4 &4.92e-5 &2.00e-5 & 8.29e-6 & $\approx$ 1.29 ($1.25$)\\

     & $H^1$    & 5.25e-2 &3.18e-2 &2.18e-2 &1.65e-2 &1.31e-2 & 1.04e-2 &$\approx$ 0.33 ($0.25$)\\
     \hline
     $1.9$ & $L^2$    &7.49e-4 &1.92e-4 &5.05e-5 &1.40e-5 &4.20e-6 & 1.37e-6 & $\approx$ 1.66 ($1.40$)\\

     & $H^1$    & 3.02e-2 &1.55e-2 &8.10e-3 &4.44e-3 &2.61e-3 & 1.65e-3 &$\approx$ 0.70 ($0.40$)\\
     \hline
     \end{tabular}
\end{center}
\end{table}

\begin{table}[hbt!]
\caption{Numerical results for example (a) with a Caputo derivative and $b=e^x$, $q=x(1-x)$, $\alpha=1.6, 1.75, 1.9$, $h=1/m$.}
\label{tab:smooth-C}
\vspace{-.3cm}
\begin{center}
     \begin{tabular}{|c|c|cccccc|c|}
     \hline
     $\alpha$  & $m$ &$10$ &$20$ &$40$ & $80$ & $160$ & $320$  &rate \\
     \hline
     $1.6$ & $L^2$ & 1.91e-3 &4.92e-4 &1.25e-4 &3.18e-5 &8.03e-6 &2.01e-6 &$\approx$ 1.99 ($2.00$) \\

     & $H^1$   &7.12e-2 &3.59e-2 &1.80e-2 &9.00e-3 &4.50e-3 &2.12e-3 &$\approx$ 1.03 ($1.00$)\\
    \hline
     $1.75$ & $L^2$    &1.03e-3 &2.59e-4 &6.49e-5 &1.62e-5 &4.06e-6 & 1.01e-6 & $\approx$ 2.00 ($2.00$)\\

     & $H^1$    & 4.18e-2 &2.10e-2 &1.05e-2 &5.27e-3 &2.63e-3 & 1.24e-3 &$\approx$ 1.00 ($1.00$)\\
     \hline
     $1.9$ & $L^2$   &7.22e-4 &1.81e-4 &4.53e-5 &1.13e-5 &2.83e-6 &7.04e-7 & $\approx$ 2.00 ($2.00$)\\

     & $H^1$    & 2.88e-2 &1.45e-2 &7.25e-3 &3.62e-3 &1.81e-3 & 8.52e-4 &$\approx$ 1.02 ($1.00$)\\
     \hline
     \end{tabular}
\end{center}
\end{table}

One distinct feature of the proposed approach is that the stiffness matrix for the leading
term is diagonal,  and the resulting linear system is well conditioned. To illustrate this, we
give in Table \ref{tab:cond} the condition numbers of the stiffness matrix for
$\alpha=1.55$, $1.75$ and $1.95$. It is observed that for either derivative, it is fairly small
for the whole range of fractional orders, and  independent of the
mesh size $h$. These results fully confirm the observations in Section 4.4.

\begin{table}[hbt!]
\centering
\caption{The condition number of the linear system for $b(x)=e^x$, $q(x)=x(1-x)$, $\alpha=1.55,\ 1.75$ and $1.95$, $h=1/m$. }\label{tab:cond}
\vspace{-.2cm}
\begin{tabular}{|c|c|ccccccc|}
\hline
 Deriv. type& $\alpha\backslash m$ & 20& 40  & 80  & 160 & 320& 640&1280\\
\hline
         &1.55 &  2.98  & 3.48 & 4.26 & 4.30 & 4.57 &4.84&5.00 \\
R-L        &1.75 &  2.06  & 2.22 & 2.33 & 2.40 & 2.45 &2.48&2.50\\
        & 1.95& 1.63 & 1.68 & 1.71 & 1.73& 1.74 &1.74&1.75\\
\hline
         &1.55 & 2.75& 3.20  & 3.57 & 3.89 & 4.16& 4.39&4.60\\
Caputo   &1.75 & 2.02& 2.17  & 2.27 & 2.34 & 2.39& 2.42&2.44\\
         & 1.95& 1.63 & 1.68 & 1.71 & 1.73& 1.73 &1.74&1.74\\
\hline
\end{tabular}
\end{table}

\subsection{Numerical results for example (b)}
Here the source term $f$ is smooth but does not satisfy the zero
boundary condition. In the Riemann-Liouville case, the $L^2(D)$ and $H^1(D)$ errors
are respectively of order $O(h^{\alpha-1/2})$ and $O(h^{\alpha-3/2})$, while in the Caputo
case, an $O(h)$ and $O(h^2)$ rate is observed for $L^2(D)$ and $H^1(D)$ errors,
respectively, cf. Tables \ref{tab:interm-RL} and \ref{tab:interm-C}, which fully confirm our convergence theory.

\begin{table}[hbt!]
\caption{Numerical results for example (b) with a Riemann-Liouville derivative and $b=e^x$, $q=x(1-x)$, $\alpha=1.6, 1.75, 1.9$, $h=1/m$.}
\label{tab:interm-RL}
\vspace{-.3cm}
\begin{center}
     \begin{tabular}{|c|c|cccccc|c|}
     \hline
     $\alpha$  & $m$ &$10$ &$20$ &$40$ & $80$ & $160$ & $320$  &rate \\
     \hline
     $1.6$ & $L^2$ &6.87e-3 &2.89e-3 &1.28e-3 &5.78e-4 &2.65e-4 &1.22e-4 &$\approx$ 1.12 ($1.10$) \\

     & $H^1$   &3.28e-1 &2.79e-1 & 2.46e-1&  2.17e-1& 1.90e-1& 1.63e-1 &$\approx$ 0.20 ($0.10$)\\
    \hline
    $1.75$ & $L^2$    &2.93e-3 &1.05e-3 &4.04e-4 &1.62e-4 &6.65e-5 & 2.76e-5 & $\approx$ 1.28 ($1.25$)\\

     & $H^1$    & 1.17e-1 &8.47e-2 &6.59e-2 &5.29e-2 &4.28e-2 & 3.42e-2 &$\approx$ 0.31 ($0.25$)\\
     \hline
     $1.9$ & $L^2$   &1.37e-3 &3.82e-4 &1.12e-4 &3.54e-5 &1.19e-5 &4.23e-6 & $\approx$ 1.53 ($1.40$)\\

     & $H^1$   &5.16e-2 & 2.84e-2 &1.66e-2 &1.04e-2 &7.02e-3 &4.91e-3  &$\approx$ 0.54 ($0.40$)\\
     \hline
     \end{tabular}
\end{center}
\end{table}

\begin{table}[hbt!]
\caption{Numerical results for example (b) with a Caputo derivative and $b=e^x$, $q=x(1-x)$, $\alpha=1.6, 1.75, 1.9$, $h=1/m$.}
\label{tab:interm-C}
\vspace{-.3cm}
\begin{center}
     \begin{tabular}{|c|c|cccccc|c|}
     \hline
     $\alpha$  & $m$ &$10$ &$20$ &$40$ & $80$ & $160$ & $320$  &rate \\
     \hline
     $1.6$ & $L^2$ & 1.86e-3 &4.72e-4 &1.19e-4 &3.00e-5 &7.54e-6 &1.88e-6 &$\approx$ 2.00 ($2.00$) \\

     & $H^1$   &7.89e-2 &3.98e-2 &2.00e-2 &1.00e-2 &5.00e-3 &2.35e-3 &$\approx$ 1.03 ($1.00$)\\
    \hline
     $1.75$ & $L^2$    &1.28e-3 &3.22e-4 &8.03e-5 &2.01e-5 &5.01e-6 & 1.25e-6 & $\approx$ 2.00 ($2.00$)\\

     & $H^1$    & 5.38e-2 &2.71e-2 &1.35e-2 &6.78e-3 &3.39e-3 & 1.59e-3 &$\approx$ 1.03 ($1.00$)\\
     \hline
     $1.9$ & $L^2$   &1.07e-3 &2.68e-4 &6.71e-5 &1.68e-5 &4.19e-6 &1.04e-6 & $\approx$ 2.00 ($2.00$)\\

     & $H^1$    & 4.20e-2 &2.11e-2 &1.05e-2 &5.28e-3 &2.64e-3 & 1.24e-3 &$\approx$ 1.04 ($1.00$)\\
     \hline
     \end{tabular}
\end{center}
\end{table}

\subsection{Numerical results for example (c)}
Note that the source term $f(x)=x^{-1/4} \in \Hd {s}$ with $s\in[0,1/4)$. Hence, in the Caputo
case with $\alpha<1.75$, the solution $u$ fails to be in $H^2(D)$, which deteriorates the convergence rate. The $H^1(D)$
and $L^2(D)$-errors are of order $O(h^{0.85})$ and $O(h^{1.85})$ in case of $\alpha=1.6$, while for
$\alpha=1.75$ and $1.9$, an $O(h)$ and $O(h^2)$ rate of the $H^1(D)$ and $L^2(D)$-errors
is observed, cf. Table \ref{tab:nonsmooth-C}, confirming theoretical predictions. In the Riemann-Liouville case, the
desired optimal but slow convergence behavior is observed, cf. Table \ref{tab:nonsmooth-RL}.

\begin{table}[hbt!]
\caption{Numerical results for example (c) with the Riemann-Liouville derivative and $b=e^x$, $q=x(1-x)$, $\alpha=1.6, 1.75, 1.9$, $h=1/m$.}
\label{tab:nonsmooth-RL}
\vspace{-.3cm}
\begin{center}
     \begin{tabular}{|c|c|cccccc|c|}
     \hline
     $\alpha$  & $m$ &$10$ &$20$ &$40$ & $80$ & $160$ & $320$  &rate \\
     \hline
     $1.6$ & $L^2$ &1.07-2 &4.61e-3 &2.04e-3 &9.18e-4 &4.18e-4 &1.91e-4 &$\approx$ 1.13 ($1.10$) \\

     & $H^1$   &5.03e-1 &4.38e-1 & 3.87e-1&  3.42e-1& 2.99e-1& 2.57e-1 &$\approx$ 0.20 ($0.10$)\\
    \hline
     $1.75$ & $L^2$    &4.62e-3 &1.71e-3 &6.60e-4 &2.63e-4 &1.07e-4 & 4.40e-5 & $\approx$ 1.30 ($1.25$)\\

     & $H^1$    & 1.76e-1 &1.33e-1  &1.05e-1 &8.47e-2 &6.83e-2 &5.44e-2 &$\approx$ 0.32 ($0.25$)\\
     \hline
     $1.9$ & $L^2$   &2.02e-3 &5.93e-4 &1.82e-4 &5.87e-5 &1.99e-5 &6.99e-6 & $\approx$ 1.53 ($1.40$)\\

     & $H^1$   &7.05e-2 & 4.12e-2 &2.54e-2 &1.66e-2 &1.14e-2 &7.97e-3  &$\approx$ 0.52 ($0.40$)\\
     \hline
     \end{tabular}
\end{center}
\end{table}

\begin{table}[hbt!]
\caption{Numerical results for example (c) with a Caputo derivative and $b=e^x$, $q=x(1-x)$, $\alpha=1.6, 1.75, 1.9$, $h=1/m$.}
\label{tab:nonsmooth-C}
\vspace{-.3cm}
\begin{center}
     \begin{tabular}{|c|c|cccccc|c|}
     \hline
     $\alpha$  & $m$ &$10$ &$20$ &$40$ & $80$ & $160$ & $320$  &rate \\
     \hline
     $1.6$& $L^2$ & 1.84e-3 &4.92e-4 &1.31e-4 &3.51e-5 &9.46e-6 &2.56e-6 &$\approx$ 1.89 ($1.85$) \\

     & $H^1$   &7.47e-2 &3.87e-2 &2.01e-2 &1.05e-2 &5.54e-3 &2.82e-3 &$\approx$ 0.94 ($0.85$)\\
    \hline
     $1.75$ & $L^2$    &1.56e-3 &4.05e-4 &1.05e-4 &2.72e-5 &7.04e-6 & 1.81e-6 & $\approx$ 1.97 ($2.00$)\\

     & $H^1$    & 5.92e-2 &3.04e-2 &1.56e-2 &7.99e-3 &4.09e-3 & 1.98e-3 &$\approx$ 0.99 ($1.00$)\\
     \hline
     $1.9$ & $L^2$   &1.39e-3 &3.54e-4 &8.99e-5 &2.28e-5 &5.74e-6 &1.44e-6 & $\approx$ 1.99 ($2.00$)\\

     & $H^1$    & 5.02e-2 &2.55e-2 &1.29e-2 &6.49e-3 &3.27e-3 & 1.55e-3 &$\approx$ 1.03 ($1.00$)\\
     \hline
     \end{tabular}
\end{center}
\end{table}

\subsection{Numerical results for the enriched FEM}
\begin{table}[hbt!]
\caption{Numerical results for $u^r$ for example (b) with a Riemann-Liouville derivative,
by the enriched FEM and $b=1$, $q=x(1-x)$, $\alpha=1.6, 1.75, 1.9$, $h=1/m$.}
\label{tab:sing-recon-b-ur}
\vspace{-.3cm}
\begin{center}
     \begin{tabular}{|c|c|cccccc|c|}
     \hline
     $\alpha$  & $m$ &$10$ &$20$ &$40$ & $80$ & $160$ & $320$  &rate \\
     \hline
     $1.6$ & $L^2$ &5.51e-4 &1.40e-4 &3.66e-5 &1.01e-5 &9.18e-6 &2.87e-6 &$\approx$ 1.74 ($1.70$) \\

     & $H^1$   &2.04e-2 &1.05e-2 & 5.50e-3 &  2.99e-3& 1.70e-3& 1.01e-3 &$\approx$ 0.78 ($0.70$)\\
     \hline
     $1.75$ & $L^2$    &3.86e-4 &9.84e-5 &2.52e-5 &6.46e-6 &1.66e-6 & 4.30e-7 & $\approx$ 1.96 ($2.00$)\\

     & $H^1$    & 1.38e-2 &6.98e-3 &3.55e-3 &1.79e-3 &9.04e-4 & 4.51e-4 &$\approx$ 1.02 ($1.00$)\\
     \hline
     $1.9$ & $L^2$   &3.00e-4 &7.52e-5 &1.88e-5 &4.71e-6 &1.18e-6 &2.94e-7 & $\approx$ 2.00 ($2.00$)\\

     & $H^1$   &1.04e-2 &5.21e-3 &2.61e-3 &1.29e-3 &6.33e-4 & 3.06e-4  &$\approx$ 1.03 ($1.00$)\\
     \hline
     \end{tabular}
\end{center}
\end{table}

\begin{table}[hbt!]
\caption{$|\mu-\mu_h|$ for example (b) with a Riemann-Liouville derivative, by the enriched
FEM, and $b=1$, $q=x(1-x)$, $\alpha=1.6, 1.75, 1.9$, $h=1/m$.}
\label{tab:sing-recon-b-la}
\vspace{-.3cm}
\begin{center}
     \begin{tabular}{|c|cccccc|c|}
     \hline
     $\alpha$  &$10$ &$20$ &$40$ & $80$ & $160$ & $320$  &rate \\
     \hline
     $1.6$  &4.04e-4 &1.00e-4 &2.44e-5 &5.88e-6 &1.41e-6 &3.37e-7 &$\approx$ 2.07 ($1.70$) \\
     \hline
     $1.75$   &1.59e-4 &3.90e-5 &9.54e-6 &2.34e-6 &5.74e-7 & 1.41e-7 & $\approx$ 2.02 ($2.00$)\\
     \hline
     $1.9$&8.58e-5 &2.14e-5 &5.36e-6 &1.34e-6 &3.34e-7 & 8.33e-8 & $\approx$ 2.00 ($2.00$)\\
     \hline
     \end{tabular}
\end{center}
\end{table}

In Table \ref{tab:sing-recon-b-ur}, we present the $L^2(D)$ and $H^1(D)$ norms of the error in
approximating the regular part $u^r$ for example (b).
Since $f\in \Hd s$ with $s\in[0,1/2)$, by Theorem \ref{thm:regrl-new}, $u^r \in H^2(D)$ in
case of $\alpha > 1.75$. The numerical results show a convergence rate of $O(h^2)$ and $O(h)$ for the $L^2(D)$ and
$H^1(D)$-norms of the error, respectively, for $\alpha=1.75$ and $1.9$. For $\alpha=1.6$, the
regular part $u^r$ lies in $H^{1.2+\beta}(D)$ with $\beta\in[2-\alpha,1/2)$ by Theorem \ref{thm:regrl-new},
and we observe a convergence rate $O(h^{1.7})$ and $O(h^{0.7})$, respectively, in $L^2(D)$
and $H^1(D)$-norm, which fully confirms Theorem \ref{thm:femrl-singrecon}. The error $|\mu-\mu_h|$
of the reconstructed singular strength $\mu_h$ achieves an $O(h^2)$ convergence, even for $\alpha
=1.6$,
cf. Table  \ref{tab:sing-recon-b-la}.

\section{Conclusions}
In this work, we have developed novel variational formulations for fractional BVPs involving
a convection term. The fractional derivative in the leading term is of either Riemann-Liouville
or Caputo type. The well-posedness and sharp regularity pickup of the formulations are established.
A new finite element method, using continuous piecewise linear finite elements and ``shifted''
fractional powers for the trial and test space, respectively, was also developed. It leads to a
diagonal stiffness matrix for the leading term (on a uniform mesh), and admits optimal $L^2(D)$
and $H^1(D)$ error estimates, which is the first FEM with such desirable properties. Further,
an enriched FEM was proposed to improve the convergence in the Riemann-Liouville case, and
optimal error estimates were provided. Extensive numerical experiments fully confirm the
convergence analysis.

There are several avenues for further research. First, it is of immense interest to
extend the approach to higher dimensions. This extension is formally feasible for the
Riemann-Liouville case. However, their solution theory, e.g., well-posedness and sharp
regularity pickup, is missing. Second, it is also of much interest to extend the approach
to other type or inhomogeneous boundary conditions, which may induce much graver solution
singularity. Third, the adaptation of the approach to the time-dependent problems,
including the space-time fractional model, is important. Especially, it may allow one
to derive optimal $L^2(D)$ error estimates.

\section*{Acknowledgements}
The research of B. Jin has been partly supported by NSF Grant DMS-1319052 and UK EPSRC
EP/M025160/1, and R. Lazarov and Z. Zhou was supported in parts by NSF Grant DMS-1016525.

\bibliographystyle{abbrv}
\bibliography{fracrev}
\end{document}